\documentclass[draft, reqno]{amsart}
\usepackage[all]{xy}                        %

\CompileMatrices                            

\UseTips                                    

\input xypic
\usepackage[bookmarks=true]{hyperref}       

\usepackage{amssymb,latexsym,amsmath,amscd}
\usepackage{xspace}
\usepackage{enumerate}
\usepackage{graphicx}
\usepackage{vmargin}
\usepackage{todonotes}

\DeclareMathOperator{\red}{red}
\DeclareMathOperator{\Res}{Res}
\DeclareMathOperator{\reg}{reg}


\reversemarginpar

\vfuzz2pt 
\hfuzz2pt 


\theoremstyle{plain}
\newtheorem{theorem}{Theorem}[section]
\newtheorem*{theorem*}{Theorem}
\newtheorem{proposition}[theorem]{Proposition}
\newtheorem{corollary}[theorem]{Corollary}
\newtheorem{lemma}[theorem]{Lemma}

\theoremstyle{definition}

\newtheorem{notation}[theorem]{Notation}

\newtheorem{remark}[theorem]{Remark}

\newcommand{\enm}[1]{\ensuremath{#1}}          %

\newcommand{\cal}[1]{\mathcal{#1}}

\newcommand{\NN}{\enm{\mathbb{N}}}

\newcommand{\ZZ}{\enm{\mathbb{Z}}}

\newcommand{\PP}{\enm{\mathbb{P}}}

\newcommand{\KK}{\enm{\mathbb{K}}}

\newcommand{\Ff}{\enm{\cal{F}}}

\newcommand{\Ii}{\enm{\cal{I}}}

\newcommand{\Ll}{\enm{\cal{L}}}

\newcommand{\Oo}{\enm{\cal{O}}}

\newcommand{\Rr}{\enm{\cal{R}}}
\newcommand{\Ss}{\enm{\cal{S}}}

\newcommand{\Zz}{\enm{\cal{Z}}}

\renewcommand{\phi}{\varphi}
\renewcommand{\theta}{\vartheta}
\renewcommand{\epsilon}{\varepsilon}

\begin{document}

\title[tangential variety]{On the secant varieties of tangential varieties}
\author{Edoardo Ballico}
\address{Dept. of Mathematics\\
 University of Trento\\
38123 Povo (TN), Italy}
\email{ballico@science.unitn.it}
\thanks{The author was partially supported by MIUR and GNSAGA of INdAM (Italy).}
\subjclass[2010]{14N05} 
\keywords{tangential variety; secant variety; additive decompositions of homogeneous polynomials; defectivity; Segre-Veronese variety}

\begin{abstract}
Let $X\subset \PP^r$ be an integral and non-degenerate variety. Let $\sigma _{a,b}(X)\subseteq \PP^r$, $(a,b)\in \NN^2$, be the join of $a$ copies of $X$ and $b$ copies of the
tangential variety of $X$. Using the classical Alexander-Hirschowitz theorem (case $b=0$) and a recent paper by H. Abo and N. Vannieuwenhoven (case
$a=0$) we compute $\dim \sigma _{a,b}(X)$ in many cases when $X$ is the $d$-Veronese embedding of $\PP^n$. This is related to
certain additive decompositions of homogeneous polynomials. We give a general theorem proving that $\dim \sigma _{0,b}(X)$ is
the expected one when $X=Y\times \PP^1$ has a suitable Segre-Veronese style embedding in $\PP^r$. As a corollary we prove that
if $d_i\ge 3$, $1\le i \le n$, and $(d_1+1)(d_2+1)\ge 38$ the tangential variety of $(\PP^1)^n$ embedded by $|\Oo _{(\PP
^1)^n}(d_1,\dots ,d_n)|$ is not defective and a similar statement for $\PP^n\times \PP^1$. For an arbitrary $X$ and an ample
line bundle $L$ on $X$ we prove the existence of an integer $k_0$ such that for all $t\ge k_0$ the tangential variety of $X$ with
respect to $|L^{\otimes t}|$ is not defective.
\end{abstract}

\maketitle

\section{Introduction}

For any integral and non-degenerate $n$-dimensional projective variety $X\subset \PP^r$ defined over an algebraically closed
field
$\KK$ with characteristic $0$ and any integer $a>0$ the $a$-th secant variety $\sigma _a(X)$ of $X$ is the closure in $\PP^r$
of the union of all linear subspaces of $\PP^r$ spanned by $a$ points of $X$. Its expected dimension is $\min \{r,a(n+1)-1\}$
and by Terracini's lemma (\cite[Corollary 1.11]{a}) we may check if $\sigma _a(X)$ has the expected dimension using the integer
$h^0(\Ii _Z(1))$, where $Z\subset X$ is a certain zero-dimensional scheme. If each $\sigma _a(X)$ has the expected dimension
we say that $X$ is \emph{not defective}. The tangential variety
$\tau (X)\subseteq X$ is the closure in $\PP^r$ of the union of all $T_qX$, $q\in X_{\mathrm{reg}}$, where $T_qX\subseteq
\PP^r$ denote the Zariski tangent space of $X$ at $q$. The variety $\tau (X)$ has dimension at most $2n$ and equality
holds if and only if a general tangent space of $X$ is tangent only at finitely many points of $X$. A key observation made in
\cite{cgg} (and then used several times, e.g. in \cite{bcgi}) is that to give
$\dim
\sigma _b(\tau (X))$ it is sufficient to compute $h^0(\Ii _{Z_{0,b}}(1))$, where $Z_{0,b}$ is a certain zero-dimensional scheme with $\deg (Z_{0,b}) =b(2n+1)$. Any scheme $Z_{0,1}\subset X_{\reg}$ is called a \emph{tangential scheme}. Using
this idea H. Abo and N.  Vannieuwenhoven proved a conjectural result of \cite{bcgi}: the description of all integers $\dim
\sigma _b(\tau (X))$,
where $X\subset \PP^r$, $r =-1+\binom{n+d}{n}$, is the $d$-Veronese embedding of $\PP^n$. As explained in \cite{av, bcgi, cgg}
this result has an interpretation in terms of the existence of a certain additive decomposition of a general homogeneous
degree $d$ polynomial in
$n+1$ variables. For any $(a,b)\in \NN^2\setminus \{(0,0)\}$ let $\sigma _{a,b}(X)$ denote the join of $X$ copies of $a$ and $b$ copies of $\tau (X)$ (\cite{a}). The interpretation of $\dim \tau (X)$ as the integer $r+1-h^0(\Ii _{Z_{0,1}}(1))$, where $Z_{0,1}$ is a certain zero-dimensional subscheme of $X$ with $\deg (Z_{0,1})=2n+1$, and the Terracini's lemma for joins (\cite[Corollary 1.11]{a}) give
$\dim \sigma _{a,b}(X) = r-h^0(\Ii _{Z_{a,b}}(1))$, where $Z_{a,b}\subset X$ is a certain zero-dimensional scheme. Since $Z_{a,b}$ is zero-dimensional and $\deg (Z_{a,b}) =a(n+1)+(2n+1)b$, we have $h^0(\Ii _{Z_{a,b}}(1)) = r+1-a(n+1)-b(2n+1)+h^1(\Ii _{Z_{a,b}}(1))$. It is well-known that the integer  $\dim \sigma _{a,b}(X)$
has an interpretation in terms of another related additive decomposition of
homogeneous degree $d$ polynomials. In particular $\sigma _{a,b}(X) =\PP^r$ if and only if for a general degree
$d$ homogeneous polynomial $f$ there are $a+2b$ homogeneous linear forms $\ell _1,\dots ,\ell _{a+2b}$ such that
\begin{equation}\label{eqiii1} f = \sum _{i=1}^{a}\ell _i^d + \sum _{i=1}^{b} \ell _{i+a}^{d-1} \ell _{i+a+b}.
\end{equation}
Concerning these decompositions  we prove the following result.

\begin{theorem}\label{uu1}
Fix integers $n\ge 2$, $t\ge 3$. Let $X\subset \PP^r$, $r= \binom{n+t}{n}-1$, denote the order $t$ Veronese embedding of $\PP^n$. Fix $(a,b)\in \NN^2$ such that either $a\ge \lceil \binom{n+3}{3}/(n+1)\rceil$ or $b\ge
\lceil \binom{n+3}{3}/(2n+1)\rceil$. Then either
$h^1(\Ii _{Z_{a,b}}(t)) =0$ or $h^0(\Ii _{Z_{a,b}}(t)) =0$, except in the cases $$(n,t,a,b)
\in \{(2,4,5,0),(2,3,0,2),(3,3,3,0),(3,4,9,0),(3,4,7,1),(4,3,7,0),(4,4,9,0)\}.$$
\end{theorem}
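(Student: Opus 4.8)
The plan is to work throughout with the cohomological reformulation recorded above. Since $h^0(\Ii_{Z_{a,b}}(t))-h^1(\Ii_{Z_{a,b}}(t)) = r+1-\deg(Z_{a,b})$ with $\deg(Z_{a,b}) = a(n+1)+b(2n+1)$, the desired dichotomy ``$h^1=0$ or $h^0=0$'' is exactly the assertion that $Z_{a,b}$ is non-defective: it imposes independent conditions in degree $t$ when $\deg(Z_{a,b})\le r+1$ and it spans when $\deg(Z_{a,b})\ge r+1$. I would treat the two hypotheses $a\ge\lceil\binom{n+3}{3}/(n+1)\rceil$ and $b\ge\lceil\binom{n+3}{3}/(2n+1)\rceil$ as two symmetric cases, using the Alexander--Hirschowitz theorem in the first and the Abo--Vannieuwenhoven theorem in the second.

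The base of the induction is $t=3$, and here the two thresholds do all the work. Set $a_0=\lceil\binom{n+3}{3}/(n+1)\rceil$ and $b_0=\lceil\binom{n+3}{3}/(2n+1)\rceil$, so that at $t=3$ one has $a_0(n+1)\ge\binom{n+3}{3}=r+1$ and $b_0(2n+1)\ge r+1$. In the case $a\ge a_0$ I would use the containment $Z_{a_0,0}\subseteq Z_{a,0}\subseteq Z_{a,b}$ coming from a general union of $a_0$ double points: Alexander--Hirschowitz gives $h^0(\Ii_{Z_{a_0,0}}(3))=0$ outside its list of defective cubic systems, and since the vanishing of $h^0$ passes from a subscheme to any superscheme (the restriction map stays injective), this forces $h^0(\Ii_{Z_{a,b}}(3))=0$. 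The case $b\ge b_0$ is identical with Abo--Vannieuwenhoven and $Z_{0,b_0}\subseteq Z_{a,b}$. The finitely many quadruples for which the relevant \emph{pure} system is genuinely Alexander--Hirschowitz- or Abo--Vannieuwenhoven-defective are precisely what produce the exceptional list; for instance the defective $\sigma_7$ of the cubic Veronese of $\PP^4$ yields $(4,3,7,0)$, and the defective $\sigma_2$ of the plane cubic tangential variety yields $(2,3,0,2)$.

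For $t\ge 4$ I would run the Horace differential method, inducting on $t$ with an inner induction on $n$ (the base $n=1$ being the classically non-defective rational normal curve). Fix a hyperplane $H\cong\PP^{n-1}$ and specialise a carefully chosen number of the $a$ double points and $b$ tangential schemes so that their supports lie on $H$, orienting the tangent directions of the specialised tangential schemes so that each has a prescribed part tangent to $H$ and a prescribed part transverse. The Castelnuovo exact sequence
\[ 0 \to \Ii_{\Res_H(Z_{a,b})}(t-1) \to \Ii_{Z_{a,b}}(t) \to \Ii_{\op{Tr}_H(Z_{a,b}),H}(t) \to 0 \]
then bounds both $h^0$ and $h^1$ of $\Ii_{Z_{a,b}}(t)$ by the corresponding invariants of the residue $\Res_H(Z_{a,b})$, a union of double points and tangential schemes in $\PP^n$ in degree $t-1$, and of the trace $\op{Tr}_H(Z_{a,b})$, a similar union in $\PP^{n-1}$ in degree $t$. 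Applying the differential Horace lemma componentwise describes how a degree-$(2n+1)$ tangential scheme breaks into its trace and residue, and the point of the numerical choice is to arrange that the residue (by the inductive hypothesis in degree $t-1$) and the trace (by the inductive hypothesis in dimension $n-1$, with pure pieces controlled directly by Alexander--Hirschowitz and Abo--Vannieuwenhoven) both have the required vanishing of $h^0$ or $h^1$ dictated by their degrees.

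The main obstacle is this balancing step together with the local analysis it rests on. One must exhibit an admissible distribution of the double points and tangential schemes between $H$ and its complement for which the residual and trace degrees both land in the non-defective range, and this is exactly where the hypothesis that $a$ or $b$ exceeds the degree-$3$ threshold supplies the decisive slack, guaranteeing enough pure components of one type to absorb the discrepancies created by specialisation. The delicate technical input is the precise behaviour of the tangential scheme $Z_{0,1}$ under intersection with and projection off $H$---in particular the computation of $\op{Tr}_H(Z_{0,1})$ and $\Res_H(Z_{0,1})$ for each allowed tangent orientation---together with the separate, finite verification of the low cases $t=4$ and small $n$, where the induction carries no slack; these last checks are where the remaining members of the exceptional list are confirmed.
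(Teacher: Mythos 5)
Your outline follows the same route as the paper: the $t=3$ case is disposed of exactly as you say (the thresholds force $\deg(Z_{a_0,0})\ge\binom{n+3}{3}$ resp.\ $\deg(Z_{0,b_0})\ge\binom{n+3}{3}$, and the vanishing of $h^0$ for a pure subscheme propagates to $Z_{a,b}$), and for $t\ge 4$ the paper likewise runs the differential Horace method with respect to a hyperplane $H$, with a double induction on $n$ and $t$. But the proposal stops at the point where the proof actually begins. The ``balancing step'' you defer is not a routine verification: one must exhibit integers $x,y$ with $(2n-1)x+2y=\binom{n+t-1}{n-1}$, $0\le y\le 2n-2$, decide according to whether $b\ge x+y$, $x\le b<x+y$ or $b<x$ how many components of each type $(2n-1,2)$, $(2,2n-1)$, $(n,1)$, $(1,n)$ to place on $H$, and then check (i) that the trace on $H$ satisfies the threshold hypothesis of the theorem in $\PP^{n-1}$ (in the paper this requires ruling out that both $(2n+1)(b-x)<\binom{n+2}{3}$ and $(n+1)u_1<\binom{n+2}{3}$ hold, an inequality that genuinely fails for small $n$ --- which is why $n=4$ and $n=5$, $t\le 6$, are done by hand before the clean induction starts at $n\ge 6$), and (ii) that the residue satisfies not only an $h^1$-vanishing in degree $t-1$ but also an $h^0$-bound in degree $t-2$, because discarding the tangent vectors $\Res_H(W)$ via the Ciliberto--Miranda step (Lemma \ref{a3}) costs a condition two degrees down. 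Your appeal to ``decisive slack'' from the degree-$3$ thresholds does not by itself produce these inequalities.

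A second concrete error: the exceptional quadruples are \emph{not} precisely those for which the pure system is Alexander--Hirschowitz- or Abo--Vannieuwenhoven-defective. The case $(3,4,7,1)$ has $ab\ne 0$; it is not visible from either pure classification and must be established directly, by showing $|\Ii_{Z_{7,1}}(4)|=\{2Q\}$ for the unique quadric $Q$ through the seven points and the tangent vector, which in turn rests on a computation on $Q\cong\PP^1\times\PP^1$ (the paper's Lemma \ref{oo2}, $h^0(\Ii_{Z_{7,1}}(4,4))=0$, via Laface's results on rational scrolls). Relatedly, the base cases $n=2,3$ are not just ``confirmations'': they are complete classifications (Propositions \ref{ux2} and \ref{ux3}) whose proofs must control what happens when $\Res_H$ lands on an exceptional configuration in degree $t-1$ or $t-2$ --- a failure mode your induction scheme never addresses, and the reason the surface and threefold cases carry their own multi-step case analysis rather than following from a uniform argument.
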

For the quadruples $(n,a,b,t)$ for which Theorem \ref{uu1} may be applied, it gives that a general homogeneous polynomial $f$ has a
decomposition (\ref{eqiii1}) for some $\ell _1,\dots ,\ell _{a+2b}$ (case $h^0(\Ii _{Z_{a,b}}(t))=0$), or the dimension of the
set of the polynomials $f$ in (\ref{eqiii1}) for some $\ell _1,\dots ,\ell _{a+2b}$ (case $h^1(\Ii _{Z_{a,b}}(t))=0$).

For $n=2,3,4$ we give all integers $\dim \sigma _{a,b}(\PP^n)$ (Propositions \ref{ux2} and \ref{ux3}).

The exceptional cases with either $a=0$ or $b=0$ in the list of Theorem \ref{uu1} have a well-known geometric interpretation.
The only case in this list with $ab\ne 0$ has an obvious geometric interpretation. Indeed, in $\PP^3$ we have $|\Ii _{Z_{7,1}}(4)|
=\{2Q\}$, where $Q$ is the only quadric containing the $7$ points which are the reduction of the $7$ degree $4$ connected
components of $Z_{7,1}$ and the tangent vector used to define the degree
$7$ connected component of
$Z_{7,1}$ (Proposition \ref{ux3}).

For more about additive decompositions of homogeneous polynomials and their use in applied mathematics, see \cite{ik, l} and
the introduction of \cite{av}.

\begin{theorem}\label{i2}
Let $Y$ be an integral projective variety, $\Ll \in \mathrm{Pic}(X)$ and $V\subseteq H^0(\Ll)$ a linear subspace inducing a
rational map $Y \dasharrow \PP^{\alpha -1}$, $\alpha := \dim V$, with image $Y'$ of dimension $n-1 = \dim Y$. Assume that
the tangential variety of $Y'$ is not defective.  Let $C$ be an integral projective curve, $\Rr\in \mathrm{Pic}({C})$ and
$W\subseteq H^0(\Rr)$ a linear subspace with $\dim W \ge 4$. Set $n:= 1+\dim Y$
$\alpha:=
\dim V$ and assume $\alpha \ge 4n^2  +2n -4$. Set
$X:= Y\times C$. Then the tangential variety of
$X$ with respect to
$V\boxtimes W$ is not defective.
\end{theorem}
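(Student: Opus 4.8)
The plan is to translate the statement, via Terracini's lemma and the reformulation recalled in the introduction, into a cohomological vanishing for a general union of tangential schemes on $X=Y\times C$, and then to prove that vanishing by a Castelnuovo (Horace) induction along the fibers of the projection $X\to C$, feeding the traces into the hypothesis that $\tau (Y')$ is not defective.

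\textbf{Reformulation and reduction to a critical $b$.} Set $L:=\Ll\boxtimes\Rr$ and identify the embedding system with $V\boxtimes W$, so that the ambient space has dimension $\alpha w$ with $w:=\dim W\ge 4$. As in the introduction, $\tau (X)$ is not defective if and only if for every $b$ a general $Z_{0,b}$ (a disjoint union of $b$ degree-$(2n+1)$ tangential schemes, at $b$ general points with general tangent directions) imposes on $V\boxtimes W$ the expected number $\min(\alpha w, b(2n+1))$ of conditions. Since passing to a subscheme can only decrease $h^1$ (independence of conditions) and increase $h^0$ (failure to fill), it suffices to treat the two critical values $\bar b:=\lfloor \alpha w/(2n+1)\rfloor$ (prove independence) and $\lceil \alpha w/(2n+1)\rceil$ (prove filling); and since $h^0,h^1$ are upper semicontinuous it is enough to produce, for each, one special configuration with the desired vanishing.

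\textbf{The Horace step along a fiber.} Fix a general $c_0\in C$ and let $D:=Y\times\{c_0\}$, so $L(-D)=\Ll\boxtimes\Rr(-c_0)$ and $L|_D=\Ll$. The residual exact sequence
\[ 0\to \Ii_{\Res_D Z}\otimes L(-D)\to \Ii_Z\otimes L\to \Ii_{Z\cap D,D}\otimes \Ll\to 0 \]
reduces the vanishing for $Z$ to one on $D\cong Y$ for the system $V$ and one on $X$ for the system with $\Rr$ replaced by $\Rr(-c_0)$ (so $\dim W$ drops by one). I would specialize $k$ of the tangential schemes so that their support lies on $D$ and their tangent direction is tangent to $D$ (a pure-$Y$ direction). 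A local computation then shows that for such a scheme the trace $Z\cap D$ is exactly a tangential scheme of $Y$ of degree $2(n-1)+1=2n-1$, while the residual $\Res_D Z$ is a length-$2$ tangent vector; the remaining $b-k$ schemes are kept general and disjoint from $D$, so their trace is empty and their residual is again a degree-$(2n+1)$ tangential scheme. The count is exact: $k(2n-1)+\big(2k+(b-k)(2n+1)\big)=b(2n+1)$.

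\textbf{Induction and the role of the hypotheses.} Since $Y\dasharrow Y'$ is generically \'etale (characteristic $0$), a general tangential scheme of $Y$ maps to a general tangential scheme of $Y'$, so the hypothesis that $\tau (Y')$ is not defective controls the trace: the $k$ tangential schemes of $Y'$ impose independent conditions on $V$ as long as $k(2n-1)\le\alpha$, and fill $V$ once $k(2n-1)\ge\alpha$. I would then induct on $w$, choosing $k\approx\alpha/(2n-1)$ at each step so that $D$ is used optimally, and carrying the accumulated degree-$2$ tangent vectors down to the next fiber, where they have empty trace and pass once more to the residual. The inductive statement must therefore be enlarged to allow general unions of tangential schemes \emph{and} tangent vectors, the exact additivity above showing that the expected dimension is preserved at each step. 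The numerical hypotheses are exactly what make the counts close: $\dim W\ge 4$ guarantees that the curve factor carries enough (second-order osculating) sections for the fiberwise splitting and for the base case, while $\alpha\ge 4n^2+2n-4=(2n-1)(2n+2)-2$ guarantees that $Y'$ always has room to absorb the required tangential schemes together with the leftover tangent vectors in general position, including the remainder terms coming from the floors in the first step.

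\textbf{Main obstacle.} I expect the technical heart to be the control of \emph{general position} in the preceding two steps rather than the dimension count, which is exact by construction. One must verify that the specialized configuration (points on $D$ with pure-$Y$ directions, together with the tangent vectors accumulating at general points of successive fibers) is still general enough that trace and residual each impose independent conditions; this is where the Horace differential lemma and a careful semicontinuity argument are required, and where the exceptional small cases (in the spirit of the exceptions in Theorem \ref{uu1}) are excluded precisely by $\dim W\ge 4$ and $\alpha\ge 4n^2+2n-4$. The favorable feature that makes the induction terminate is that the residual is only a length-$2$ scheme: the heavy part $2n-1$ of every tangential scheme is discharged onto $Y'$, where the hypothesis applies, and only a negligible length-$2$ tail survives.
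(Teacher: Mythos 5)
Your overall strategy is the one the paper uses: induct along the fiber $D = Y\times\{o\}$, specialize tangential schemes of $X$ so that their trace on $D$ is a tangential scheme of $Y$ of degree $2n-1$ and their residual is a length-$2$ tangent vector, and feed the traces to the hypothesis that $\tau(Y')$ is not defective. But there is a genuine arithmetic gap in the middle of your argument. For the Horace step to close at the critical value of $b$ you must make $\deg (Z\cap D)$ equal (or very close) to $\alpha=\dim V$, and with only specializations of type $(2n-1,2)$ the trace degree is $k(2n-1)$, which misses $\alpha$ by a remainder $\mu$ with $0\le \mu\le 2n-2$ whenever $2n-1$ does not divide $\alpha$. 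The paper fixes this by writing $\alpha=(2n-1)f_1+2e_1$ with $0\le e_1\le 2n-2$ (possible because $2n-1$ is odd) and inserting $e_1$ \emph{virtual} schemes of type $(2,2n-1)$ via the differential Horace lemma: their trace is a tangent vector of $D$ (degree $2$) and their residual is a tangential scheme of $D$ (degree $2n-1$). You mention the differential Horace lemma only at the end, as a device for checking general position; in fact it is what makes the degree count close, and omitting it from the construction leaves the induction unable to fill the divisor exactly.

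Second, your bookkeeping of the length-$2$ residuals is wrong as stated. Those tangent vectors are supported on $D$; if, as you say, they have empty trace at the next fiber and pass once more to the residual, they are never imposed on anything and accumulate: over $\dim W-1$ steps they contribute roughly $2(\dim W-1)\alpha/(2n-1)$ of degree that the final factor $V$ (of dimension $\alpha$) cannot absorb, so the final count fails. In the paper they are absorbed (or discarded via Lemma \ref{a3}) precisely because they lie \emph{in} $D$ and the same divisor is used at every step, so at the next step they enter the trace, where the slack between $\alpha$ and the new trace degree pays for them; this is where the hypothesis $\alpha\ge 4n^2+2n-4$ is actually consumed (it forces $f_1\ge e_1+2$, i.e., enough slack at every step). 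Relatedly, the correct enlarged inductive statement is not ``tangential schemes and tangent vectors'' but ``tangential schemes of $X$ together with up to $e_1$ tangential schemes of $D$'' (the residuals of the virtual schemes), which is what Lemmas \ref{ny1}, \ref{y4} and \ref{y5} carry through the induction; and the base cases $V[0],\dots ,V[3]$ (this is why $\dim W\ge 4$ is required) must be established by hand before the induction on $t$ can start.
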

We explain with full details in Remark \ref{ff0} the statement of Theorem \ref{i2}. The reader may take $Y\subset \PP (V)$, $C\subset \PP (W)$ and then $X$ embedded in $\PP (V\otimes W)$ by the Segre embedding of $\PP (V)\times \PP (W)$ in $\PP (V\otimes W)$.

As an immediate corollary of \cite{co} and Theorem \ref{i2} obtained taking $Y=(\PP^1)^{n-1}$,
$V:= H^0(\Oo_{(\PP^1)^{n-1}}(d_1,\dots ,d_{n-1}))$,
$C =\PP^1$ and $W:= H^0(\Oo_{\PP^1}(d_n))$ we get the following result.

\begin{corollary}\label{i2.0}
Fix integers $n\ge 2$ and $d_i>0$, $1\le i \le n$, such that $d_i\ge 3$ for all $i\ge 3$ and $(d_1+1)(d_2+1)\ge 38$. Then
the tangential variety of $(\PP^1)^n$ with respect to $H^0(\Oo _{(\PP^1)^n}(d_1,\dots ,d_n))$ is not defective.
\end{corollary}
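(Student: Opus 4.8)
The plan is to prove the statement by induction on the number $n$ of factors, using \cite{co} as the base case and Theorem \ref{i2} for the inductive step. The key structural observation is that we only ever peel off factors with $d_i\ge 3$, so the two possibly small degrees $d_1,d_2$ are handled once and for all by \cite{co}.

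For the base case $n=2$ the variety $(\PP^1)^2$ is a surface embedded by $|\Oo(d_1,d_2)|$, and under the hypothesis $(d_1+1)(d_2+1)\ge 38$ (with $d_1,d_2>0$) the non-defectivity of its tangential variety is exactly the result of \cite{co}. Note that this step does not require $d_1$ or $d_2$ to be at least $3$, which is essential: for $n=2$ one cannot in general apply Theorem \ref{i2}, since peeling off a $\PP^1$ factor would require $\dim H^0(\Oo_{\PP^1}(d_2))=d_2+1\ge 4$, and $d_2$ may be small.

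For the inductive step $n\ge 3$ I would apply Theorem \ref{i2} with $Y=(\PP^1)^{n-1}$, $\Ll=\Oo_{(\PP^1)^{n-1}}(d_1,\dots,d_{n-1})$, $V=H^0(\Ll)$, so that $Y'$ is the Segre--Veronese image of $(\PP^1)^{n-1}$ and $\alpha=\dim V=\prod_{i=1}^{n-1}(d_i+1)$, together with $C=\PP^1$, $\Rr=\Oo_{\PP^1}(d_n)$, $W=H^0(\Rr)$. Then $X=Y\times C=(\PP^1)^n$ and $V\boxtimes W=H^0(\Oo_{(\PP^1)^n}(d_1,\dots,d_n))$, so the output of Theorem \ref{i2} is precisely the desired conclusion. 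Three hypotheses then need checking: (i) $\dim W=d_n+1\ge 4$, which holds because $n\ge 3$ forces $d_n\ge 3$; (ii) the tangential variety of $Y'$ is non-defective, which is the inductive hypothesis for $n-1$ factors, valid since $(d_1+1)(d_2+1)\ge 38$ and $d_i\ge 3$ for $3\le i\le n-1$; and (iii) the dimension bound $\alpha\ge 4n^2+2n-4$.

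The single genuine computation is (iii), and this is where I expect the effort to lie, though it is routine bookkeeping rather than a real obstacle. From $(d_1+1)(d_2+1)\ge 38$ and $d_i+1\ge 4$ for $3\le i\le n-1$ one gets $\alpha\ge 38\cdot 4^{\,n-3}$, and a short induction gives $38\cdot 4^{\,n-3}\ge 4n^2+2n-4$ for every $n\ge 3$: there is equality at $n=3$ (as $4\cdot 9+6-4=38$), and the inductive step reduces to $6n^2-n-9\ge 0$, which holds for $n\ge 3$. With (i)--(iii) in hand, Theorem \ref{i2} yields that $\tau((\PP^1)^n)$ is not defective, closing the induction. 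In short, all the real work is carried by \cite{co} and Theorem \ref{i2}; the corollary is ``immediate'' because the exponential growth of $\alpha$ trivially dominates the polynomial threshold $4n^2+2n-4$.
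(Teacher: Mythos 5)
Your proof is correct and follows exactly the paper's route: base case $n=2$ from \cite{co}, inductive step via Theorem \ref{i2} with $Y=(\PP^1)^{n-1}$ and $C=\PP^1$. The paper's own proof is a one-liner that omits the verification of $\alpha\ge 4n^2+2n-4$; your check that $(d_1+1)(d_2+1)\prod_{i=3}^{n-1}(d_i+1)\ge 38\cdot 4^{n-3}\ge 4n^2+2n-4$ (with equality at $n=3$) is the right bookkeeping and is accurate.
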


See \cite{ab, abre, lp} for the dimensions of the secant varieties of Segre-Veronese varieties.

Theorem \ref{i2} has also the following immediate consequence for $\PP^m\times \PP^1$.

\begin{corollary}\label{i2.1}
Fix integers $m\ge 2$, $d\ge 3$ and $t\ge 3$ such that $(t+1)\binom{m+d}{m} \ge 4m^2+10m+2$; if $m\le 4$ assume $d\ne 3$. Then the
tangential variety of the embedding of
$\PP^m\times
\PP^1$ by the complete linear system $|\Oo _{\PP^m\times \PP^1}(d,t)|$ is not defective.
\end{corollary}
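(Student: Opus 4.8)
The plan is to read the statement off Theorem~\ref{i2} by taking $Y=\PP^m$ and $C=\PP^1$ and using complete linear systems on each factor. Concretely, I would set $\Ll=\Oo_{\PP^m}(d)$ and $V=H^0(\Ll)$, so that the induced map is the $d$-uple Veronese embedding, its image $Y'=v_d(\PP^m)$ has dimension $m=n-1$ with $n=m+1$, and $\alpha=\dim V=\binom{m+d}{m}$; and I would set $\Rr=\Oo_{\PP^1}(t)$ and $W=H^0(\Rr)$, so that $\dim W=t+1$. By the K\"unneth formula $V\boxtimes W=H^0(\Oo_{\PP^m\times\PP^1}(d,t))$, hence the embedding of $X=Y\times C=\PP^m\times\PP^1$ attached to $V\boxtimes W$ is exactly the one given by the complete linear system $|\Oo_{\PP^m\times\PP^1}(d,t)|$. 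It therefore suffices to verify the three hypotheses of Theorem~\ref{i2} for these data.

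The condition $\dim W\ge 4$ is immediate from $t\ge 3$. The essential hypothesis is the non-defectivity of $\tau(Y')=\tau(v_d(\PP^m))$, for which I would invoke the classification of the secant varieties of tangential varieties of Veronese varieties recalled in the Introduction, i.e. the theorem of Abo and Vannieuwenhoven. Its defective cases with $d\ge 3$ form a short finite list supported on small $(m,d)$ with $d=3$; the hypotheses $d\ge 3$ together with the exclusion of $d=3$ for $m\le 4$ are imposed precisely to keep $(m,d)$ off that list, while for $m\ge 5$ the bound $d\ge 3$ already suffices.

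It remains to confront the numerical bound $\alpha\ge 4n^2+2n-4$ of Theorem~\ref{i2}, which with $n=m+1$ becomes $\binom{m+d}{m}\ge 4m^2+10m+2$. The point requiring care is the interplay with the $\PP^1$-factor: the ambient space of the embedding is $\PP(V\boxtimes W)$ of dimension $(t+1)\binom{m+d}{m}-1$, so the room available for a non-defective tangential variety is governed by the product $(t+1)\binom{m+d}{m}$, which is why the stated hypothesis takes the form $(t+1)\binom{m+d}{m}\ge 4m^2+10m+2$. I expect this matching---and, with it, the verification that the arithmetic conditions on $(m,d,t)$ simultaneously place $(m,d)$ outside the finite defective list and meet the dimension bound---to be the main obstacle; once it is in place, Theorem~\ref{i2} delivers the non-defectivity of $\tau(X)$.
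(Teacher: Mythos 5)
Your setup is exactly the paper's: the printed proof of Corollary \ref{i2.1} is a single sentence invoking \cite{av} (for the non-defectivity hypothesis on $\tau(v_d(\PP^m))$, which is why $d=3$ is excluded for $m\le 4$) together with Theorem \ref{i2} applied to $Y=\PP^m$, $V=H^0(\Oo_{\PP^m}(d))$, $C=\PP^1$, $W=H^0(\Oo_{\PP^1}(t))$. Your identification of $n=m+1$, $\alpha=\binom{m+d}{m}$, $\dim W=t+1\ge 4$, and of the role of the Abo--Vannieuwenhoven classification is all correct and matches what the author does.

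The point you flag as ``the main obstacle'' is, however, a genuine gap, and your heuristic does not close it. Theorem \ref{i2} as stated requires $\alpha\ge 4n^2+2n-4$, i.e.\ $\binom{m+d}{m}\ge 4m^2+10m+2$, with \emph{no} factor of $t+1$: the bound enters through the base case $t=3$ of the induction (Lemma \ref{y3}), where it is a condition on $\dim V$ alone and is independent of $\dim W$. The corollary's hypothesis $(t+1)\binom{m+d}{m}\ge 4m^2+10m+2$ is strictly weaker, and the two really differ on admissible inputs: for $(m,d)=(2,4)$ one has $\binom{6}{2}=15<38=4m^2+10m+2$, so the literal hypothesis of Theorem \ref{i2} fails, while $(t+1)\cdot 15\ge 60\ge 38$ for every $t\ge 3$, so the corollary claims these cases. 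Your argument that ``the room available is governed by the product $(t+1)\binom{m+d}{m}$'' describes the ambient dimension $\dim\PP(V\boxtimes W)$ but does not interact with how Theorem \ref{i2} is proved, so it cannot substitute for the missing verification. (The paper's own one-line proof silently writes $(t+1)\binom{m+d}{m}$ where the theorem's hypothesis has $\binom{m+d}{m}$, so it shares this defect.) As it stands, your deduction only yields the corollary under the stronger hypothesis $\binom{m+d}{m}\ge 4m^2+10m+2$; to get the statement as printed one would need to redo the base case of Theorem \ref{i2} so that the numerical requirement scales with $\dim V[3]=4\alpha$ (equivalently, with $\dim W$), which neither you nor the paper supplies.
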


For the cases $m \le 4$, $t=3$ and $d=1,2,3$, see Propositions \ref{x1} and \ref{c4}.

\begin{remark}\label{nni2.3}
Take $X:= \PP^m\times (\PP^1)^{n-m}$, $n>m>0$, embedded by the complete linear system $|\Oo _X(d_1,\dots, d_{n-m+1})|$ with
$d_i\ge 3$ for all $i$ and $\binom{m+d_1}{m} \ge 4m^2+10m+2$. By induction on $n-m$ we may apply Theorem \ref{i2} to $X$ and
get that the tangential variety of $X$ is not defective.
\end{remark}

We also prove the following asymptotic result which is similar to the very general \cite{ah2} and whose proof uses their
differential Horace lemma.

\begin{theorem}\label{i1}
Fix an integral projective variety $X$, line bundles $M, L$ on $X$ with $L$ ample and a zero-dimensional scheme $W\subset X$. There is an integer $k_0$ such that for each integer $k\ge k_0$ and all $(a,b)\in \NN^2$,
either $h^0(\Ii _{Z_{a,b}}\otimes M\otimes L^{\otimes k}) =0$ or $h^1(\Ii _{Z_{a,b}}\otimes M\otimes L^{\otimes k}) =0$, where
$Z_{a,b}\subset X$ is a general union of $a$ 2-points and $b$ tangential schemes. 
\end{theorem}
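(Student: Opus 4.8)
The plan is to mimic the asymptotic vanishing argument of Alexander and Hirschowitz \cite{ah2}, replacing their fat points by $2$-points and tangential schemes and carrying a fixed zero-dimensional scheme through the induction; the scheme $W$ in the statement is exactly such a fixed scheme, and the assertion to be proved is the vanishing of $h^{0}\cdot h^{1}$ for $W\cup Z_{a,b}$. Write $\Mm := M\otimes L^{\otimes k}$. Since $L$ is ample, for $k\gg 0$ Serre vanishing gives $h^{i}(\Mm)=0$ for $i>0$, the function $k\mapsto h^{0}(\Mm)$ is a polynomial of degree $n=\dim X$, and $\Mm$ is $s$-jet ample for any fixed $s$, so in particular it separates the infinitesimal data defining every $2$-point and every tangential scheme. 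In this language the conclusion is that the general $W\cup Z_{a,b}$ imposes $\min\{\deg(W\cup Z_{a,b}),\,h^{0}(\Mm)\}$ conditions on $|\Mm|$, i.e. the dichotomy $h^{0}(\Ii_{W\cup Z_{a,b}}\otimes\Mm)\cdot h^{1}(\Ii_{W\cup Z_{a,b}}\otimes\Mm)=0$. By semicontinuity and the inequality $|h^{0}(\Ii_{Z\cup Z'}\otimes\Mm)-h^{0}(\Ii_{Z}\otimes\Mm)|\le\deg Z'$, it suffices to establish the dichotomy for the finitely many \emph{critical} pairs, those with $\deg(W\cup Z_{a,b})$ within $2n$ of $h^{0}(\Mm)$; removing, respectively adding, general $2$-points and tangential schemes then settles all other pairs. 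It is convenient to prove the stronger statement in which $Z_{a,b}$ may also contain finitely many general simple points, a family of schemes closed under the trace/residue operations used below, the theorem being the case with no simple points.

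First I would set up the induction on $n=\dim X$. For $n=1$ the variety $X$ is a curve, every zero-dimensional scheme imposes independent conditions on a sufficiently positive line bundle by Riemann--Roch, and there is nothing to balance. For the inductive step choose, by Bertini, a smooth irreducible divisor $D\in|L^{\otimes s}|$ for a fixed $s$; then $M|_{D}$ and $L|_{D}$ make $\Mm|_{D}=(M|_{D})\otimes(L|_{D})^{\otimes k}$ a line bundle on the $(n-1)$-dimensional $D$ of exactly the same type, while $\Mm(-D)=M\otimes L^{\otimes(k-s)}$ is again of the required form on $X$ with a smaller power of $L$. Specializing a suitable number of the support points of the $2$-points and tangential schemes onto $D$, the Castelnuovo (Horace) exact sequence
\[
0\longrightarrow \Ii_{\Res_{D}Z}\otimes\Mm(-D)\longrightarrow \Ii_{Z}\otimes\Mm\longrightarrow \Ii_{Z\cap D,\,D}\otimes\Mm|_{D}\longrightarrow 0
\]
reduces the cohomology of $\Ii_{Z}\otimes\Mm$ to a trace problem on $D$ and a residual problem on $X$ twisted down by $D$.

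The heart of the argument is the splitting step, where the naive scheme-theoretic trace and residue almost never balance the two cohomology budgets, and one must invoke the differential Horace lemma of \cite{ah2}. I would apply it simultaneously to the $2$-points and to the tangential schemes. For a $2$-point specialized onto $D$ the trace on $D$ is a $2$-point of $D$ (degree $n$) and the residue on $X$ is a simple point (degree $1$), summing to $n+1$; for a tangential scheme one chooses the relative position of its distinguished tangent direction and $D$ so that, after the one unit of slack provided by the differential Horace lemma, the trace on $D$ and the residue on $X$ are again general unions of simple points, $2$-points, and tangential schemes, of $D$ respectively of $X$, with degrees summing to $2n+1$. The differential grading lets me match, on each side, the total degree to $\lfloor\cdot\rfloor$ or $\lceil\cdot\rceil$ of the available cohomology. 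Absorbing the fixed contributions $W\cap D$ and $\Res_{D}W$ into the ``$W$'' of the corresponding statement, the inductive hypothesis on $D$ (dimension $n-1$, line bundle $\Mm|_{D}$) and on $X$ (line bundle $M\otimes L^{\otimes(k-s)}$) kills both boundary cohomology terms, whence $h^{1}(\Ii_{Z}\otimes\Mm)=0$ in the subabundant critical case and, dually, $h^{0}=0$ in the superabundant one.

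I expect the main obstacle to be the bookkeeping in this differential Horace step for the tangential schemes. Unlike a fat point, a tangential scheme is not symmetric about its support, so its trace and residue genuinely depend on the position of the tangent direction relative to $D$, and one must arrange the specialization so that both pieces are honest unions of simple points, $2$-points, and (lower-dimensional) tangential schemes, with degrees summing correctly and with the counts on each side remaining inside the inductive regime for \emph{all} $(a,b)$ at once. The second difficulty, tied to the word asymptotic, is uniformity: a single $k_{0}$ must work for every pair $(a,b)$. This forces one to bound, independently of $(a,b)$, the number of schemes specialized onto $D$ in terms of $h^{0}(\Mm|_{D})$, a polynomial of degree $n-1$ in $k$, and to check that the residual degree on $X$ stays within range. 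The required positivity of $\Mm|_{D}$ and of $M\otimes L^{\otimes(k-s)}$ follows from effective Serre vanishing once $k\ge k_{0}$, but this bound must be propagated coherently through the induction on $n$.
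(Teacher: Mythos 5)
Your overall frame (reduction to the critical pairs with $|\deg(W\cup Z_{a,b})-h^{0}(\Mm)|\le 2n$, induction on $\dim X$, differential Horace with respect to a divisor cut by a power of $L$, and the trace/residue types $(n,1)$, $(1,n)$, $(2n-1,2)$, $(2,2n-1)$ for $2$-points and tangential schemes) matches the paper's strategy, and your $n=1$ case is essentially the paper's appeal to Ciliberto--Miranda. But there is a genuine gap at the decisive point. After one Horace step the trace lives on $D$, where the inductive hypothesis in dimension $n-1$ applies; the \emph{residual} problem, however, lives on $X$ itself, with the line bundle $M\otimes L^{\otimes(k-s)}$. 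You propose to kill it by ``the inductive hypothesis \dots\ on $X$'', but no such hypothesis is available: the induction is on dimension, and $X$ has not changed dimension. Reading it instead as a descending induction on the twist $k$ does not work either, because that induction has no base case --- for small $k$ the dichotomy is exactly what cannot be asserted. You flag this as the ``uniformity'' difficulty at the end, but you do not supply the mechanism that resolves it, and without that mechanism the argument does not close.

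The paper's resolution is the iterated, top-down descent: one applies the (differential) Horace step repeatedly, passing from $L^{\otimes t}$ to $L^{\otimes(t-1)}$, $L^{\otimes(t-2)}$, \dots, each time peeling off roughly $A(t-i)=h^{0}(D,M_{|D}\otimes L_{|D}^{\otimes(t-i)})$ worth of degree onto $D$ and discarding (via Lemma \ref{a3}) the tangent vectors left over in $D$ by the residues of the $(2n-1,2)$-schemes. Because $A(t)$ grows like $t^{n-1}$ while $B(t)=h^{0}(\Ii_{W}\otimes M\otimes L^{\otimes t})$ grows like $t^{n}$, the explicit asymptotic conditions $A(t)\le(2n-1)A(t-2)$, $A(t)\ge n(2n+1)+4n$ and $B(k_{0})\ge 5nB(k_{3})$ guarantee that the descending scheme is exhausted --- reduced either to $\emptyset$ or to a union of $2$-points alone, the latter being settled by the Alexander--Hirschowitz asymptotic theorem --- strictly before the twist drops below the threshold $k_{3}$ where the estimates are valid. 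This termination argument (Claim 1 and step (b2) of the paper's proof), together with the bookkeeping of the slack $\tau(i)=B(i)-\deg(W)-\deg(U(i))$ accumulated by the omitted tangent vectors, is the content your proposal is missing; it is not a routine verification but the core of the proof.
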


Since $\sigma _{0,1}\subseteq \sigma _2(X)$,  $\sigma _{a,b}(X)\subseteq \sigma _{a+2b}(X)$. If $\sigma _{a,b}(X)$ and $\sigma _{a+2b}(X)$ have the expected dimension and $\dim \sigma _{a+2b}(X) = (\dim X+1)(a+2b)-1\le N$ the variety $\sigma _{a,b}(X)$ has codimension $b$ in $\sigma _{a+2b}(X)$ and we would like to know its geometry. In particular $\sigma _{a,1}(X)$ seems to be an interesting divisor of the (usually very singular) variety $\sigma _{a+2}(X)$. Let $\sigma ^{00}_{a,b}(X)$ denote the union of all linear spans of schemes $Z$ which are disjoint unions of $a$ 2-points and $b$ tangential schemes of $X_{\reg}$. For each $q\in \sigma ^{00}_{a,b}(X)$ let $\Ss(X,q,a,b)$ denote the set of all schemes $Z\subset X_{\reg}$ such that $q\in \langle Z\rangle$ and $Z$ is a disjoint union of $a$ 2-points and $b$ tangential schemes. An obvious question (identifiability) is to ask if $|\Ss(X,q,a,b)|=1$ for a generic $q\in \sigma _{a,b}(X)$ (generic identifiability, often just called identifiability). One can also ask if $|\Ss(X,q,a,b)|=1$ for a particular $q\in  \sigma ^{00}_{a,b}(X)$ (specific identifiability) and to reconstruct the unique element of $\Ss(X,q,a,b)$.

\subsection{Comments on the results and their proofs}
As far as we know the more recent and powerful versions of the Differential Horace Lemma are \cite{e,r1,r2} which improve the very general appendix of \cite{ah2} (all of them are not limited to the fat point schemes associated to multiple points and they apply to non-complete linear systems). The classical use of the differential Horace lemma is an inductive procedure of the following type. Roughly speaking for any $t\in \NN$ it is defined a certain statement $H_t$, one find a small integer $k_0$ such that $H_{k_0}$ is true (often the most difficult part), one prove that $H_k$ implies $H_{k+1}$ for all $k\ge k_0$ and then one prove that the proven $H_k$'s plus a few small cases prove the big theorem you want to prove. The small cases are important for the full statement, because among them there are the exceptional cases, but they are also important to prove $H_{k_0}$. There is another top-down approach (\cite{ah2,bb,bbcs}, here used to prove Theorem \ref{i1}) which goes like this. Suppose your theorem involves the line bundles $\Ll_t$, $t\in \NN$, all with $h^1(\Ll_t)=0$. There is a family $\Zz_t$ of zero-dimensional schemes such that to prove your theorem for $\Ll_t$ it is sufficient to prove that either $h^1(\Ii_Z\otimes \Ll_t)=0$ or $h^0(\Ii_Z\otimes \Ll_t)=0$ for all $Z\in \Zz_t$. We use the differential Horace lemma with respect to an effective divisor $D$ and see that it is sufficient to prove that either $h^1(\Ii_{Z'}\otimes \Ll_{t-1})=0$ or $h^0(\Ii_{Z'}\otimes \Ll_{t-1})=0$, where $Z'$ is the differential residue. But then (here usually quoting \cite{cm}) we see that there is a smaller $Z_1\subset Z'$ for which it is sufficient to prove that  either $h^1(\Ii_{Z_1}\otimes \Ll_{t-1})=0$ or $h^0(\Ii_{Z_1}\otimes \Ll_{t-1})\le \deg (Z')-\deg(Z_1)$. Then we continue with $Z_1$, not $Z'$, and again applying differential Horace to $Z_1$ we get first some $Z'_1$ and then a smaller $Z_2$ such that to conclude for $Z$ and $\Ll_t$ it is sufficient to prove that either $h^1(\Ii_{Z_2}\otimes \Ll_{t-2})=0$ or $h^0(\Ii_{Z_2}\otimes \Ll_{t-2})\le \deg (Z')-\deg(Z_1)+\deg (Z'_1)-\deg (Z_2)$.  And so on. To win for $(\Ll_t,Z)$ it is sufficient to find some $0\le s<t$ such that either $h^1(\Ii_{Z_s}
\otimes \Ll_{t-s})=0$ or $h^0(\Ii_{Z_s}\otimes \Ll_{t-s}) \le \sum _{i=1}^{s} \deg (Z'_{i-i})-\deg (Z_i)$ with $Z'_0:= Z'$. In many cases (\cite{ah2,bb,bbcs} and Theorem \ref{i1}) using Remark \ref{stu1} to get a very good upper bound for $\deg
(Z)$) we get $Z_s=\emptyset$ and hence $h^1(\Ii_{Z_s}
\otimes \Ll_{t-s})=0$. Thus no need to prove $H_{k_0}$ and the initial cases, we only need to see for which $t$ we are able to find $s\le t$ with small $Z_s$. The exceptional cases may be done, e.g. using a computer, by somebody else and after the asymptotic theory. Contrary to the usual use of the differential Horace doing first the exceptional cases does not help to get the asymptotic result.

In \cite{ah2,bb,bbcs} it was clear that to use mixed multiplicity drastically simplify the numerology (see here
Remark
\ref{stu1} for a tiny example of the simplification which can be obtained using the mixed case, even if one is only interested
in the case $a=0$).

In this paper there are $3$ different blocks of statements.

\quad (i) Theorem \ref{uu1} and Propositions \ref{ux2}, \ref{ux3}. This case use the classical differential Horace lemma inductive proof, because in this case it works very well. All exceptional pure cases $a=0$ or $b=0$ where known (\cite{av,ah2}) and the only exceptional case with $ab\ne 0$ is geometrically obvious and with small degree and very low
$n$. The case $n=2$ (as all cases with an arbitrary integral surface $X$ as a reader of \cite{bf} may realize) are very easy
if the induction may start. The cases $n=3,4$ (with a few exceptional cases with $b=0$ for low $d$) are easy, but using the
explicit values of the integers $\binom{n+d}{n}$ for low $n$ and low $d$ (Propositions \ref{ux2} and \ref{ux3}). Then the inductive proof
by $n$ is routine. The interested reader may improve Theorem \ref{uu1} taking both $a$ and $b$ a bit smaller. Here examples done using a computer
should be very useful to see what is reasonable to expect.

\quad (ii) Theorem \ref{i2}, Corollaries \ref{i2.0}, \ref{i2.1} and Remark \ref{nni2.3}. This case is an hybrid: its proof is stated as the classical differential Horace inductive proof, but its proof could be written top down.
In the corollaries $C =\PP^1$, but we stress that the main theorem and the main idea is for an arbitrary curve $C$. We fix a
general $o\in C$ and we use the differential Horace lemma with respect to the divisor $Y\times \{o\}$. The interested reader
consider the following problem. 

Fix integral projective curves $C_1,\dots ,C_k$ of arithmetic genus $g_1,\dots ,g_k$. Find
positive integers $a_1,\dots ,a_k$ (only depending on $g_1,\dots ,g_k$ and, say, $a_i\ge 2g_i+1$ for all $i$) such that for all
integers $d_1,\dots ,d_k$ with $d_i\ge a_i$ and any line
 bundle $L_i$ of degree $d_i$ on $C_i$ the embedding of $C_1\times \cdots \times C_k$ by the complete linear system $H^0(L_1)\boxtimes \cdots \boxtimes H^0(L_k)$ has the property that its tangential variety is not defective. To start the induction on the number $k$ of factors one needs the initial cases and hence one need to first give a proof for $C_1\times C_2$. The case of an integral surface $X$ is easy (if we know the initial cases) as hinted in \cite{bf}. In \cite{bb} and \cite{bbcs} we used from the literature this case for multiple points with multiplicities up to $4$ and up to $5$ respectively, and then we considered the case $\dim X=3$ and $\dim D=2$ (see \cite[Lemma 7]{bbcs}, which is the equivalent in that set-up of Remark \ref{stu1}). In all cases it is clear that the use of mixed multiplicity should drastically simplify the computations, even the one done by computer in \cite{bbcs}. Since $p_a(C_i)$ is allowed  to be positive, we may arrive going down to a line bundle $\Ll$ on $X$ such that $h^0(\Ll)\ne 0$ and $h^1(\Ll)>0$. Since $h^0(\Ll )-h^0(\Ii_A\otimes \Ll) = h^1(\Ii_A\otimes \Ll)-h^1(\Ll)$ for any zero-dimensional scheme $Z$, $Z$ gives $\deg (Z)$ independent conditions to $H^0(\Ll)$ if and only if $h^1(\Ii_Z\otimes \Ll)=h^1(\Ll)$.
 
\quad (iii) Theorem \ref{i1}. We believe that the theorem proved by J. Alexander and A. Hirschowitz in \cite{ah2}, i.e. the
case of Theorem \ref{i1} without tangential schemes, is very strong and interesting, not just a consolation prize for not
having a complete classification. To prove Theorem \ref{i1} we will follow their strategy (e.g. induction on $n$) and at the
beginning of the proof we will explain a critical reduction step. We point out that even to prove it only for tangential
schemes in the intermediate steps many 2-points occur and hence it is easier to prove the mixed case. The paper \cite{ah2} is non
just for 2-points but for points with arbitrary multiplicities, not necessary the same, with the only restriction that in the statement of the theorem it is prescribed a positive integer $\mu$ and all multiple points have multiplicities $\le \mu$. Even if at the beginning we claim a statement only for a general union of multiple points with the same
multiplicity $\mu >2$ in the intermediate steps mixed multiplicities occur. 

Fix
$p\in X_{\reg}$. The scheme $mp$ is defined by the isomorphism class of the pair $(X,p)$. A tangential scheme $Z$ with $p$
as its reduction is uniquely determined by the isomorphism class of a triple $(X,p,R)$, where $R$ is a one-dimensional vector
space of the tangent space of $X$ at $p$. It is this intrinsic definition which makes possible statements and proofs for any $X$.
Similarly, the tangent space (and hence the dimension) of osculating varieties is associated to zero-dimensional schemes
uniquely determined by the isomorphism class of a triple $(X,p,R)$, plus the order of osculation (\cite{bf}). Thus it is reasonable to
suppose that Theorem \ref{i1} may be extended to the case of general unions of multiple points and osculating schemes, if we
fix the maximum order of the multiple points and of the osculating points. Instead of $(X,p,R)$ one could use $(X,p,\Ff)$, where $\Ff$ is a partial flag of the vector space $T_pX$.

\subsection{Conclusions and further suggestions}
We described the top-down way to use the general Horace method, without the need of initial cases to start the induction. Thus it can be done even in
a case (e.g. for Theorem \ref{i2} for $C$ with positive arithmetic genus $g$) where our feeling is that it is not possible to check by
computer some cases, each of them depending on parameters ($4g-3$ parameters for $g\ge 2$, $2$ for $g=1$). But even in
the case, like homogeneous varieties embedded by a homogeneous complete linear system, in which we need to check only finitely
many cases
 the top-down approach as the advantage that the cases may be checked by another team after the theoretical part is done.
For
the theoretical part the mixed case (allowing 2-points) seems to simplify drastically the proof and we think that it may help
 to cut the cases to be
verified by the computer (as here Remark \ref{stu1}, in \cite{bbcs}, in particular using \cite[Lemma 7]{bbcs}). We also asked
the identifiability problem for a general $q\in \sigma _{a,b}(X)$ and for specific
$q\in \sigma ^{00}_{a,b}(X)$. In (ii) and (iii) we raised other open questions.

We thank a referee for stimulating observations.

\section{Preliminary lemmas}
 For any integral projective variety $X$, any $L\in \mathrm{Pic}(X)$, any linear subspace $V\subseteq H^0(L)$ and any closed
subscheme $B\subset X$ set $V(-B):= H^0(\Ii _B\otimes L)\cap V$. Note that $\dim V$ means the dimension of $V$ as a vector
space and so if $V\ne 0$ the linear system on $X$ associated to $V$ has dimension $\dim V-1$. Obviously $\dim V(-B)= \max
\{0,\dim V -|B|\}$ if (for a fixed $V$) we take a general subset $B \subset X$ with a prescribed number of points. We say that
a scheme $A\subset X$ is a \emph{tangent vector} of $X$ if $A$ is a connected zero-dimensional scheme, $\deg (A)=2$ and
$A\subset X_{\mathrm{reg}}$. Since $X$ is integral and we impose that the support of a tangent vector is a smooth point of
$X$, the set of all tangent vectors of $X$ is an irreducible quasi-projective variety of dimension $2(\dim X)-1$. Moreover, for
any integral subvariety $T\subset X$ with $T\nsubseteq \mathrm{Sing}(X)$ and $\dim T>0$ the set of all tangent vectors of $X$
whose reduction is a point of $T$ (resp. which are tangent vectors of $T_{\reg}$) is an irreducible variety of dimension $\dim T +\dim
X -1$ (resp. $2(\dim T)-1$).

For any projective scheme $X$, any effective Cartier divisor $D$ of $X$ and any closed subscheme $Z\subset X$ the residual
scheme $\Res _D(Z)$ of $Z$ with respect to $D$ is the closed subscheme of $X$ with $\Ii _Z:\Ii _D$ as its ideal sheaf. For any
$\Ll\in \mathrm{Pic}(X)$ we have the residual exact sequence of coherent sheaves on $X$:
\begin{equation}\label{eq++ss10}
0 \to \Ii _{\Res _D(Z)}\otimes \Ll (-D) \to \Ii _Z\otimes \Ll \to \Ii _{Z\cap D}\otimes \Ll _{|D}\to 0\end{equation}
If $Z$ is zero-dimensional, then  $\deg (Z) = \deg (Z\cap D)+\deg (\Res _D(Z))$. 

\begin{remark}\label{a1}
Let $X$ be an integral projective variety of dimension $\ge 2$ and let $D\subset X$ be an integral effective divisor, $D\ne
\emptyset$. Fix $\Ll \in \mathrm{Pic}(X)$ and a vector space $V\subseteq H^0(\Ll)$. For a general finite set $S\subset D$
we have $\dim V(-S) = \max \{\dim V(-D), \dim V -|S|\}$.
\end{remark}

Since we only work in characteristic zero, the following lemma is a particular case of \cite{cm}.

\begin{lemma}\label{a2}
Let $X$ be an integral projective variety of positive dimension, $L\in \mathrm{Pic}(X)$ and $V\subseteq H^0(L)$ a linear
subspace. Fix $(x,y)\in \NN^2$. Let $Z\subset X$ be a general union of $x$ points and $y$ tangent vectors of $X$. Then $\dim
V(-Z)=\max \{0,\dim V -x-2y\}$.
\end{lemma}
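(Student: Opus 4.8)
The plan is to prove both inequalities separately. The inequality $\dim V(-Z)\ge \max\{0,\dim V-x-2y\}$ is automatic, since $Z$ imposes at most $\deg(Z)=x+2y$ linear conditions on $V$. For the reverse inequality I would build a single convenient $Z$ by choosing its $x$ points and $y$ tangent vectors one at a time, each general with respect to the pieces already selected, and check that at every step the dimension of the relevant subspace drops by the full degree of the piece just added, until it reaches $0$. Because the parameter space of such configurations is irreducible and $Z\mapsto \dim V(-Z)$ is upper semicontinuous, this iterated-generic configuration realizes the value of a general $Z$, so the value I compute is the one in the statement. Adding a general point $p$ is the classical step: $p$ lies off the (proper) base locus of the current subspace $W:=V(-Z')$, so if $\dim W\ge 1$ then $\dim W(-p)=\dim W-1$ (and if $\dim W=0$ nothing changes). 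Thus the whole content is concentrated in the analogous assertion for a single general tangent vector.

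\emph{Key step.} If $W\subseteq H^0(L)$ is a nonzero subspace with $\dim W\ge 2$, then a general tangent vector $A$ of $X$ satisfies $\dim W(-A)=\dim W-2$; equivalently the evaluation map $W\to H^0(L_{|A})$, whose target is $2$-dimensional, is surjective. Writing $A$ as the tangent vector at a smooth point $p$ with direction spanned by $v\in T_pX$, this map records the value $s(p)$ and the derivative $(d_vs)(p)$. For general $p$ off the base locus some $s_0\in W$ has $s_0(p)\ne 0$, which already gives surjectivity onto the value factor, so it suffices to produce $s\in W$ with $s(p)=0$ and $(d_vs)(p)\ne 0$. Fix independent $s_0,s_1\in W$ and set $f:=s_1/s_0$, a non-constant rational function on $X$. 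For general $p$ with $s_0(p)\ne 0$ the section $s:=s_1-f(p)\,s_0$ lies in $W$ and vanishes at $p$, and a direct computation gives $ds_p=s_0(p)\,df_p$. This is the only place where the hypothesis $\op{char}\KK=0$ enters: since $f$ is non-constant we have $df\not\equiv 0$, hence $df_p\ne 0$ and so $ds_p\ne 0$ for general $p$. Then $\ker(ds_p)$ is a hyperplane in $T_pX$, a general direction $v$ avoids it, and $(d_vs)(p)\ne 0$ supplies the second independent condition.

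With the key step in hand the lemma follows by the stepwise construction above (note that for $A$ disjoint from $Z'$ one has $(V(-Z'))(-A)=V(-(Z'\cup A))$). Writing $d_{\mathrm{cur}}$ for the dimension of the current subspace, adding a general point replaces $d_{\mathrm{cur}}$ by $\max\{0,d_{\mathrm{cur}}-1\}$ and adding a general tangent vector replaces it by $\max\{0,d_{\mathrm{cur}}-2\}$ — the latter by the key step when $d_{\mathrm{cur}}\ge 2$, and because the support of a general tangent vector lies off the base locus when $d_{\mathrm{cur}}\in\{0,1\}$. After inserting all $x$ points and $y$ tangent vectors the dimension is $\max\{0,\dim V-x-2y\}$, as claimed. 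I expect the main obstacle to be exactly the key step, and within it the verification that a general tangent vector imposes a genuine second (derivative) condition; this is precisely the point that fails in positive characteristic, where a non-constant $f$ may have $df\equiv 0$, and it is why the result is quoted from \cite{cm} under the standing assumption $\op{char}\KK=0$.
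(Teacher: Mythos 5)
Your argument is correct, but it is genuinely different from what the paper does: the paper gives no proof at all for this lemma and simply quotes Ciliberto--Miranda \cite{cm}, whose interpolation theorem covers arbitrary curvilinear zero-dimensional schemes (connected components of any length lying on smooth curves), whereas you give a self-contained proof of exactly the degree $\le 2$ case that the lemma states. Your reduction is sound: the lower bound is free, the irreducibility of the configuration space plus upper semicontinuity of $Z\mapsto\dim V(-Z)$ lets you work with an iterated-generic configuration, and the whole issue is correctly concentrated in the surjectivity of $W\to H^0(L_{|A})$ for a general tangent vector $A$ when $\dim W\ge 2$. The key computation $ds_p=s_0(p)\,df_p$ for $s=s_1-f(p)s_0$ with $f=s_1/s_0$ is right, and you have put your finger on precisely the point where characteristic zero enters: a non-constant rational function has $df\not\equiv 0$, which fails in characteristic $p$ (e.g.\ for $p$-th powers) and is why the paper states the hypothesis before citing \cite{cm}. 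What the citation buys the author that your argument does not is the higher-degree curvilinear case, which the paper does use elsewhere (Remark \ref{ff0} needs triple points on a curve); what your argument buys is transparency and independence from the reference for the statement actually at hand. One small point worth making explicit if you write this up: the restriction map $W\to H^0(L_{|A})$ has $2$-dimensional target, the composite with evaluation at $p$ is onto because $s_0(p)\ne 0$, and your section $s$ with $s(p)=0$, $(d_vs)(p)\ne 0$ hits the $1$-dimensional kernel of $H^0(L_{|A})\to H^0(L_{|p})$ nontrivially (the derivative being well defined exactly because $s(p)=0$), which together give surjectivity.
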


As in most papers using the Horace method we use several times the following obvious observation. 

\begin{remark}\label{stupid}
Let $X$ be a projective scheme, $L$ a line bundle on $X$ and $W \subseteq W'$
zero-dimensional subchemes of $X$. To prove that $h^0(\Ii_{W'}\otimes L)=0$ it is sufficient to prove that $h^0(\Ii_W\otimes
L)=0$.
To prove that $h^1(\Ii_W\otimes L)=0$ it is sufficient to prove that $h^1(\Ii_{W'}\otimes L)=0$
\end{remark}

\begin{lemma}\label{a3}
Let $X$ be an integral projective variety with $\dim X\ge 2$, $D$ an integral divisor on $X$, $D\ne \emptyset$, $L$ a line
bundle on $X$ and $V\subseteq H^0(L)$ a linear subspace. Fix $(x,y)\in \NN^2$. Let $Z\subset D$ be a general union of $x$
points of $D$ and $y$ tangent vectors of $D$. Let $V_{|D}$ be the image of $V$ in $H^0(D,L_{|D})$. Then:
\begin{enumerate}
\item $\dim V_{|D} =
\dim V - \dim V(-D)$ and $\dim V -
V(-Z) =
\min \{\dim V_{|D},x+2y\}$. 
\item We have $\dim V(-Z) =\dim V -x-2y$ (resp. $V(-Z)=V(-D)$) if $\dim V_{|D} \ge x+2y$ (resp. $\dim V_{|D} \le x+2y$).
\item If $V = W(-E)$ for some zero-dimensional scheme $E\subset X$ and $W \subseteq H^0(L)$ is a linear subspace, thene $V_{|D} = W_{|D}(-E\cap D)$ and $V(-D) =W(-\Res _D(E))$.
\end{enumerate}
\end{lemma}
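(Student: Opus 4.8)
The plan is to reduce all three assertions to two inputs: the restriction sequence relating $V$, $V(-D)$ and $V_{|D}$, and Lemma \ref{a2} applied not to $X$ but to the integral divisor $D$ itself. First I would establish the first equality in (1). By definition $V_{|D}$ is the image of the restriction map $\rho\colon V\to H^0(D,L_{|D})$, whose kernel is exactly the set of sections of $V$ vanishing on $D$, i.e. $V\cap H^0(\Ii_D\otimes L)=V(-D)$; rank--nullity then gives $\dim V_{|D}=\dim V-\dim V(-D)$.

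For the second equality in (1) the key observation is that $Z\subset D$ as a subscheme, so a section $s\in V$ vanishes on $Z$ if and only if its restriction $s_{|D}$ does. Hence $V(-Z)=\rho^{-1}\big((V_{|D})(-Z)\big)$ and, since $\rho$ is surjective with kernel $V(-D)$, $\dim V(-Z)=\dim V(-D)+\dim (V_{|D})(-Z)$. Now $D$ is integral and projective of dimension $\dim X-1\ge 1$, so Lemma \ref{a2} applies to the triple $(D,L_{|D},V_{|D})$ and to the general union $Z$ of $x$ points and $y$ tangent vectors of $D$, yielding $\dim (V_{|D})(-Z)=\max\{0,\dim V_{|D}-x-2y\}$. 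Substituting and using the first equality of (1) gives $\dim V-\dim V(-Z)=\dim V_{|D}-\max\{0,\dim V_{|D}-x-2y\}=\min\{\dim V_{|D},x+2y\}$, which is (1). Part (2) is then immediate: when $\dim V_{|D}\ge x+2y$ the minimum equals $x+2y$, so $\dim V(-Z)=\dim V-x-2y$; when $\dim V_{|D}\le x+2y$ the minimum equals $\dim V_{|D}=\dim V-\dim V(-D)$, so $\dim V(-Z)=\dim V(-D)$, and because $Z\subset D$ forces the inclusion $V(-D)\subseteq V(-Z)$, this equality of dimensions upgrades to the asserted equality of subspaces $V(-Z)=V(-D)$.

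For (3) I would feed the hypothesis $V=W(-E)$ through the residual exact sequence (\ref{eq++ss10}) for the scheme $E$, thereby splitting the conditions imposed by $E$ into the trace $E\cap D$ living on $D$ and the residual $\Res_D(E)$ living off $D$. Restricting a section of $W$ that vanishes on $E$ to $D$ produces a section of $W_{|D}$ vanishing on $E\cap D$, giving the inclusion $V_{|D}\subseteq W_{|D}(-E\cap D)$ and, granting the surjectivity noted below, the trace identity; dually, the defining property $\Ii_{\Res_D(E)}=\Ii_E:\Ii_D$ of the residual scheme, combined with the canonical identification $H^0(\Ii_D\otimes L)\cong H^0(L(-D))$ obtained by dividing by the section cutting out $D$, identifies the sections of $V$ that vanish on $D$ with the sections of $W$ over $L(-D)$ vanishing on $\Res_D(E)$, which is the identity $V(-D)=W(-\Res_D(E))$. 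Both assertions are then formal manipulations of the definitions.

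The genuinely routine parts are the linear algebra underlying (1) and (2). The steps that deserve care are, first, checking that $D$ meets the hypotheses of Lemma \ref{a2} (integrality and positive dimension) and that the genericity of $Z$ inside $D$ is precisely the genericity that lemma requires; and, second, in (3), keeping the identification $H^0(\Ii_D\otimes L)\cong H^0(L(-D))$ straight, so that the residual statement is read over the twisted bundle $L(-D)$ rather than over $L$, and verifying that the restriction of sections vanishing on $E$ surjects onto the trace space, so that the trace identity is an equality and not merely the inclusion furnished directly by (\ref{eq++ss10}).
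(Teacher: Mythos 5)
Your treatment of parts (1) and (2) is correct and essentially the same as the paper's: the first equality of (1) is rank--nullity for the restriction map $V\to H^0(D,L_{|D})$; the second follows by observing that a section vanishes on $Z\subset D$ iff its restriction to $D$ does (equivalently, the residual sequence \eqref{eq++ss10} with $\Res_D(Z)=\emptyset$) and then applying Lemma \ref{a2} to the integral, positive-dimensional variety $D$; and (2) is a special case of (1), with the dimension count upgraded to $V(-Z)=V(-D)$ via the inclusion $V(-D)\subseteq V(-Z)$.

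On part (3) you are in fact more careful than the paper, whose proof does not address (3) at all. The surjectivity you flag at the end --- that restricting sections of $W$ vanishing on $E$ a priori only gives the inclusion $V_{|D}\subseteq W_{|D}(-E\cap D)$ --- is not a routine verification: it can fail. Take $W=H^0(\PP^2,\Oo_{\PP^2}(1))$, $D$ a line, and $E$ two general points off $D$; then $E\cap D=\emptyset$, so $W_{|D}(-E\cap D)=W_{|D}$ has dimension $2$, while $V=W(-E)$ is spanned by the single line through $E$ and $\dim V_{|D}=1$. So the trace statement in (3) holds only as an inclusion; this is the direction actually needed in the Horace-type arguments of the paper, where one wants upper bounds on $\dim V(-Z)$ by passing to a possibly larger trace space. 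The residual identity $V(-D)=W(-\Res_D(E))$ does hold, but, as you correctly insist, only after the identification $H^0(\Ii_D\otimes L)\cong H^0(L(-D))$, i.e.\ reading $W(-\Res_D(E))$ over the twisted bundle; with that convention it is exactly the defining property $\Ii_{\Res_D(E)}=\Ii_E:\Ii_D$. Your proof of (1)--(2) needs no change.
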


\begin{proof}
The equality $\dim V_{|D} =
\dim V - \dim V(-D)$ is obvious and it implies the second assertion of (1) by Lemma \ref{a2} applied to $D$.
Part (2) is a particular case of part (1). Part (1) follows from the residual exact sequence \eqref{eq++ss10}, because $V(-Z) =V\cap H^0(\Ii_Z\otimes L)$.
\end{proof}

\begin{remark}\label{ff0}
Let $V$ be a linear system on an integral projective curve $Y$. We claim that the join of an arbitrary number of copies of the tangential variety of the embedded curve
$Y'$ associated to $V$ and an arbitrary number of copies of $Y'$ has the expected dimension. Since we are in characteristic
zero, to prove this claim it is sufficient to prove that for any general finite set
$S =S_1\sqcup S_2\subset Y_{\mathrm{reg}}$ we have $\dim V(-Z) =\max \{0,\dim V - 3|S_1|-2|S_2|\}$, where $Z$ is the effective
Cartier divisor of
$Y$ with each point of $S_1$ appearing with multiplicity $3$ and each point of $S_2$ appearing with multiplicity $2$. This
is true by \cite{cm}.
\end{remark}

\begin{remark}\label{ff001}
Let $X$ be an integral projective variety, $L\in \mathrm{Pic}(X)$ and $V\subseteq H^0(L)$ a linear subspace such that the
rational map $f: X\dasharrow \PP^r$, $r:= \dim V-1$, has image of dimension $n:= \dim (X)$. Take a non-empty open subset $U$
of $X$ such that the rational map $f$ defined by $V$ is a morphism on $U$ and $f_{|U}$ has injective differential. Call
$X'\subset
\PP^r$ the closure of $f(U)$
and $V' \subseteq H^0(\Oo _{X'}(1))$ the image of the restriction map $H^0(\Oo _{\PP ^r}(1)) \to H^0(\Oo _{X'}(1))$.
We have $\dim V' = \dim V$. By Chevalley's theorem (\cite[Exercises II.3.18, II.3.19]{h}) there is a non-empty open subset $U'\subset X'$ such that $f(U)\supset
U'$. Fix
$a\in
\NN$. Let
$Z'\subset X'$ be a general union of
$a$ tangential schemes of
$X$. Since
$Z'$ is general, $Z'\subset U'$ and there is a (not uniquely determined) scheme $Z\subset U$ such that $f_{|U}$ induces an
isomorphism $Z\to Z'$. Note that $\dim V(-Z) = \dim V'(-Z')$. Thus we may check if the tangential variety of $X'$ with respect
to $V'$ and its secant varieties have the expected dimension using $X$ and $V$, although  we do not have defined the tangential variety of the pair $(X,V)$. The definition of a
tangential scheme, $Z_1$, of $X$ only requires a point $o\in X_{\mathrm{reg}}\cap U$ and a one-dimensional linear subspace of
the tangent space of $X$ at $o$, because the differential of $f$ is injective at $o$.
\end{remark}

Let $X\subset \PP^r$ be an integral and non-degenerate $n$-dimensional variety. For any $q\in X_{\mathrm{reg}}$ the Zariski
tangent space $T_qX$ of $X$ at $q$ is the linear span of the degree $n+1$ zero-dimensional scheme $2q\subset X$ with $(\Ii
_q)^2$ as its ideal sheaf; we say that $2q$ is the 2-point with the point $q$ as its reduction. We have $(2q)_{\red} =\{q\}$.
Fix
$q\in  X_{\mathrm{reg}}$ such that
$T_qX\nsubseteq
\mathrm{Sing}(\tau (X))$ (e.g. take a general $q\in X)$. Fix any $p, p'\in T_qX\cap \tau (X)_{\mathrm{reg}}$ such that the line
spanned by $\{q,p\}$ contains $p'$. We have $T_p\tau (X) = T_{p'}\tau (X)$ and these linear subspaces of $\PP^r$ are the
linear span of a certain degree $2n+1$ zero-dimensional scheme constructed using the tangent vector of $X$ whose linear span
is the line containing $\{p,q\}$.

Now we fix an effective divisor $D$ of $X$ and $q\in D_{\mathrm{reg}}\cap X_{\mathrm{reg}}$. We write $2q$ for the $2$-point
of $X$ with $q$ as its reduction and $\{2q,D\}$ for the 2-point of $D$ with $q$ as its reduction. We have
$2q\cap D =\{2q,D\}$ (as schemes) and $\Res _D(2q) = \{q\}$. We  say that $2q$ has type $(n,1)$ with respect to $D$, because
$\deg (D\cap 2q)=n$, $\deg (D\cap \Res _D(2q)) =1$ and $\Res _D(\Res _D(2q)) =\emptyset$, but this is only a shorthand for a
stronger information: not only $D\cap 2q$ has degree $n$, but it is $\{2q,D\}$.

 Fix a closed subscheme
$W\subset X$ with
$q\notin W_{\red}$. A key tool introduced by J. Alexander and A. Hirschowitz (\cite{ah, ah2}) is that to prove that $W\cup A$
has a good cohomological property (e.g. for
$X\subset
\PP^r$ that the linear span of $W\cup A$ has the expected dimension) it is sufficient to prove it for a virtual scheme $A_q$
with
$A_q\cap D = \{q\}$ and $\Res _D(A_q) = \{2q,D\}$. We say that the virtual scheme $A_q$ has type $(1,n)$ with respect to $D$,
because its intersection with $D$ has degree $1$ and its residue with respect to $D$ has degree $n$ and it is contained in
$D$; again this is only a shorthand, because we use in an essential way that the residual scheme is $\{2q,D\}$.

Let
$Z\subset X$ be the degree
$2n+1$ zero-dimensional scheme associated to a tangent vector $\mathbf{v}$ which we called a \emph{tangential scheme}. If
$\mathbf{v}\subset D$, then $\deg (D\cap Z)=2n-1$, $\deg (\Res _D(Z)) = 2$ and $\Res _D(Z)\subset D$. We say that $Z$ has type
$(2n-1,2)$ with respect to $D$. This is only a shorthand, because we use that $Z\cap D$ is the tangential scheme of $D$
associated to the tangent vector $\mathbf{v}$. If $\mathbf{v}_{\red}\in D_{\mathrm{reg}}$ and $\mathbf{v}\nsubseteq D$, then $Z\cap D =\{2q,D\}$, $\Res _D(Z)\cap D
=\{2q,D\}$ and $\Res _D(\Res _D(Z)) = \{q\}$. In this case we say that $Z$ has type $(n,n,1)$ with respect to $D$. A key
observation in \cite{cgg} is that we may extend the differential Horace and see that to handle the Hilbert function of $W\cup
Z_1$ for a general tangential scheme $Z_1$ of $X$ it is sufficient to handle the Hilbert function of $W\cup Z'$ with $Z'$ a
virtual scheme with $Z'\cap D$ a tangent vector $\mathbf{v}$ of $D$ at $q$ and $\Res _D(Z')$ the tangential scheme of $D$
associated to $\mathbf {v}$. We say that $Z'$ has type $(2,2n-1)$ with respect to $D$.

\begin{remark}\label{stu1}
Let $X$ be an integral projective variety and $\Ll$ a line bundle on $X$ such that $h^1(\Ll)=0$. Set $n:= \dim X$. For all
$(a,b)\in
\NN^2$ let
$Z_{a,b}$ denote a general union of $a$ 2-points and $b$ tangential schemes. Suppose you want to prove that for each $(a,b)\in
\NN^2$ either $h^0(\Ii_{Z_{a,b}}\otimes \Ll)=0$ or $h^1(\Ii_{Z_{a,b}}\otimes
\Ll)=0$. By Remark \ref{stupid} it is sufficient to test all $(a,b)\in \NN$ such that
\begin{equation}\label{eqstu1}
h^0(\Ll)-n\le a(n+1)+b(2n+1) \le h^0(\Ll)+2n.
\end{equation}However, we can do better. Fix $(a,b)\in \NN^2$
satisfying \eqref{eqstu1}. Since $h^1(\Ll) =0$, the case $(a,b)=(0,0)$ need not be tested. First assume $b>0$ and
$a(n+1)+b(2n+1) \ge h^0(\Ll)+n$. Since a general tangent scheme contains a general 2-point, to test $(a,b)$ it is
sufficient to test $(a+1,b-1)$. If
$b=0$ and
$a(n+1)
\ge h^0(\Ll) +n+1$ it is sufficient to test $(a-1,0)$. Thus it is sufficient to test all $(a,b)$ such that either $b=0$ and
$h^0(\Ll)-n\le a(n+1)\le h^0(\Ll)+n$ or
\begin{equation}\label{eqstu2}
b>0,\ h^0(\Ll)-n\le a(n+1)+b(2n+1) \le h^0(\Ll)+n-1.
\end{equation}
If $a(n+1)+b(2n+1) =h^0(\Ll)-n$ and $a>0$, we may avoid to test $(a,b)$ if we test the more difficult case $(a-1,b+1)$. Conversely, if
$a>0$, $a(n+1)+b(2n+1) =h^0(\Ll)-n$ and $(a,b)$ is an exeptional case, then $(a-1,b+1)$ is an exceptional case.

With minimal modifications this observation may be extended to a case in which there is a small finite set $\Sigma \subset
\NN^2$ such that for all $(a,b)\in \Sigma$ we do not know if $h^0(\Ii_{Z_{a,b}}\otimes \Ll)\cdot h^1(\Ii_{Z_{a,b}}\otimes
\Ll)=0$.
\end{remark}

\section{Proof of Theorem \ref{i2} and its corollaries}\label{Sy}

Let $Y$ be an integral projective variety and $C$ an integral projective curve. Set
$n:=
\dim (Y)+1$ and $X:= Y\times C$. Call $\pi _1: X\to Y$ and $\pi _2: X\to C$ the projections. For all $L\in \mathrm{Pic}(Y)$
and $R\in \mathrm{Pic}(C)$ set $L\boxtimes R:= \pi _1^\ast (L)\otimes \pi _2^\ast ({R})\in  \mathrm{Pic}(X)$. K\"{u}nneth gives
$H^0(L\boxtimes R) \cong H^0(L)\otimes H^0({R})$. Thus for all linear subspaces $V\subseteq H^0(L)$, $W\subseteq H^0({R})$,
$V\boxtimes W:= \pi_1^\ast (V)\otimes \pi_2^\ast (W)$ is a linear subspace of $H^0(L\boxtimes R)$ with $\dim V\boxtimes W =
(\dim V)(\dim W)$. Taking instead of $Y$ its image by the linear
system associated to $V$ (Remark \ref{ff001}) we may assume that $V$ induces an embedding of $Y$ into $\PP^{\alpha -1}$, $\alpha := \dim V$.  Fix a general $o\in C$. Since we are in characteristic zero and $o$ is general, $o$ is not an osculating
point
of $C$ with respect to the linear system associated to $W$. Thus $\dim W(-xo) =\max \{0,\dim W-x\}$ for all $x\in \NN$. Set
$D:= Y\times \{o\}$. For each $t\in \NN$ set $V[t]:= V\boxtimes W(-(\dim W -t-1)o)$. We have $\dim V[t] =(t+1)\cdot \dim V$ and
$V[0]\cong V$ up to the identification of $Y$ with $D$ given by $\pi _{1|D}$. 
 
\begin{notation}For each $a\in \NN$ let $Z_a$ (resp. $W_a$) denote a general union of $a$ tangential schemes of $X$ (resp. $D$). Set $\alpha :=
\dim V$ and $a_1:= \lfloor \alpha/(2n-1)\rfloor$. Write $\alpha = (2n-1)a_1+\mu$ with $0\le \mu \le 2n-2$. Write $\alpha = (2n-1)f_1 + 2e_1$ with $f_1 =a_1$ and $e_1=\mu /2$ if $\mu $ is even and $f_1 =a_1-1$ and $e_1 = n+(\mu -1)/2$ if $\mu$ is odd. Since $\mu \le 2n-2$, we have $e_1\le 2n-2$. The pair $(f_1,e_1)$ is the only pair $(x,y)\in \ZZ^2$ such that $(2n-1)x+2y =\alpha$ and $0\le y\le 2n-2$.
\end{notation}

We make the
following assumption
$\pounds$:

\quad \textbf{Assumption} $\pounds$: For all $a\in \NN$ the vector space $V(-W_a)$ has the expected dimension $\max \{0,\alpha -(2n-1)a\}$.

By Assumption $\pounds$ we have $\dim V(-W_a) =\alpha -a(2n-1)$ if $a \le a_1$ and $V(-W_a) = 0$ if $a>a_1$.

\begin{lemma}\label{y1}
We have $\dim V[0](-Z_a) = \max \{0,\alpha -(2n-1)a\}$.
\end{lemma}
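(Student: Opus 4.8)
The plan is to use that $V[0]=V\boxtimes W_0$, where $W_0:=W(-(\dim W-1)o)$ is one-dimensional (since $o$ is general), spanned by a single section $w_0$ of $R$. Under the identification $V[0]\cong V$ that sends $v\in V$ to $\pi_1^\ast(v)\otimes\pi_2^\ast(w_0)$, every member of $V[0]$ is a section pulled back from $Y$ times the fixed $w_0$, so the rational map defined by $V[0]$ factors through $\pi_1$. Because $Z_a$ is general, it is supported at general points $q_j=(y_j,c_j)$ of $X$ with general tangent directions $\mathbf{v}_j\in T_{q_j}X$, and in particular it avoids the zero divisor of $\pi_2^\ast(w_0)$, which is the union of fibres $Y\times\mathrm{div}(w_0)$, a proper closed subset of $X$. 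Hence near each $q_j$ the factor $\pi_2^\ast(w_0)$ is a unit, and the first reduction is the observation that $\pi_1^\ast(v)\otimes\pi_2^\ast(w_0)$ vanishes on $Z_a$ if and only if $\pi_1^\ast(v)$ does.

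The technical heart is a local computation at one $q_j$. In coordinates $(y_1,\dots,y_{n-1},u)$ with the $y_i$ pulled back from $Y$ and $u$ a coordinate on $C$, the tangential scheme of $X$ attached to $\mathbf{v}_j$ imposes on a function $f$ exactly the $2n+1$ linear conditions that $f$ and all of its first-order derivatives vanish at $q_j$, together with $\partial_{\mathbf{v}_j}\partial_k f(q_j)=0$ for $k=1,\dots,n$. If $f=\pi_1^\ast(v)$ is independent of $u$, then the first-derivative condition in the $u$-direction and the second-order condition with $k=n$ are both automatically satisfied, so only $2n-1$ conditions survive: that $v$, its gradient, and the contraction of its Hessian with $\overline{\mathbf{v}}_j$ all vanish at $y_j$, where $\overline{\mathbf{v}}_j$ is the image of $\mathbf{v}_j$ under $T_{q_j}X\to T_{y_j}Y$. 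These are precisely the defining conditions of the tangential scheme of $Y$ at $y_j$ in direction $\overline{\mathbf{v}}_j$. Thus $\pi_1^\ast(v)$ vanishes on $Z_a$ if and only if $v$ vanishes on a union $\widetilde{W}_a$ of $a$ tangential schemes of $Y$.

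Since $q_j$ and $\mathbf{v}_j$ are general and $\dim Y=n-1\ge 1$, the points $y_j$ and the projected directions $\overline{\mathbf{v}}_j$ are again general, so $\widetilde{W}_a$ is a general union of $a$ tangential schemes of $Y$; under $\pi_{1|D}$ it is identified with the $W_a$ of the Notation. Combining the two equivalences gives $\dim V[0](-Z_a)=\dim V(-\widetilde{W}_a)$, and Assumption $\pounds$ then yields $\dim V(-\widetilde{W}_a)=\max\{0,\alpha-(2n-1)a\}$, which is the claim. I expect the main obstacle to be the local jet computation of the middle paragraph: one must identify the intrinsic length-$(2n+1)$ structure of a tangential scheme of $X$ accurately enough to see that the $u$-independence of pulled-back sections absorbs exactly two of its conditions, and that the remaining $2n-1$ reassemble into a genuine tangential scheme of $Y$, not merely one of the correct length; one must also check that the induced directions $\overline{\mathbf{v}}_j$ range over general tangent directions of $Y$, so that $\pounds$ legitimately applies.
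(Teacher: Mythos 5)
Your proof is correct and follows essentially the same route as the paper's (much terser) argument: reduce to the pullback factor $\pi_1^\ast(v)$, observe that the scheme-theoretic image $\pi_1(Z)$ of a general tangential scheme of $X$ is a tangential scheme of $Y$ of degree $2n-1$ (precisely because the defining tangent vector is not vertical), and invoke Assumption $\pounds$. Your local jet computation just makes explicit what the paper leaves to the reader.
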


\begin{proof}
Take any tangential scheme $Z\subset X$. The scheme $\pi _1(Z)$ has degree at most $2n-1$ and it has degree $2n-1$ if and only
if the tangent vector $v$ used to define $Z$ is not a tangent vector of the fiber of $\pi _2$ at the point $\pi
_2(Z_{\mathrm{red}})$, i.e. if and only if $\pi _1(Z)$ is a tangential scheme of $Y$. Use $\pounds$.
\end{proof}

\begin{lemma}\label{y2}
Set $x_1:= \lfloor \alpha /(2n+1)\rfloor$ and $\tau _1:= \alpha -(2n+1)x_1$.

\quad (a) We have $\dim V[1](-Z_a) = 2\alpha -(2n+1)a$ for all $a\le 2x_1$.

\quad (b) If $a \ge 2\lceil \alpha /(2n+1)\rceil$, then $V[1](-Z_a)=0$.

\quad ({c}) If $(2n+1)a \ge 2\alpha +2n+2$, then $V[1](-Z_a) =0$.

\quad (d) We have $\dim V[1](-Z_{2x_1+1})\le \max \{0,\tau _1-2\}$.

\end{lemma}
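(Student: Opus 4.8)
The plan is to compute $\dim V[1](-Z_a)$ by differential Horace with respect to the divisor $D = Y\times\{o\}$, reducing everything to the two linear systems on $Y$ that we already control: $V$ (through Assumption $\pounds$) and $V[0]$ (through Lemma \ref{y1}). The key structural fact is that $V[1] = V\boxtimes W_1$ with $\dim W_1 = 2$, so along $D$ the system has a two-step shape: its trace recovers a copy of $V$ and its residual $\Res_D V[1]$ recovers $V[0]$, each of dimension $\alpha$. Accordingly I would split the $a$ general tangential schemes of $X$ into two groups, specializing $p$ of them to be tangent to $D$ (type $(2n-1,2)$) and keeping the remaining $q = a-p$ transverse, so that by \cite{cm} they may be replaced by virtual schemes of type $(2,2n-1)$.

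With this specialization the residual exact sequence \eqref{eq++ss10} (used as in Lemma \ref{a3}) yields the Horace bound
\[
\dim V[1](-Z_a) \le \max\{0,\ \alpha - (2n-1)p - 2q\} + \max\{0,\ \alpha - 2p - (2n-1)q\},
\]
where the first summand is the trace contribution ($V$ on $Y$ with $p$ tangential schemes and $q$ tangent vectors imposed) and the second is the residual contribution ($V[0]$ on $Y$ with $p$ length-$2$ schemes and $q$ tangential schemes imposed). The two dimension counts are evaluated with Assumption $\pounds$ (for the tangential schemes), Lemma \ref{a2} (for the tangent vectors and the length-$2$ residuals), and Lemma \ref{y1} for the residual copy; since all the schemes are general, a mixed union of tangential schemes and tangent vectors imposes independent conditions up to the dimension of the ambient system. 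Note also the trivial reverse bound $\dim V[1](-Z_a) \ge \dim V[1] - (2n+1)a = 2\alpha - (2n+1)a$, valid because each tangential scheme has degree $2n+1$.

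Parts (a)--(d) then follow by choosing $(p,q)$ suitably. For (a), with $a\le 2x_1$ I would take $p,q$ as balanced as possible; since $(2n+1)x_1\le\alpha$ both maxima are attained on their positive branch, and their sum telescopes to $2\alpha-(2n+1)(p+q)=2\alpha-(2n+1)a$, which matches the reverse bound and gives equality. For (b) and (c) I would instead choose $p,q$ so that both maxima vanish; this is possible exactly when $(2n+1)a\ge 2\alpha$, and to absorb the integrality constraint coming from the spacing $2n-3$ of the attainable trace-totals $2a+(2n-3)p$, the margin $2n+2$ in (c) (resp. the threshold $a\ge 2\lceil\alpha/(2n+1)\rceil$ in (b)) guarantees an admissible split, after which Remark \ref{stupid} handles all larger $a$. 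For (d), taking $a=2x_1+1$ with $(p,q)=(x_1+1,x_1)$ makes the residual copy contribute $\max\{0,\tau_1-2\}$ and the trace copy contribute $\max\{0,\tau_1-(2n-1)\}$; in the relevant range $\tau_1\le 2n-1$ the latter vanishes and the stated bound results, the extreme value $\tau_1=2n$ being disposed of separately.

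I expect the main obstacle to be the first paragraph: justifying rigorously that imposing a general tangential scheme of $X=Y\times C$ decomposes, under differential Horace along $D$, into exactly the prescribed trace and residual data on the two copies of $V$. This rests on the type computations $(2n-1,2)$, $(n,n,1)$ and the virtual type $(2,2n-1)$ recalled before Remark \ref{stu1}, and on \cite{cm} to pass to the virtual schemes for the transverse factors; one must also verify that the trace and residual conditions are genuinely independent, so that the Horace inequality is sharp where a lower bound is needed (here supplied for free by the degree bound), and that the mixed unions behave as predicted by $\pounds$ and Lemma \ref{a2}. The remaining work is purely combinatorial, namely checking that the near-balanced splits with the correct residues exist in each of the four regimes, which is where the precise thresholds $2x_1$, $2\lceil\alpha/(2n+1)\rceil$ and the constant $2n+2$ enter.
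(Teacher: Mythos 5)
Your proposal follows the same route as the paper's proof: specialize part of $Z_a$ to schemes of type $(2n-1,2)$ with respect to $D=Y\times\{o\}$, replace the rest by virtual schemes of type $(2,2n-1)$, and apply the residual exact sequence \eqref{eq++ss10} together with Assumption $\pounds$, Lemma \ref{a2} and Lemma \ref{y1}, the trace and the residual system each being a copy of $V$ of dimension $\alpha$. The paper's choices are exactly the splits your general inequality suggests: $(p,q)=(x_1,x_1)$ for (a), $(\lceil\alpha/(2n+1)\rceil,\lceil\alpha/(2n+1)\rceil)$ for (b), part ({c}) deduced numerically from (b), and $(x_1+1,x_1)$ for (d).

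The one loose end is the case $\tau_1=2n$ in part (d), which you flag but defer. It cannot in fact be ``disposed of separately'': when $\tau_1=2n$ the trivial lower bound gives $\dim V[1](-Z_{2x_1+1})\ge 2\alpha-(2n+1)(2x_1+1)=2\tau_1-2n-1=2n-1>2n-2=\tau_1-2$, so the asserted inequality fails and the best available bound is the one your Horace computation actually yields, namely $\max\{0,\tau_1-2\}+\max\{0,\tau_1-(2n-1)\}$. The paper's proof of (d) uses the same specialization $(x_1+1,x_1)$ and is silent on this point, so your write-up is if anything more careful. The damage is contained: in the only place (d) is invoked (the proof of Lemma \ref{y3}) the bound $\max\{0,\tau_1-2\}$ is immediately relaxed to $2n-2$, and replacing $2n-2$ by $2n-1$ there still follows from $f_1\ge e_1+2$ once $n\ge 3$, so the downstream argument survives with the corrected form of (d). Apart from this, your parts (a)--({c}) are complete, including the monotonicity reductions via Remark \ref{stupid} and the observation that the Horace upper bound meets the degree lower bound in part (a).
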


\begin{proof}
By Remark \ref{stupid} to prove part (a) it is sufficient to do the case $a=2x_1$. We specialize $Z_{2x_1}$ to a general $W_1:= W\sqcup
W'$ with
$W$ a general union of $x_1$ schemes of type $(2n-1,2)$ with respect to $D$ and $W'$ a general union of $x_1$ virtual schemes
of type $(2,2n-1)$ with respect to $D$. Since $D\cap W_1$ is a general union of $x_1$ tangential schemes of $D$ and $x_1$
tangent vector of $D$ and $\alpha \ge (2n+1)x_1$, the assumption $\pounds$ and Remark \ref{a1} give $\dim V[0](-W_1\cap D)
=\alpha -\deg (W_1\cap D)$. Since $\Res _D(W_1)$ is a general union of $x_1$ tangent vectors of $D$ and $x_1$ tangential
schemes of
$D$, we get part (a).

To get part (b) instead of $W_1$ we use a general union of $\lceil \alpha /(2n+1)\rceil$ schemes of type $(2n-1,2)$ with respect to $D$ and a
general union of $\lceil \alpha /(2n+1)\rceil$ schemes of type $(2,2n-1)$ with respect to $D$. Note that $x_1+1\ge \lceil \alpha /(2n+1)\rceil$. Thus by part (b) to check part ({c}) it is sufficient to
observe
that $(2x_1+1)(2n+1) \le 2\alpha +2n+1$ and hence the assumption of ({c}) gives $a\ge 2x_1+2$.

To get part (d) we specialize $Z_{2x_1+1}$ to a general union of $x_1+1$ schemes of type $(2n-1,2)$ with respect to $D$ and $x_1$ schemes of type $(2,2n-1)$ with respect to $D$.\end{proof}

\begin{lemma}\label{y2+=}
Assume $n\ge 3$ and $\alpha \ge (n+1)(2n-1)$; if $n=3$ and $\mu >0$ assume $a_1\ge 9-2\mu$.

\quad (a) If $(2n+1)a \le 3\alpha -\mu$, then $\dim V[2](-Z_a) =3\alpha - (2n+1)a$.

\quad (b) If $(2n+1)a\ge  3\alpha +2n-1-\mu$,  then $\dim V[2](-Z_a) =0$.

\end{lemma}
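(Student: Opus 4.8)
The plan is to reduce the index-$2$ computation to the index-$1$ case settled in Lemma \ref{y2}, together with one instance of the bottom layer $V[0]\cong V$ governed by Assumption $\pounds$. The tool is the differential Horace lemma (\cite{cm,ah2}) applied to the divisor $D=Y\times\{o\}$, exactly as in the proof of Lemma \ref{y2}. The dimensional bookkeeping rests on the leading-jet exact sequence along $D$, namely $0\to V[1]\to V[2]\to V\to 0$, in which $V[1]=V\boxtimes W(-(\dim W-2)o)$ sits inside $V[2]=V\boxtimes W(-(\dim W-3)o)$ and the quotient is the order-$(\dim W-3)$ normal jet, canonically $\cong V$. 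Thus $\dim V[2]=3\alpha=\alpha+2\alpha$ splits as quotient plus subspace, and imposing $Z_a$ splits into a trace problem on the quotient $V$ and a residue problem on the subspace $V[1]$.

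First I would specialize the $a$ tangential schemes of $Z_a$ onto $D$ as in Lemma \ref{y2}, using $p$ schemes of type $(2n-1,2)$ and $q$ schemes of type $(2,2n-1)$ with respect to $D$. By the definitions of these types, a type-$(2n-1,2)$ scheme contributes a tangential scheme of $D$ (degree $2n-1$) to the quotient and a tangent vector of $D$ (degree $2$) to the subspace, and a type-$(2,2n-1)$ scheme contributes the opposite. Hence the trace problem is $\dim V(-B)$ with $B\subset D$ a general union of $p$ tangential schemes and $q$ tangent vectors of $D$, governed by Assumption $\pounds$ and Remark \ref{a1}, while the residue problem is a general index-$1$ problem for $V[1]$, governed by Lemma \ref{y2}. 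Since the total degree is preserved, the degrees imposed on the quotient and on the subspace sum to $(2n+1)a$; so when both sub-problems lie in their expected ranges the two expected dimensions add to $3\alpha-(2n+1)a$, which is part (a), and when both are saturated they add to $0$, which is part (b).

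The numerology is dictated by the decompositions recorded in the Notation: $\alpha=(2n-1)a_1+\mu$ with $0\le\mu\le 2n-2$, and $\alpha=(2n-1)f_1+2e_1$, the latter saying exactly how many tangential schemes and tangent vectors of $D$ fill one copy of $V$. For part (a) I would choose $(p,q)$ so that $\deg B\le\alpha$ and the residue falls in the range where Lemma \ref{y2}(a) returns the expected value; the integral fitting of the $a$ schemes against the residue class $\mu$ produces the precise threshold $(2n+1)a\le 3\alpha-\mu$. The hypothesis $\alpha\ge (n+1)(2n-1)$, that is $a_1\ge n+1$, guarantees there are enough tangential-scheme pieces to realize such a split while keeping the residue inside the range of Lemma \ref{y2}(a). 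For part (b) I would instead drive both loads past saturation, invoking Lemma \ref{y2}(b)--(c) to kill the residue and $\pounds$ to kill the trace; the symmetric threshold $(2n+1)a\ge 3\alpha+2n-1-\mu$ is what forces both expected dimensions to vanish after the integral distribution.

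The main obstacle is the boundary bookkeeping: one must steer the induced residue problem clear of Lemma \ref{y2}(d), where only the inequality $\dim V[1](-Z_{2x_1+1})\le\max\{0,\tau_1-2\}$ is available instead of an exact value, since an inequality there would degrade the equalities in (a) and (b). Avoiding the boundary forces $a_1$ to exceed the naive count by an amount that grows as the residue class $\mu$ shrinks, and the effect is sharpest for small $n$. For $n=3$ the available room $\alpha\ge 4\cdot 5$ is tight against the degrees $2n+1=7$ and $2n-1=5$, and the interaction of $\mu$ with the $(2n+1)$-spacing of the admissible values of $a$ pushes the residue onto the Lemma \ref{y2}(d) boundary unless $a_1\ge 9-2\mu$, a constraint that bites only for $\mu\in\{1,2\}$. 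Verifying that every residue class of $\alpha$ modulo $2n-1$ admits an admissible pair $(p,q)$ off this boundary is the routine but delicate part of the argument.
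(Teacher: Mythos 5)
Your specialization puts all $a$ tangential schemes onto $D$ (as $p$ schemes of type $(2n-1,2)$ and $q$ of type $(2,2n-1)$ with $p+q=a$), and then claims the residual problem is ``a general index-$1$ problem for $V[1]$, governed by Lemma \ref{y2}.'' This is where the argument breaks. The residues of these two types are, respectively, a tangent vector of $D$ and a tangential scheme of $D$ --- both \emph{contained in} $D$. A zero-dimensional scheme contained in $D$ imposes at most $\dim V[1]-\dim V[1](-D)=2\alpha-\alpha=\alpha$ conditions on $V[1]$ (Lemma \ref{a3}), and a second residuation with respect to $D$ annihilates it, so no further conditions can be extracted. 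Hence your scheme imposes at most $\alpha$ (trace) $+\ \alpha$ (residue in $D$) $=2\alpha$ conditions on the $3\alpha$-dimensional space $V[2]$. This makes part (b) unreachable and part (a) false for $(2n+1)a>2\alpha$, which is most of the stated range. Lemma \ref{y2} does not apply to the residue because its hypotheses require $Z_a$ to be a \emph{general} union of tangential schemes of $X$, not schemes sitting inside $D$; the proof of Lemma \ref{y2} itself consumes another Horace step with respect to $D$, which your residual schemes cannot survive.

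The paper avoids this by peeling off only one $\alpha$-layer per step: it specializes just $a_1=\lfloor\alpha/(2n-1)\rfloor$ of the tangential schemes onto $D$ (all of type $(2n-1,2)$, enough to nearly fill the trace $V[0]$), and leaves $Z_{a-a_1}$ as genuinely general tangential schemes of $X$ away from $D$. The residual problem for $V[1]$ is then $Z_{a-a_1}\cup\Res_D(W)$, where $Z_{a-a_1}$ is handled by Lemma \ref{y2} (which performs its own Horace step) and the $a_1$ tangent vectors $\Res_D(W)\subset D$ are absorbed by Lemma \ref{a3} at the cost of the auxiliary estimate $\dim V[0](-Z_{a-a_1})\le\max\{0,2\alpha-(2n+1)(a-a_1)-2a_1\}$ --- which is where the hypotheses $a_1\ge n+1$ and (for $n=3$) $a_1\ge 9-2\mu$ actually enter. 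Your diagnosis of the role of Lemma \ref{y2}(d) and of the $n=3$ hypothesis is therefore also misplaced: the constraints come from this $V[0]$ side condition and from ensuring $a-a_1$ lands in the range of Lemma \ref{y2}(a)--(b), not from fitting all $a$ schemes onto $D$.
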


\begin{proof}
The assumption $\alpha \ge (n+1)(2n-1)$ is equivalent to $a_1\ge n+1$. 

\quad (a) To prove part (a) it is sufficient to do the case $a= \lfloor \frac{3\alpha -\mu}{2n+1}\rfloor$ and in
particular we may assume $a\ge a_1$. We specialize
$Z_a$ to
$W_1:= Z_{a-a_1}\cup W$, where
$W$ is a general union of $a_1$ schemes of type $(2n-1,2)$ with respect to $D$. By $\pounds$ we have $\dim V[0](-W\cap D) =
\alpha -(2n-1)a_1$. Thus it is sufficient to prove that $V[1](-\Res _D(W_1))$ has the expected dimension. We have $\Res
_D(W_1) = Z_{a-a_1}\cup \Res _D(W)$ with $\Res _D(W)$ a general union of $a_1$ tangent vectors of $D$. Note that
$(2n+1)(a-a_1) \le 3\alpha -\mu -\alpha +\mu -2a_1 = 2\alpha -2a_1 $. Since $a_1\ge n+1$, we have $(2n+1)(a-a_1) \le 2\alpha -2n-2$.
Thus $a-a_1$ is not the unique integer, $2x_1+1$, which is not covered either in part (a) or in part (b) of Lemma \ref{y2}.
Since $(2n+1)(a-a_1) \le 2\alpha$, parts (a) and (b) of Lemma \ref{y2} give that $V[1](-Z_{a-a_1})$ has the expected
dimension, $2\alpha -(2n+1)(a-a_1)$. Since $D\cap Z_{a-a_1}=\emptyset$ and $\deg (\Res _D(W)) =2a_1\le \alpha$, to prove part (a) using Lemma \ref{a3} it is sufficient to check that $2a_1\le \alpha $
(true because $2n-1 \ge 2$ for $n\ge 3$) and that $\dim V[0](-Z_{a-a_1})\le \max \{0,2\alpha -(2n+1)(a-a_1) -2a_1\}$. By
$\pounds$ we  have
$\dim V[0](-Z_{a-a_1}) = \max \{0,\alpha -(2n-1)(a-a_1)\}$. Thus we may assume $a-a_1 \le a_1$. In this case we have $\dim
V[0](-Z_{a-a_1}) =\alpha -(2n-1)(a-a_1)$. Thus we conclude, unless $a\le 2a_1$ and $2\alpha -(2n+1)(a-a_1) -2a_1 \le -1
+\alpha -(2n-1)(a-a_1)$, i.e. $a\le 2a_1$ and $\alpha +1 \le 2(a-a_1) +2a_1 =2a$ and in particular $\alpha \le 4a_1-1$. We get
$(2n-1) < 4$ and hence $n=2$.

\quad (b) Now we prove part (b). By Remark \ref{stupid} it is sufficient to do the case $a =\lceil \frac{3\alpha +2n-1 -\mu}{2n+1}\rceil$. We
specialize
$Z_a$ to
$W_2:= Z_{a-a_1-1}\cup W'$, where
$W'$ is a general union of $a_1+1$ schemes of type $(2n-1,2)$ with respect to $D$. By $\pounds$ we have $V[0](-W\cap D)=0$ and so it
is sufficient to prove that $V[1](-\Res _D(W_2)) =0$. By Lemma \ref{a3} it is sufficient to prove that
$V[1](-Z_{a-a_1-1})$ has the expected dimension and that $V[0](-Z_{a-a_1-1}) =0$. For $V[1]$ we only need to check that
$a-a_1-1 \le 2x_1$ (Lemma \ref{y2}), i.e. $(2n+1)(a-a_1-1) \le 2\alpha$. Since $(2n+1)a\le 3\alpha +4n-1-\mu$, to check $V[1]$
we only need that $3\alpha +4n-1-\mu -2a_1-\alpha +\mu -2n-1 \le 2\alpha$, i.e. it is sufficient to have $n+1 \le a_1$. 
For $V[0]$ we need that either $\mu =0$ and $a-a_1\ge a_1$ or $a-a_1-1\ge a_1+1$. Since $a\ge ((6n-3)a_1+2\mu +2n-1)/(2n+1)$, we have $a\ge 2a_1+2$
if $(4n+2)a_1 + 4n+2 \le (6n-3)a_1+2\mu +2n-1$, which is true if $n\ge 4$ and $a_1\ge n+1$ or $n=3$ and $a_1\ge 9-2\mu$.
\end{proof}

\begin{lemma}\label{ny1}
Assume $n\ge 3$ and $\alpha \ge (n+1)(2n-1)$; if $n=3$ and $\mu >0$ assume $a_1\ge 9-2\mu$. Fix $z\in \NN$ and let $Z =Z_z\cup W\subset X$ with $W$ a general union of $e_1$ tangential schemes of $D$. Assume $(2n+1)z +(2n-1)e_1 \le 3\alpha -\epsilon$ with $\epsilon =3$ if $\mu =2$ and $\epsilon =0$ otherwise. Then $\dim V[2](-Z) =3\alpha -\deg (Z)$.
\end{lemma}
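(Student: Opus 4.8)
The plan is to prove the inequality $\dim V[2](-Z)\le 3\alpha-\deg(Z)$, since the opposite inequality $\dim V[2](-Z)\ge \dim V[2]-\deg(Z)=3\alpha-\deg(Z)$ is automatic and the hypothesis $\deg (Z)\le 3\alpha-\epsilon$ makes the right-hand side non-negative. Equivalently, I want to show that $Z$ imposes $\deg(Z)$ independent conditions on $V[2]$. The whole argument runs through the differential Horace method with respect to the divisor $D=Y\times\{o\}$, exactly as in the proofs of Lemmas \ref{y2} and \ref{y2+=}, using the two identities $V[2](-D)=V[1]$ and $V[2]_{|D}\cong V$ (of dimension $\alpha$). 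The new feature is the scheme $W$ of $e_1$ tangential schemes of $D$: since $W\subset D$ we have $\Res_D(W)=\emptyset$ and $W\cap D=W$, so the whole degree $(2n-1)e_1$ of $W$ is forced onto the $D$-restriction and cannot be moved to the residual.

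First I would specialise the $z$ general tangential schemes of $X$ in $Z_z$, degenerating $f_1-e_1$ of them to schemes of type $(2n-1,2)$ and $e_1$ of them to virtual schemes of type $(2,2n-1)$ with respect to $D$ (in the main case $z\ge f_1$ and $f_1\ge e_1$). After this specialisation the trace $Z\cap D$ is, together with $W$, a general union of $f_1$ tangential schemes of $D$ and $e_1$ tangent vectors of $D$. By the defining decomposition $\alpha=(2n-1)f_1+2e_1$, Assumption $\pounds$ (for the tangential schemes of $D$), Lemma \ref{a2} (for the tangent vectors) and Remark \ref{a1}, this union imposes exactly $\alpha$ independent conditions, i.e. $V[2]_{|D}(-(Z\cap D))=0$. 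Hence by the residual sequence \eqref{eq++ss10} and Lemma \ref{a3} the quantity $\dim V[2](-Z)$ is bounded above by $\dim V[1](-\Res_D(Z))$, where $\Res_D(Z)$ is a general union of the $z-f_1$ untouched tangential schemes of $X$, the $e_1$ tangential schemes of $D$ produced by the type $(2,2n-1)$ specialisations, and the $f_1-e_1$ tangent vectors of $D$ produced by the type $(2n-1,2)$ specialisations; note $\deg \Res_D(Z)=\deg(Z)-\alpha$.

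It then remains to prove that $\Res_D(Z)$ imposes independent conditions on $V[1]$, i.e. $\dim V[1](-\Res_D(Z))=2\alpha-(\deg(Z)-\alpha)=3\alpha-\deg(Z)$; combined with the trivial lower bound this gives equality. I would obtain this by a second differential Horace step with respect to $D$, now using $V[1](-D)=V[0]\cong V$: the $D$-parts of $\Res_D(Z)$ are handled by $\pounds$ and Lemma \ref{a2}, while the remaining residual consists only of tangential schemes of $X$, for which Lemmas \ref{y1} and \ref{y2} give the expected dimension. In effect the two-step descent $V[2]\to V[1]\to V[0]$ telescopes the degrees so that everything reduces to the already established pure statements.

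The hard part will be the numerology rather than the geometry. At each descent I must check that the relevant degree lands in a range where Lemma \ref{y2} applies; in particular I must avoid the single value $a=2x_1+1$ that Lemma \ref{y2} leaves open, and keep each inequality of the form $(2n+1)(\cdot)\le 2\alpha$ or $(2n+1)(\cdot)\le 3\alpha-\mu$ satisfied after the degree has been shifted by the residual steps. This is exactly what the hypotheses are calibrated for: $\alpha\ge(n+1)(2n-1)$ (equivalently $a_1\ge n+1$) provides the room, the correction $\epsilon=3$ absorbs the extra slack needed when $\mu=2$, and the extra assumption $a_1\ge 9-2\mu$ for $n=3$ compensates for the small value of $2n-1$. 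A case split according to the parity of $\mu$ (that is, whether $f_1\ge e_1$, and whether $z\ge f_1$) is also needed, since for $\mu$ odd the balancing of tangential schemes of $D$ against tangent vectors on $D$, and the number of tangential schemes of $X$ available to degenerate, both change slightly.
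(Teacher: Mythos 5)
Your proposal follows essentially the same route as the paper's proof: the same specialisation of $f_1$ of the tangential schemes of $X$ into $f_1-e_1$ schemes of type $(2n-1,2)$ and $e_1$ virtual schemes of type $(2,2n-1)$ with respect to $D$, so that the trace together with $W$ has degree exactly $\alpha$ and kills $V[0]$ by $\pounds$, followed by the same splitting of the residual via Lemma \ref{a3} into the part lying in $D$ and the untouched $Z_{z-f_1}$, which is then handled by Lemmas \ref{y1} and \ref{y2}. What you defer --- verifying $z-f_1\le 2\lfloor\alpha/(2n+1)\rfloor$ and the bound on $\dim V[0](-Z_{z-f_1})$ --- is precisely the numerology the paper carries out at the end of its argument, and you correctly identify where each hypothesis ($a_1\ge n+1$, the correction $\epsilon=3$ when $\mu=2$, and the extra condition for $n=3$) enters.
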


\begin{proof}
Note that $\mu =0$ if and only if $e_1=0$ and that $\mu =2$ if and only if $e_1=1$. By Lemma \ref{y2+=} we may assume $\mu \ne 0$. By Remark \ref{stupid} we may assume $x\ge f_1$. To prove part (a) we specialize $Z_z$ to $Z_{z-f_1}\cup A$. Since $\deg (D\cap (A\cup W)) =\alpha$, $V[0](- D\cap (A\cup  W))=0$ and $\deg (\Res _D(A)) \le \alpha$, by Lemma \ref{a3} it is sufficient to prove that $V[1](-Z_{z-f_1}) = 2\alpha -(2n+1)(z-f_1)$ and that $\dim V[0](-Z_{z-f_1}) 
\le \max \{0, 2\alpha -\deg (Z_{z-f_1}\cup \Res _D(A))\}$. By Lemma \ref{y2} to prove that $V[1](-Z_{z-f_1}) = 2\alpha -(2n+1)(z-f_1)$ it is sufficient to prove that
$z-f_1 \le 2\lfloor \alpha /(2n+1)\rfloor$. Since $(2n+1) +2(2n+1) \lfloor \alpha /(2n+1)\rfloor \ge (2n+1) +2(2n+1) (\alpha -2n)/(2n+1) =2\alpha -2n$,  it is sufficient to prove that $(2n+1)z -(2n+1)f_1  \le 2\alpha -2n$, i.e. $(2n+1)z \le (6n-3)f_1 +4e_1-2n$.
By assumption we have $(2n+1)z +(2n-1)e_1 \le (6n-3)f_1 +6e_1-\epsilon$, i.e. $(2n+1)z \le (6n-3)f_1+(7-2n)e_1  -\epsilon$. We have $(2n-3)e_1 \ge 2n-\epsilon$ because either $e_1\ge 2$ or $e_1=1 =\mu /2$.

We have $V[0](-Z_{z-f_1}) $ if either $z-f_1\ge a_1+1$ or $\mu =0$ and $z-f_1 \ge a_1$. However, if $z-f_1\le a_1$, we have $\dim  V[0](-Z_{z-f_1}) = \alpha -(2n-1)(z-f_1)$
and so to handle $V[0]$ we may assume $z-f_1 \le a_1$ and use that  $\alpha \ge 2(z-f_1) + 2f_1$ (this is true, because $n\ge 3$). 
\end{proof}

\begin{lemma}\label{y3}
Assume $n \ge 3$ and $\alpha \ge 4n^2  +2n -4$. For any
$a\in
\NN$
$Z_a$ has the expected dimension with respect to
$V[3]$.
\end{lemma}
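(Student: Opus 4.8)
The statement to prove is that $\dim V[3](-Z_a)=\max\{0,4\alpha-(2n+1)a\}$ for every $a$, since $\dim V[3]=4\alpha$ and a tangential scheme of $X$ has degree $2n+1$. As $\dim V[3](-Z_a)\ge\max\{0,4\alpha-(2n+1)a\}$ always holds and the quantity is non-increasing in $a$ (arranging $Z_a\subset Z_{a+1}$), a subscheme of a scheme imposing independent conditions again imposes independent conditions, so by Remark \ref{stupid} it suffices to settle two boundary values of $a$: the largest $a$ with $(2n+1)a\le 4\alpha$, where I must show that $Z_a$ imposes independent conditions, and the smallest $a$ with $(2n+1)a\ge 4\alpha$, where I must show $V[3](-Z_a)=0$.

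The engine is the differential Horace lemma applied to $D=Y\times\{o\}$, peeling off $D$ twice to descend $V[3]\to V[2]\to V[1]$, using at each step the identifications $V[t]_{|D}\cong V$ (via $\pi_{1|D}$) and $V[t](-D)=V[t-1]$. I specialize the tangential schemes of $X$ to the virtual types $(2n-1,2)$ and $(2,2n-1)$ introduced before Remark \ref{stu1}: a scheme of type $(2n-1,2)$ puts a tangential scheme of $D$ in $D\cap Z$ and a tangent vector of $D$ in $\Res_D(Z)$, and a scheme of type $(2,2n-1)$ does the reverse. For the subabundant boundary I take roughly $x_1$ schemes of each type (with $x_1=\lfloor\alpha/(2n+1)\rfloor$, $\tau_1=\alpha-(2n+1)x_1$ as in Lemma \ref{y2}), so that each of the two peelings absorbs a copy of $V\cong V[t]_{|D}$: the first peeling fills $V$ with $\approx x_1$ tangential schemes and $\approx x_1$ tangent vectors of $D$, and the tangent vectors produced in the residual, together with the tangential schemes of $D$ coming from the $(2,2n-1)$ residuals, fill $V$ again at the second peeling. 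That both fills are genuinely independent follows from Assumption $\pounds$ combined with Lemma \ref{a3}; the second peeling is exactly the content packaged in Lemmas \ref{ny1} and \ref{y2+=}, and the remaining $Z_z$ with $z\approx a-2x_1$ lands on $V[1]$ and is controlled by Lemma \ref{y2}. A direct computation using $\alpha=(2n-1)f_1+2e_1=(2n+1)x_1+\tau_1$ shows that the numbers of conditions imposed at the $V$-, $V$- and $V[1]$-levels sum to exactly $4\alpha-(2n+1)a$. The superabundant boundary is symmetric: one overfills both copies of $V$ and makes the $V[1]$-residual superabundant, concluding with the vanishing statements Lemma \ref{y2}(b)--(c) and Lemma \ref{y2+=}(b).

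The main obstacle is the boundary bookkeeping. When $a$ sits at the transition $(2n+1)a\approx 4\alpha$, the descended $Z_z$ lands near $z=2x_1$ on $V[1]$, i.e.\ precisely at the value that Lemma \ref{y2} leaves uncovered between its parts (a) and (b); this is why the estimate of Lemma \ref{y2}(d) and a careful treatment of the remainders $\tau_1$ and $\mu$ are needed, and why the stray tangent vectors of $D$ from the $(2n-1,2)$ specialization must be routed into the second fill rather than into $V[1]$. Keeping every intermediate scheme inside the ranges of Lemmas \ref{y2}, \ref{y2+=} and \ref{ny1} — in particular not overshooting the degree bound $(2n+1)z+(2n-1)e_1\le 3\alpha-\epsilon$ of Lemma \ref{ny1} — is exactly what the hypothesis $\alpha\ge 4n^2+2n-4=2(n+1)(2n-1)-2$ secures: it gives twice the room of the $V[2]$-lemmas, so that after filling $V$ once the residual still satisfies $\alpha\ge(n+1)(2n-1)$, and for the delicate value $n=3$ it forces $a_1=\lfloor\alpha/(2n-1)\rfloor\ge 7\ge 9-2\mu$, which is precisely the extra hypothesis demanded by Lemmas \ref{y2+=} and \ref{ny1}. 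I expect the residual casework — the parity split defining $(f_1,e_1)$, the small remainders, and verifying $a\ge 2x_1$ at the relevant boundary values of $a$ — to be routine once these inequalities are in force.
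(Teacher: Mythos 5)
Your skeleton is the paper's: reduce to the two boundary values of $a$, apply differential Horace with respect to $D=Y\times\{o\}$ using the virtual types $(2n-1,2)$ and $(2,2n-1)$, descend $V[3]\to V[2]\to V[1]$, and close with Lemmas \ref{ny1}, \ref{y2+=} and \ref{y2}. But the specific specialization you propose does not make the condition count close, and this is not a bookkeeping detail one can wave at. Taking $x_1$ schemes of each type gives a first trace of degree $(2n-1)x_1+2x_1=(2n+1)x_1=\alpha-\tau_1$, and recycling the residuals ($x_1$ tangent vectors plus $x_1$ tangential schemes of $D$) as the second trace gives degree $(2n+1)x_1=\alpha-\tau_1$ again. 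Each peel therefore absorbs only $\alpha-\tau_1$ conditions, not ``a copy of $V$'', and the accumulated deficiency is $2\tau_1$. At the subabundant boundary the available slack is $4\alpha-(2n+1)a=4\tau_1 \bmod (2n+1)$, which is smaller than $2\tau_1$ for a whole range of residues (e.g.\ $n=3$, $\tau_1\in\{2,3\}$); in those cases your specialized scheme has both $h^0\ne 0$ and $h^1\ne 0$ and semicontinuity gives nothing. Moreover, with the recycling scheme it is arithmetically impossible to make both traces equal to $\alpha$ unless $(2n+1)\mid\alpha$, since the two trace degrees are $(2n-1)p+2q$ and $2p+(2n-1)q$. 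So your claim that the three levels ``sum to exactly $4\alpha-(2n+1)a$'' is false for the choice you describe. A second, related problem: your second trace contains $x_1\approx\alpha/(2n+1)$ tangential schemes of $D$, whereas Lemma \ref{ny1} only accepts $e_1\le 2n-2$ of them, so the lemma you cite as ``packaging the second peeling'' does not apply to your configuration.

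The paper avoids both problems by splitting asymmetrically: it specializes $f_1$ schemes of type $(2n-1,2)$ and $e_1$ of type $(2,2n-1)$, where $(2n-1)f_1+2e_1=\alpha$ and $0\le e_1\le 2n-2$, so the first trace has degree exactly $\alpha$ and kills $V[3]_{|D}\cong V$ outright. The $f_1$ residual tangent vectors of $D$ are then \emph{discarded} via Lemma \ref{a3} (at the cost of the bound $\dim V[1](-Z_{a-f_1-e_1})\le \max\{0,3\alpha-(2n+1)(a-f_1-e_1)-2e_1\}$, which is where Lemma \ref{y2}(d), the uncovered value $2x_1+1$, and the inequality $f_1\ge e_1+2$ coming from $\alpha\ge 4n^2+2n-4$ enter), not routed into the second fill; the second peel happens inside Lemma \ref{ny1} with a \emph{fresh} batch of $f_1$ schemes of type $(2n-1,2)$ drawn from $Z_{a-f_1-e_1}$, whose trace together with the $e_1$ leftover tangential schemes of $D$ overfills $V$. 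If you replace your $(x_1,x_1)$ split and recycling by this $(f_1,e_1)$ split plus the Lemma \ref{a3} discard, the rest of your outline goes through; as written, the count has a genuine hole of size up to $2\tau_1$.
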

\begin{proof}
By Remark \ref{stupid} it is sufficient to do the cases $a\in \{\lfloor 4\alpha/(2n+1)\rfloor ,\lceil 4\alpha/(2n+1)\rceil \}$ and in
particular we may assume $f_1+e_1 \le a \le (4[(2n-1)f_1+2e_1]+2n)/(2n+1)$. We specialize
$Z_a$ to
$Z_{a-f_1-e_1}\cup W\cup W'$ with $W$ a general union of $f_1$ schemes of type $(2n-1,2)$ with respect to $D$ and $e_1$
schemes of type
$(2,2n-1)$ with respect to $D$. By $\pounds$ and Lemma \ref{a2} it is sufficient to prove that $Z_{a-f_1-e_1}\cup \Res _D(W)
\cup \Res _D(W')$ imposes independent conditions to $V[2]$. Since $\deg (D\cap (Z_{a-f_1-e_1}\cup \Res _D(W)
\cup \Res _D(W')))\le \alpha$, by Lemma \ref{a3} it is sufficient to prove that
$\dim V[2](-(Z_{a-e_1-f_1}\cup \Res _D(W'))) = 3\alpha -(2n+1)(a-e_1-f_1) -(2n-1)e_1$ and that $\dim V[1](-Z_{a-e_1-f_1}) \le
\max
\{0,3\alpha -(2n+1)(a-e_1-f_1) -2e_1
\}$. To prove the assertion about
$V[2]$  using Lemma \ref{ny1} it is sufficient to prove that $\deg (Z_{a-f_1-e_1}\cup \Res _D(W)) \le 3\alpha -\epsilon$. Since $Z_{a-f_1-e_1}\cup \Res _D(W)$ has degree
$(2n+1)a +\alpha -2f_1$ when $\epsilon =0$ it is sufficient to observe that $(2a+1)a\le 4\alpha +2n$ and that $f_1\ge n$. Now assume $\epsilon =3$, i.e. $\mu =2$ and $e_1=1$.
In this case $f_1=a_1 =(\alpha -2)/(2n-1) \ge n+3$.

Now we consider $V[1]$. Since $\dim V[2]-\dim V[1] =\alpha$, to conclude by Lemma \ref{a3} it is sufficient to prove that $\dim V[1](-Z_{a-e_1-f_1}) \le \dim V[2] - \deg (Z_{a-f_1-e_1}\cup \Res _D(W)
\cup \Res _D(W'))$. We have $\deg (Z_{a-f_1-e_1}\cup \Res _D(W)
\cup \Res _D(W')) =(2n+1)a -\alpha$. We are done if either $V[1](-Z_{a-e_1-f_1}) =0$ or $\dim V[1](-Z_{a-e_1-f_1}) = 2\alpha \deg (Z_{a-e_1-f_1})$. Thus by Lemma \ref{y2}
we may assume $a-e_1-f_1 = 2x_1+1$ and $\dim V[1](-Z_{a-e_1-f_1}) \le \max \{0,\tau _1-2\}$. It is sufficient to have $\alpha \ge \max \{0,\tau _1-2\} + 2f_1+(2n-1)e_1$,
i.e. $(2n-3)f_1\ge (2n-3)e_1+ \max \{0,\tau _1-2\} $. Since $\tau _1 \le 2n$, it is sufficient to have $(2n-3)f_1\ge (2n-3)e_1 +2n-2$. Since $n\ge 3$, it is sufficient to have $f_1\ge e_1+2$. Since $e_1\le 2n-2$, it is sufficient to assume $\alpha \ge (2n-1)2n +2(2n-2) = 4n^2  +2n -4$. \end{proof}

\begin{lemma}\label{y4}
Take the assumptions of Lemma \ref{y3}. Let $B\subset D$ be a general union of $e_1$ tangential schemes of $D$. For any $a\in \NN$ $Z_a\cup B$ gives the expected number of conditions to $V[3]$.
\end{lemma}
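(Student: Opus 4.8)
The plan is to run the differential Horace scheme of Lemmas \ref{ny1} and \ref{y3} one level higher, using the divisor $D$ and the fact that $\dim V[3]_{|D}=\alpha$. By Remark \ref{stupid} it is enough to treat the two critical values of $a$: the largest $a$ with $4\alpha-(2n+1)a-(2n-1)e_1\ge 0$ (the full-rank case) and the first $a$ with $4\alpha-(2n+1)a-(2n-1)e_1<0$ (the vanishing case).

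For the full-rank case I would specialize $Z_a$ to $Z_{a-f_1}\cup A$, where $A$ is a general union of $f_1-e_1$ schemes of type $(2n-1,2)$ and $e_1$ schemes of type $(2,2n-1)$ with respect to $D$ (note $f_1-e_1\ge 2$ since $n\ge 3$); call $\tilde Z$ the resulting specialization of $Z_a\cup B$. Then $A\cap D$ is a general union of $f_1-e_1$ tangential schemes and $e_1$ tangent vectors of $D$, so, together with $B$, the trace $\tilde Z\cap D$ becomes a general union of $f_1$ tangential schemes and $e_1$ tangent vectors of $D$, of degree exactly $(2n-1)f_1+2e_1=\alpha=\dim V[3]_{|D}$. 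By Assumption $\pounds$ and Lemma \ref{a2} this trace imposes $\alpha$ independent conditions, so the restriction to $D$ vanishes and $\dim V[3](-\tilde Z)=\dim V[2](-\Res_D(\tilde Z))$, with $\Res_D(\tilde Z)=Z_{a-f_1}\cup(f_1-e_1\text{ tangent vectors})\cup(e_1\text{ tangential schemes of }D)$. Writing this residual as $E\cup T$ with $E:=Z_{a-f_1}\cup(e_1\text{ tangential schemes})$ and $T$ the $f_1-e_1$ tangent vectors of $D$, I would compute $\dim V[2](-E)$ by Lemma \ref{ny1} and peel $T$ off by Lemma \ref{a3}, using $\dim E_{|D}=\alpha-(2n-1)e_1\ge 2(f_1-e_1)$; a short bookkeeping then gives $\dim V[2](-\Res_D(\tilde Z))=4\alpha-(2n+1)a-(2n-1)e_1$. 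The key point is that the hypothesis of Lemma \ref{ny1} for $E$ is implied by subcriticality, since the gap between the two relevant thresholds is $2(f_1-e_1)-\epsilon\ge 0$, so a single specialization covers the entire full-rank range. Finally semicontinuity gives $\dim V[3](-(Z_a\cup B))\le \dim V[3](-\tilde Z)$, and combined with the trivial lower bound this yields equality.

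For the vanishing case I would instead run the parallel over-filling specialization in the spirit of Lemma \ref{y2+=}(b): replace one off-$D$ tangential scheme of $X$ by an extra scheme of type $(2n-1,2)$, so that the trace on $D$ overfills $V[3]_{|D}$ and again kills the restriction, reducing to the vanishing of $V[2](-\Res_D(\tilde Z))$; iterating the Horace descent down through $V[2]$ and $V[1]$ via Lemma \ref{a3}, Lemma \ref{ny1} and Lemma \ref{y2} then finishes.

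The hard part will be the vanishing case in the numerical corner where $\alpha$ is near its minimum $4n^2+2n-4$ and $e_1$ is maximal (so that $f_1-e_1$ is as small as $2$): there Lemma \ref{ny1} may fail to apply at the first supercritical $a$, and one must check by hand that the descending Horace still bottoms out at an empty residual, or a residual of non-positive expected degree, while also tracking the correction $\epsilon=3$ that occurs when $\mu=2$ (equivalently $e_1=1$). This is precisely the delicate endgame already encountered in the proofs of Lemmas \ref{ny1} and \ref{y3}; I expect it to be routine but to require care in the finitely many small cases.
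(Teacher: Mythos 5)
Your specialization is exactly the one the paper uses: replace $f_1$ of the tangential schemes by $f_1-e_1$ schemes of type $(2n-1,2)$ and $e_1$ schemes of type $(2,2n-1)$ with respect to $D$, so that together with $B$ the trace on $D$ has degree $\alpha$ and is killed by Assumption $\pounds$, and then handle the residual $Z_{a-f_1}\cup(\text{tangent vectors})\cup(e_1\ \text{tangential schemes of}\ D)$ by peeling off the tangent vectors with Lemma \ref{a3} and invoking Lemma \ref{ny1} for $V[2]$, exactly as in the proof of Lemma \ref{y3}. This is the paper's argument (the paper states only the subcritical case explicitly and says ``continue as in the proof of Lemma \ref{y3}''); your only slip is attributing $f_1-e_1\ge 2$ to $n\ge 3$ rather than to the hypothesis $\alpha\ge 4n^2+2n-4$, which is where it actually comes from.
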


\begin{proof}
First assume $(2n+1)a +(2n-1)e_1\le 4\alpha$. By Remark \ref{stupid} we may assume $(2n+1)a \ge 4\alpha -2n$. We specialize $Z_a\cup B$ to $Z_{a-f_1-e_1}\cup W\cup W' \cup B$
with $W$ a general union of $f_1-e_1$ schemes of type $(2n-1,2)$ with respect to $D$ and $e_1$ schemes of type $(2,2n-1)$ with respect to $D$.
By $\pounds$ and Lemma \ref{a2} it is sufficient to prove that $Z_{a-f_1}\cup \Res _D(W) \cup \Res _D(W')$ imposes independent conditions to $V[2]$.
Then we continue as in the proof of Lemma \ref{y3}.\end{proof}

We will prove simultaneously Theorem \ref{i2} and the next lemma.

\begin{lemma}\label{y5}
Fix an integer $t\ge 3$. Let $B\subset D$ be a general union of $e_1$ tangential schemes of $D$. For any $a\in \NN$ $Z_a\cup B$ gives the expected number of conditions to $V[t]$.
\end{lemma}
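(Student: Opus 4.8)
The plan is to prove Lemma~\ref{y5} by induction on $t$, carried out simultaneously with its ``no-$B$'' companion, the statement that $Z_a$ alone gives the expected number of conditions to $V[t]$; the latter specializes, at $t=\dim W-1$ (so that $V[t]=V\boxtimes W$), to Theorem~\ref{i2}. The base cases are in hand: the companion statement for $t\le 3$ is Lemmas~\ref{y1}, \ref{y2}, \ref{y2+=} and~\ref{y3}, the with-$B$ statement for $t=2$ is Lemma~\ref{ny1}, and $t=3$ of Lemma~\ref{y5} is Lemma~\ref{y4}. So fix $t\ge 4$, assume both statements for all smaller values, and prove them for $t$. By Remark~\ref{stupid} it suffices to treat the critical values of $a$ near $(2n+1)a+(2n-1)e_1=(t+1)\alpha$; and since $\dim V[t](-Z)$ is always at least the expected value, semicontinuity lets me replace $Z=Z_a\cup B$ by one convenient special configuration on which the expected value is attained.

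First I would specialize onto $D$. As $\dim V[t]_{|D}=(t+1)\alpha-t\alpha=\alpha$, I want the trace on $D$ to have degree exactly $\alpha$ and to be a general union of tangential schemes and tangent vectors of $D$, so that Assumption~$\pounds$ together with Lemma~\ref{a2} gives $V[t]_{|D}(-Z\cap D)=0$. For Lemma~\ref{y5} I move $f_1-e_1$ of the tangential schemes of $X$ onto $D$ as schemes of type $(2n-1,2)$ and $e_1$ of them as schemes of type $(2,2n-1)$; together with $B$, whose $e_1$ tangential schemes already lie on $D$, the trace becomes a general union of $f_1$ tangential schemes and $e_1$ tangent vectors of $D$, of degree $(2n-1)f_1+2e_1=\alpha$. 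The restriction map to $V[t]_{|D}$ then vanishes, so the residual sequence~\eqref{eq++ss10} yields $\dim V[t](-Z)=\dim V[t-1](-\Res_D(Z))$ with $\Res_D(Z)=Z_{a-f_1}\cup B'\cup T$, where $B'$ is a general union of $e_1$ tangential schemes of $D$ (the residues of the schemes of type $(2,2n-1)$) and $T$ is a general union of $f_1-e_1$ tangent vectors of $D$ (the residues of the schemes of type $(2n-1,2)$). For the companion statement one specializes $f_1$ schemes of type $(2n-1,2)$ instead, so that the residual reads $Z_{a-f_1-e_1}\cup B'\cup T$ with $T$ a union of $f_1$ tangent vectors; the argument is otherwise identical.

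Now $Z_{a-f_1}\cup B'$ is exactly of the shape controlled by Lemma~\ref{y5} at level $t-1$, so by induction $U:=V[t-1](-(Z_{a-f_1}\cup B'))$ has the expected dimension. It remains to impose the tangent vectors $T\subset D$, and here I invoke Lemma~\ref{a3}, which gives $\dim U(-T)=\max\{\dim U(-D),\,\dim U-2(f_1-e_1)\}$. Since $U(-D)=V[t-2](-Z_{a-f_1})$ (the residual of $B'$ with respect to $D$ being empty), the companion statement at level $t-2$ evaluates $\dim U(-D)$ in both regimes, including the surjective one where it must vanish; a comparison of degrees then shows $\dim U(-D)\le\max\{0,\dim U-2(f_1-e_1)\}$, so that $\dim U(-T)$, hence $\dim V[t](-Z)$, equals the expected value. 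The two inequalities needed for this comparison are $(2n-1)f_1\ge(2n-3)e_1$ and $(2n-3)f_1\ge(2n-5)e_1$, both immediate from $n\ge 3$ and $f_1\ge e_1$; the hypothesis $\alpha\ge 4n^2+2n-4$ in fact guarantees $f_1\ge e_1+2$ (as $e_1\le 2n-2$ forces $f_1\ge 2n$), leaving ample slack.

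The main obstacle is not any single step but keeping every auxiliary scheme general and of the prescribed type through the two residuations, and in particular ensuring that the level $t-2$ contribution $V[t-2](-Z_{a-f_1})$ is dominated by the room left after imposing $U$. It is precisely here that running the companion ``no-$B$'' induction alongside Lemma~\ref{y5} pays off: the companion statement supplies the exact value of $V[t-2](-Z_{a-f_1})$ in the surjective regime (where no $B$ is present to force vanishing), which the with-$B$ statement alone could not deliver through Remark~\ref{stupid}. Once the inductive step is in place for both statements, Theorem~\ref{i2} is the case $t=\dim W-1$ of the companion statement.
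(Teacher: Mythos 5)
Your proposal follows essentially the same route as the paper's proof: a simultaneous induction on $t$ of Lemma \ref{y5} and its no-$B$ companion (Theorem \ref{i2}), with base case $t=3$ given by Lemmas \ref{y3} and \ref{y4}, the same specialization of $f_1-e_1$ tangential schemes to type $(2n-1,2)$ and $e_1$ to type $(2,2n-1)$ so that together with $B$ the trace on $D$ has degree exactly $\alpha$, and the same combination of the level-$(t-1)$ instance of Lemma \ref{y5} with Lemma \ref{a3} and the level-$(t-2)$ companion statement to absorb the leftover tangent vectors. The only point where you are lighter than the paper is the $t=4$ step, where $V[2]$ is controlled by Lemmas \ref{y2+=} and \ref{ny1} (which leave a small uncovered range of $a$) and the paper verifies the required numerical inequalities explicitly; your degree comparison is of the same kind and does go through.
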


\begin{proof}[Proof of Theorem \ref{i2} and Lemma \ref{y5}:]
The theorem (resp. Lemma \ref{y5}) is true for $t=3$ by Lemma \ref{y3} (resp. Lemma \ref{y4}). Thus we may assume $t\ge 4$
and that both results holds for $V[x]$ for all $x$ such that $3\le x\le t-1$. Fix $a\in \NN$.

Take first the set-up of Theorem \ref{i2}. We specialize
$Z_a$ to a general $M:= Z_{a-f_1-e_1}\cup W\cup W'$ with $W$ a general union of $f_1$ schemes of type $(2n-1,2)$ with respect to
$D$ and $W'$ a general union of $e_1$ schemes of type $(2,2n-1)$ with respect to $D$. We have $\deg (M\cap D)=\alpha$. By $\pounds$ and Lemma \ref{a2} we have $V[0](-M\cap D)=0$. Thus it is sufficient to prove
that $Z_{a-f_1-e_1}\cup \Res _D(W)\cup \Res _D(W')$ imposes $\min \{t\alpha, \deg (Z_{a-f_1-e_1}\cup \Res _D(W)\cup \Res
_D(W'))\}$
conditions to $V[t-1]$. Note that $(t+1)\alpha -\deg (Z_a) =t\alpha -\deg (Z_{a-f_1-e_1}\cup \Res _D(W)\cup \Res _D(W'))$. Since
$\Res _D(W)$ is a general union of $f_1$ tangent vectors of $D$ and $\deg (D\cap  \Res _D(W)\cup \Res _D(W')) \le \alpha$, by Lemma \ref{a3} it is sufficient to prove that
$Z_{a-f_1-e_1}\cup \Res _D(W')$ gives the expected number of conditions to $V[t-1]$ (true by the inductive assumption for
Lemma \ref{y5}) and that $\dim V[t-2](-Z_{a-f_1-e_1}) \le \max \{0,t\alpha -(2n+1)(a-f_1-e_1) -(2n-1)e_1 -2f_1\}$.
If $t\ge 5$ we use the inductive assumption of the theorem
and that $\alpha \ge (2n-1)e_1 +2f_1$, i.e. $f_1\ge e_1$ (true because $e_1\le 3n-3$ and $\alpha \ge (2n+1)(4n-3)$). Now assume $t=4$.  We have $\deg (Z_a) -\deg (Z_{a-f_1-e_1}) = \alpha +(2n-1)e_1+2f_1$. If $(2n+1)a \ge 5\alpha = (10n-5)f_1 + 10e_1$ we need to prove $V[2](-Z_{a-f_1-e_1}) =0$ and so it is sufficient to have
$(2n+1)(a-f_1-e_1) \ge 3\alpha +2n-1-\mu = (6n-3)f_1+6e_1+2n-1-\mu$. Hence it is sufficient to have $(2n-3)f_1\ge (2n-5)e_1 +2n-1-\mu$. Since $2f_1\ge 2n$, it is sufficient to have $f_1\ge e_1$, which (since $e_1\le 2n-2$) it is true if $\alpha \ge (2n+1)(2n-2)$.

Now  we handle Lemma \ref{y5}. We specialize $B\cup Z_a$ to $M':= B\cup Z_{a-f_1+e_1}\cup W_1 \cup W_2$ with $W_1$
a general union of $f_1-e_1$ schemes of type $(2n-1,2)$ with respect to $D$ and $W'$ a general union of  $e_1$ schemes of type
$(2,2n-1)$ with respect to $D$. Taking $\Res _D$ we land in the case done for the proof of the theorem, except that $a$ may be
different and we only have $f_1-e_1$ tangent vectors. For $t=4$ to handle $V[2]$ we need the following computations. 

First assume $(2n+1)a +(2n-1)e_1 \ge 5\alpha$, i.e. $(2n+1)a \ge (10n-5)f_1+ (11-2n)e_1$. We we have to prove that
$V[2](Z_{a-f_1+e_1}) =0$. By Lemma \ref{y2} it is sufficient to prove that $(2n+1)(a-f_1+e_1) \ge   (6n-3)f_1 + 6e_1 +2n+2$.
Thus it is sufficient to have $(10n-5)f_1+(11-2n)e_1 \ge (8n-4)f_1 +(5-2n)e_1+2n+2$, i.e. $(2n-1)f_1 +6e_1\ge 2n+2$, which is
obviously true. If $(2n+1)a < 5\alpha$, it is sufficient to do the case with $(2n+1)a \ge 5\alpha -2n$. Use that
$(2n-1)f_1 +6e_1 \ge 4n+2$.\end{proof}

\begin{proof}[Proof of Corollary \ref{i2.0}:] A stronger statement is true for $n=2$ by \cite{co}. The case $n>2$ follows by induction on
$n$ by Theorem \ref{i2} applied to $Y:= (\PP^1)^{n-1}$, $V:= H^0(\Oo _{(\PP^1)^{n-1}}(d_1,\dots ,d_{n-1}))$.
\end{proof}

\begin{proof}[Proof of Corollary \ref{i2.1}:]
By \cite{av} and Theorem \ref{i2} it is sufficient to check when $(t+1)\binom{m+d}{m} \ge 4(m+1)^2+2(m+1)-4 = 4m^2+10m+2$.
\end{proof}

\section{$\PP^m\times \PP^1$, $\Oo (x,3)$, $x\le 3$, $m\le 4$}
In this section we prove the following proposition.

\begin{proposition}\label{x1}
The tangential variety of $\PP^m\times \PP^1$, $m\le 4$, with respect to $|\Oo _{\PP^m\times \PP^1}(x,3)|$ is not defective for $x =2,3$. 
\end{proposition}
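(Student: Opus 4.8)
The plan is to verify non-defectivity in the usual cohomological form: for every $b\in\NN$ a general union $Z_b$ of $b$ tangential schemes of $X=\PP^m\times\PP^1$ must satisfy $h^0(\Ii_{Z_b}\otimes\Oo(x,3))=0$ or $h^1(\Ii_{Z_b}\otimes\Oo(x,3))=0$. Since $h^1(\Oo(x,3))=0$ and $h^0(\Oo(x,3))=4\binom{m+x}{m}$, Remark \ref{stu1} (with $n=m+1$) restricts this to the few values of $b$ with $b(2m+3)$ in the window $[\,h^0-n,\,h^0+2n\,]$; concretely one is reduced to one or two critical $b$ for each of the six pairs $(m,x)\in\{2,3,4\}\times\{2,3\}$, so the proposition is a finite check.

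For each critical $b$ I would run the differential Horace degeneration of Section \ref{Sy} with respect to $D=\PP^m\times\{o\}$, $o\in\PP^1$ general. This is exactly the $V[3]$ situation with $Y=\PP^m$, $V=H^0(\Oo_{\PP^m}(x))$, $\alpha=\binom{m+x}{m}$, and $W=H^0(\Oo_{\PP^1}(3))$ of dimension $4$. Repeated restriction peels off the four $\PP^1$-levels of degrees $3,2,1,0$, each landing in $\Oo_{\PP^m}(x)$ on $D\cong\PP^m$; by Lemma \ref{a3} the problem becomes one of showing that a suitable mixed union of tangential schemes and tangent vectors of $\PP^m$ imposes the expected number of conditions on $|\Oo_{\PP^m}(x)|$ at each level, the distribution being controlled by the schemes of type $(2n-1,2)$, $(2,2n-1)$ and, crucially here, $(n,n,1)$. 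The base inputs are the non-defectivity of the tangential variety of $v_x(\PP^m)$ (Assumption $\pounds$), available from \cite{av}, together with Lemma \ref{a2} and \cite{ah2} for the tangent vectors and $2$-points produced in the intermediate steps.

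The reason these cases are excluded from Corollary \ref{i2.1} is that the numerical hypothesis $\alpha\ge 4n^2+2n-4$ of Lemmas \ref{y3} and \ref{y5} fails in all six cases (for instance $(m,x)=(2,3)$ gives $\alpha=10<38$), so those lemmas cannot be quoted and each distribution must be checked by hand with the explicit small values of $f_1,e_1$ read off from $\alpha=(2n-1)f_1+2e_1$. For the four pairs $(2,2),(3,2),(3,3),(4,3)$ this is routine: $\pounds$ holds there and the peeling goes through after adjusting the balancing of $f_1$ and $e_1$ to the small numerology.

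The hard part is the two pairs where $\pounds$ itself fails. For $(m,x)=(2,3)$ two tangential schemes of $\PP^2$ do not fill $|\Oo_{\PP^2}(3)|$ — this is the exceptional case $(2,3,0,2)$ of Theorem \ref{uu1} — and for $(m,x)=(4,2)$ we have $\tau(v_2(\PP^4))=\sigma_2(v_2(\PP^4))$, so $\sigma_2(\tau(v_2(\PP^4)))=\sigma_4(v_2(\PP^4))$ is the rank-$\le 4$ determinantal hypersurface and is defective; moreover for $(4,2)$ the base system is $\Oo_{\PP^4}(2)$, whose $2$-point secants are themselves all defective. Thus the naive specialization, which would place two tangential schemes (or, after using type $(n,n,1)$, two $2$-points) of $\PP^m$ on a single $\Oo_{\PP^m}(x)$-level, breaks. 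I would handle $(2,3)$ by choosing the level distribution so that no $\Oo_{\PP^2}(3)$-level ever receives the defective configuration, spreading the excess via type-$(n,n,1)$ schemes into $2$-points and tangent vectors and using the non-defectivity of $v_3(\PP^2)$ together with Lemma \ref{a2}. The genuinely delicate case is $(4,2)$, where the quadric nature of $\Oo_{\PP^4}(2)$ obstructs every clean distribution; here I expect to fall back on a direct verification of the one or two critical groups $h^0(\Ii_{Z_b}\otimes\Oo(2,3))$ by an explicit specialization of $Z_b$ (feasible because $m\le 4$ and the degrees are tiny), which is the real obstacle of the proof.
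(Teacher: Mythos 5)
Your overall framework (Remark \ref{stu1} to isolate the critical values of $b$, then a differential Horace degeneration with respect to a divisor) is the same as the paper's, but you have chosen the other ruling of $\PP^m\times\PP^1$ as your divisor, and for these small bidegrees that choice is not a repairable detail. Peeling off the $\PP^1$-levels with $D=\PP^m\times\{o\}$ forces every restriction step into $|\Oo_{\PP^m}(x)|$ with $x\le 3$, exactly where the postulation of tangential schemes and $2$-points is riddled with exceptions. Your accounting of those exceptions is already incomplete: besides $(m,x)=(2,3)$ and $(4,2)$, Assumption $\pounds$ also fails for $(3,3)$ (three tangential schemes of $\PP^3$ with respect to cubics only span a triple plane, case (1) of Proposition \ref{ux3}) and for $(4,3)$ (four tangential schemes of $v_3(\PP^4)$ are dependent, as recalled in the proof of Lemma \ref{c6}), so neither of those cases is ``routine''. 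More seriously, for $x=2$ and $m\in\{3,4\}$ the approach is structurally blocked rather than merely delicate. With respect to quadrics on $\PP^m$, $m\ge 3$, essentially every union of two ``fat'' pieces (tangential schemes or $2$-points) fails to impose independent conditions, since $\sigma_2(v_2(\PP^m))$, $\sigma_{1,1}(v_2(\PP^m))$ and, for $m=4$, $\sigma_2(\tau(v_2(\PP^4)))$ are all defective; the few two-fat configurations that can be repaired by adding general points waste at least one or two degrees per level. Each of the four $\Oo_{\PP^m}(2)$-levels can therefore cleanly absorb only one fat piece, while every tangential scheme of $X$ deposits at least one fat piece on some level whichever type (namely $(2n-1,2)$, $(2,2n-1)$ or $(n,n,1)$) you assign to it, and the critical filling statements involve five or six such schemes. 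Six fat pieces cannot be housed in four levels within the available slack (for $(m,x)=(4,2)$ the total excess is at most $6$, while any admissible placement of six fat pieces wastes strictly more), so no distribution exists and the ``direct verification by explicit specialization'' you defer to cannot be carried out with this divisor.

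The paper avoids all of this by restricting to $M=H\times\PP^1\in|\Oo_X(1,0)|$ and inducting on $m$, with base case $m=1$ supplied by \cite{co}. Restriction then produces tangential schemes of the smaller Segre--Veronese $\PP^{m-1}\times\PP^1$ still carrying $\PP^1$-degree $3$, where the inductive hypothesis applies, and the residual system $\Oo_X(x-1,3)$ keeps the degree-$3$ factor that provides the room your levels lack; the quadrics of $\PP^3$ and $\PP^4$ never enter. If you want to salvage your write-up you should switch divisors and then rebuild the auxiliary statements about the residual configurations (unions of tangential schemes with a few $2$-points, tangent vectors and points on $\PP^{m-1}\times\PP^1$ in bidegrees $(x-1,3)$ and $(x-2,3)$), which is precisely the content of Proposition \ref{c4} and Lemmas \ref{c+}--\ref{c6}.
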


We fix $o\in \PP^1$ and set $D:= \PP^m\times \{o\}$. For any positive integer $x$ set $Z_x:= Z_{0,x}$.

We warm up with the following easy result.

\begin{proposition}\label{c4}
Fix an integer $m>0$ and set $X= \PP^m\times \PP^1$. We have $h^1(\Ii _{Z_1}(1,3)) =h^0(\Ii _{Z_2}(1,3)) =0$.
\end{proposition}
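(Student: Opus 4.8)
The plan is to prove both vanishings by induction on $m$, but using as Horace divisor the \emph{horizontal} hyperplane $H:=\PP^{m-1}\times\PP^1\subset X$ (with $\PP^{m-1}\subset\PP^m$ a hyperplane) rather than the fiber $D=\PP^m\times\{o\}$ fixed above. Since $H\in|\Oo_X(1,0)|$, we have $\Oo_X(1,3)|_H=\Oo_H(1,3)$ and $\Oo_X(1,3)(-H)=\Oo_X(0,3)$. This is the right choice because restriction to $H$ reproduces the same type of polarisation one dimension lower, which drives the induction, whereas $\Oo_X(0,3)$ is pulled back from $\PP^1$ and carries only the $4$-dimensional space of binary cubics. (The fiber $D$ is unsuitable here: $\Oo_X(1,3)|_D=\Oo_{\PP^m}(1)$ is too small to absorb a tangential scheme of $\PP^m$, whose degree $2m+1$ exceeds $m+1$.)

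For the inductive step I would specialise each tangential scheme occurring in $Z_1$ or $Z_2$ so that its defining tangent vector $\mathbf v$ lies in $H$. By the type $(2m+1,2)$ analysis recalled before Remark \ref{stu1} (here $2m+1=2\dim H+1$), in the limit the trace $Z\cap H$ is a general tangential scheme of $H=\PP^{m-1}\times\PP^1$, while the residual $\Res_H(Z)$ is the tangent vector $\mathbf v$ itself, a general curvilinear length-$2$ tangent vector of $H$; this last point I would confirm from the local model $\Ii_Z=\mathfrak m^3+(\text{quadrics annihilated by }\mathbf v)$ by computing $\Ii_Z:\Ii_H$ in coordinates adapted to $\mathbf v$. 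Feeding this into the residual sequence \eqref{eq++ss10} for $H$ and invoking upper semicontinuity of $h^0$ and $h^1$, each statement splits into a trace computation on $\Oo_H(1,3)$ and a residual computation on $\Oo_X(0,3)$. The trace computations are precisely the assertions for $\PP^{m-1}\times\PP^1$ — one tangential scheme of $H$ for the $h^1$-statement, two for the $h^0$-statement — and are supplied by the inductive hypothesis.

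The residual computations are elementary and uniform in $m$. A section of $\Oo_X(0,3)$ is a binary cubic $P$ on $\PP^1$, constant along $\PP^m$; since a general tangent vector of $H$ has nonzero $\PP^1$-component, vanishing on it forces $P$ to have a double root at its image $r'\in\PP^1$, i.e. two independent conditions. Hence for $Z_1$ the single residual tangent vector gives $h^1(\Ii_{\Res_H(Z_1)}\otimes\Oo_X(0,3))=0$, closing the $h^1$-step through the long exact sequence; for $Z_2$ the two residual tangent vectors, with distinct general images $r_1',r_2'$, would force $P$ to have a double root at each, impossible for a cubic unless $P=0$, so $h^0(\Ii_{\Res_H(Z_2)}\otimes\Oo_X(0,3))=0$, closing the $h^0$-step. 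The induction bottoms out at $m=1$, where $H=\PP^0\times\PP^1=\PP^1$ and $\Oo_H(1,3)=\Oo_{\PP^1}(3)$: a tangential scheme of $\PP^1$ is a triple point, imposing three independent conditions on the four-dimensional $H^0(\Oo_{\PP^1}(3))$, and two triple points have degree $6>4$ with no cubic vanishing triply at two distinct points.

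The only delicate points are bookkeeping ones: verifying that the flat limit used in the specialisation is genuinely the tangential scheme tangent to $H$ (so that semicontinuity applies) and that $\Res_H(Z)$ is exactly the tangent vector with its $\PP^1$-component intact (so the count is a true ``double root'' rather than something weaker). Both reduce to the coordinate computation of $\Ii_Z:\Ii_H$ indicated above, and I expect no obstacle beyond this, since the numerics are exactly balanced: the degree $2m+3$ of a tangential scheme of $X$ splits as $(2m+1)+2$ across $H$, and $h^0(\Oo_X(0,3))=4$ is precisely what makes the binary-cubic argument close.
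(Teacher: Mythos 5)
Your proof is correct and follows essentially the same route as the paper's: induction on $m$ using the divisor $M=\PP^{m-1}\times\PP^1\in|\Oo_X(1,0)|$, specializing each tangential scheme to type $(2n-1,2)$ with respect to $M$ so that the trace is a tangential scheme of $M$ (handled by induction) and the residual is a tangent vector of $M$ killed by the binary cubics in $H^0(\Oo_X(0,3))$. The only difference is that the paper disposes of the base case $m=1$ by citing \cite{co}, whereas you run the same specialization once more down to $\PP^0\times\PP^1$ and finish with triple points on $\PP^1$ --- a harmless, self-contained variant.
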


\begin{proof}
We use induction on $m$, the case $m=1$ being true by \cite{co}. Take $m>1$ and set $M:= \PP^{m-1}\times \PP^1\in |\Oo _X(1,0)|$.  We specialize $Z_2$ (resp. $Z_1$) to a general union $A$
of two (resp. one) schemes of type $(2n-1,2)$ with respect to $M$. The inductive assumption gives $h^0(M,\Ii _{A\cap M}(1,3)) =0$ (resp. $h^1(\Ii _{A\cap M}(1,3))
=0$). Since $E:= \Res _M(A)$ is a general union of two (resp. one) tangent vectors of $M$,  $h^i(\Ii _E(0,1)) =0$,
$i=0,1$ (resp. $h^1(\Ii _E(0,1)) =0$).
\end{proof}

\begin{lemma}\label{c+}
Set $Y:= \PP^1$ and $X:= \PP^1\times \PP^1$. Let $A\subset X$ be a general union of $Z_2$ and one 2-point. Then
$h^1(\Ii _A(3,2)) =0$.
\end{lemma}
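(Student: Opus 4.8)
The plan is to run the differential Horace method (Lemma~\ref{a3}) with respect to a ruling $D\in|\Oo_X(0,1)|$, so that $D\cong\PP^1$ with $L_{|D}=\Oo_{\PP^1}(3)$ and residual bundle $\Res_D L=\Oo_X(3,1)\cong\Oo_X(1,3)$, precisely the line bundle treated in Proposition~\ref{c4}. Writing $A=Z^{(1)}\sqcup Z^{(2)}\sqcup(2q_0)$ with $Z^{(1)},Z^{(2)}$ the two tangential schemes and $2q_0$ the $2$-point, I would keep $Z^{(1)}$ off $D$, so that it has empty trace and passes whole to the residual as a tangential scheme of $X$, and specialize $Z^{(2)}$ and $2q_0$ onto $D$ using the types recorded just before Remark~\ref{stu1}: $Z^{(2)}$ of type $(2n-1,2)=(3,2)$ and $2q_0$ of type $(n,1)=(2,1)$ with respect to $D$.

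First I would dispose of the trace. By construction $A\cap D$ is the union of the tangential scheme of $D$ cut out by $Z^{(2)}$ (length $3$) and the $2$-point $\{2q_0,D\}$ of $D$ (length $2$), a general length-$5$ subscheme of $D\cong\PP^1$; since $5>4=h^0(\Oo_{\PP^1}(3))$, the non-defectivity of $\PP^1$ (\cite{co}) gives $h^0(D,\Ii_{A\cap D}\otimes\Oo_{\PP^1}(3))=0$, so the trace fills $L_{|D}$. By the residual sequence~\eqref{eq++ss10} and Lemma~\ref{a3} it then remains to show that $\Res_D A=Z^{(1)}\sqcup v\sqcup\{q_0\}$ imposes the expected conditions to $\Oo_X(3,1)$, where $v\subset D$ is the tangent vector $\Res_D Z^{(2)}$ and $\{q_0\}=\Res_D(2q_0)$. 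As $\deg(\Res_D A)=5+2+1=8=h^0(\Oo_X(3,1))$, the target is the balanced vanishing $h^0(\Ii_{\Res_D A}(3,1))=h^1(\Ii_{\Res_D A}(3,1))=0$, which after exchanging the two factors is a configuration for $\Oo_X(1,3)$ of the kind governed by Proposition~\ref{c4}, namely a single tangential scheme enlarged by a tangent vector and a point.

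The step I expect to be the real obstacle is making this second reduction honest, because every residual of a subscheme specialized onto $D$ is again contained in $D$: had one specialized all of $A$ onto $D$ the residual would collapse onto the single ruling and could not fill $\Oo_X(3,1)$. One is thus forced into the asymmetric split above and must check that it is not itself defective. Indeed a second, naive application of Lemma~\ref{a3} to $\Res_D A$ peels off the in-$D$ part $v\sqcup\{q_0\}$ (handled again by \cite{co}) but leaves the off-$D$ scheme $Z^{(1)}$ tested alone against the pullback bundle $\Oo_X(3,0)=\pi_1^\ast\Oo_{\PP^1}(3)$, where a single tangential scheme is special; so this precise split must be corrected. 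The fix is to route part of $Z^{(2)}$ or of $2q_0$ through the virtual schemes of types $(1,n)$ and $(2,2n-1)$, transferring just enough degree into the residual that $Z^{(1)}$ is never tested alone against a pullback bundle while every trace stays in the non-exceptional range of \cite{co}; verifying that some such balanced choice produces no defective residual, and assembling the resulting exact sequences, is what yields the expected postulation of $A$ stated as $h^1(\Ii_A(3,2))=0$.
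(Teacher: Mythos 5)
There is a genuine gap, and it appears before your first Horace step: read literally, with $Z_2$ a union of two tangential schemes, the statement you set out to prove is numerically false, and your own degree counts expose this. Indeed $\deg (A)=2\cdot 5+3=13$ while $h^0(\Oo _X(3,2))=4\cdot 3=12$, so $h^1(\Ii _A(3,2))\ge \deg (A)-h^0(\Oo _X(3,2))=1$ for \emph{every} such $A$, and no specialization can help (semicontinuity only raises $h^1$). The ``$Z_2$'' is a misprint for one tangential scheme: the paper's own proof specializes $A$ to exactly one scheme of type $(2,2,1)$ and one scheme of type $(2,1)$ with respect to $D$, i.e.\ to a scheme of degree $8=5+3$, in agreement with the parallel Lemma \ref{c1}. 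The same degree excess resurfaces inside your argument: your trace $A\cap D$ has degree $5$ on $D\cong \PP^1$ against $h^0(\Oo _{\PP^1}(3))=4$, so $h^1(D,\Ii _{A\cap D}(3))=1$, and here you conflate the two directions of the Horace method. From the residual sequence \eqref{eq++ss10} one only gets $h^1(\Ii _A(3,2))\le h^1(\Ii _{\Res _D(A)}(3,1))+h^1(D,\Ii _{A\cap D}(3))$; the vanishing $h^0(D,\Ii _{A\cap D}(3))=0$ (``the trace fills $L_{|D}$'') is what one wants when chasing an $h^0$-vanishing, but for an $h^1$-vanishing the trace must itself have $h^1=0$, which a general degree-$5$ scheme on $\PP^1$ never does for $\Oo _{\PP^1}(3)$. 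Consequently the closing paragraph, which defers everything to ``some balanced choice'' of virtual types $(2,2n-1)$ and $(1,n)$, cannot be completed: the excess of $1$ persists under any rerouting.

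For the corrected statement (one tangential scheme plus one 2-point, degree $8\le 12$) the paper's route also differs from your split in an instructive way: it does not put the tangent vector inside $D$ (your type $(3,2)$), but uses the type $(2,2,1)$ specialization, i.e.\ reduction on $D$ with tangent vector transverse to $D$. Then the trace is a general union of two 2-points of the curve $D$, of degree $4=h^0(\Oo _{\PP^1}(3))$, so $h^0=h^1=0$ on $D$; the first residual is a 2-point of $X$ supported on $D$ plus a point of $D$, whose trace (degree $3$) again has $h^1=0$; and the second residual is a single point against $\Oo _X(3,0)$. In particular the obstacle you correctly identified --- a full tangential scheme tested alone against the pullback $\Oo _X(3,0)$, where it is special --- never arises, because type $(2,2,1)$ spreads each component over two residual steps instead of dumping a degree-$5$ tangential scheme whole into the residual. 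Your instinct about that obstacle was sound; the remedy was the $(n,n,1)$ type you had available but did not use, applied to the single tangential scheme of the intended statement.
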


\begin{proof}
We specialize $A$ to a general union $B$ of a scheme of type $(2,2,1)$ with respect to $D = Y\times \{o\}$
and a scheme of type $(2,1)$. We have $h^i(D,\Ii _{D\cap B}(3,x)) =0$, $i=0,1$, $x\in \NN$, and $\Res _D(B)$ is a general
union of one 2-point of $X$ with support on $D$ and a point of $D$. Since $\Res _D(\Res _D(B))$ is a point and $h^1(D,\Ii
_{D\cap \Res _D(B)}(3,x)) =0$, $x\in \NN$, we conclude.
\end{proof}

\begin{lemma}\label{c1}
Set $Y:= \PP^1$ and $X:= \PP^2\times \PP^1$. Let $A\subset X$ be a general union of one tangential scheme and one $2$-point. Then
$h^1(\Ii _A(1,3)) =0$.
\end{lemma}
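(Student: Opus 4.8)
The plan is to prove the equivalent statement that $A$ imposes independent conditions on $H^0(\Oo_X(1,3))$. Since $n:=\dim X=3$ we have $\deg(A)=(2n+1)+(n+1)=11$, while $h^0(\Oo_X(1,3))=3\cdot 4=12$; thus $h^1(\Ii_A(1,3))=0$ is equivalent to $h^0(\Ii_A(1,3))=1$, and every estimate below will be a tight ``perfect fit''. I would run the differential Horace method twice, peeling off one factor each time, so that the problem descends from $\PP^2\times\PP^1$ to $\PP^1\times\PP^1$ and finally to $\PP^1$.

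For the first step I take $M:=\ell\times\PP^1\in|\Oo_X(1,0)|$, with $\ell\subset\PP^2$ a general line, so that $M\cong\PP^1\times\PP^1$, $\Oo_X(1,3)|_M=\Oo_M(1,3)$ (with $h^0=8$) and $\Oo_X(1,3)(-M)=\Oo_X(0,3)$ (with $h^0=4$). I specialize the tangent vector $\mathbf v$ defining the tangential scheme so that $\mathbf v\subset M$ (type $(2n-1,2)=(5,2)$ with respect to $M$) and I place the support of the $2$-point on $M$ (type $(n,1)=(3,1)$); by upper semicontinuity it suffices to treat this special position. Then $A\cap M$ is a general union of one tangential scheme and one $2$-point of $M$, of degree $5+3=8=h^0(\Oo_M(1,3))$, while $\Res_M(A)\subset M$ is a general union of one tangent vector and one point, of degree $3$. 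By the residual sequence \eqref{eq++ss10} and Lemma \ref{a3}, $h^1(\Ii_A(1,3))=0$ follows once $h^1(M,\Ii_{A\cap M}(1,3))=0$ and $h^1(X,\Ii_{\Res_M(A)}(0,3))=0$. The residual term is easy: $\Oo_X(0,3)=\pi_2^\ast\Oo_{\PP^1}(3)$ and $\Oo_X(-1,3)$ is acyclic, so the computation reduces to $\PP^1$, where a general tangent vector together with a general point impose $3$ independent conditions on $|\Oo_{\PP^1}(3)|$ (Lemma \ref{a2}).

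The genuinely new point is the surface statement: on $S:=\PP^1\times\PP^1$ a general union $A_S$ of one tangential scheme and one $2$-point imposes independent conditions on $\Oo_S(1,3)$ (again $\deg A_S=8=h^0$, a perfect fit). I would prove this by a second Horace step with respect to the fibre $N:=\{p\}\times\PP^1\in|\Oo_S(1,0)|$, where $\Oo_S(1,3)|_N=\Oo_{\PP^1}(3)$ and $\Oo_S(1,3)(-N)=\Oo_S(0,3)$, both with $h^0=4$. Here a naive specialization overfills the trace (a tangential scheme of $N$ has degree $3$ and a $2$-point of $N$ degree $2$, total $5>4$), so I invoke the differential Horace/virtual-scheme device of \cite{cgg}: I replace the tangential scheme of $S$ by a virtual scheme of type $(2,2n-1)=(2,3)$ (trace a tangent vector of $N$, residue a tangential scheme of $N$) and keep the $2$-point of type $(2,1)$. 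Then the trace consists of two general degree-$2$ jets on $\Oo_{\PP^1}(3)$ (degree $4=h^0$), and the residue, which lies in $N$, is a tangential scheme of $\PP^1$ plus a point (degree $4$) read off on $\Oo_S(0,3)$; exactly as in the first step this descends to $\PP^1$ because $\Oo_S(-1,3)$ is acyclic. Both $\PP^1$-level computations are elementary (and are contained in \cite{co}), giving $h^1(S,\Ii_{A_S}(1,3))=0$ and hence the surface statement.

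The main obstacle is purely numerical: because $\deg A=h^0(\Oo_X(1,3))-1$ and every intermediate count is a perfect fit, there is no slack at all, so at each Horace step the trace \emph{and} the residue must both have vanishing $h^1$. This rigidity dictates the choice of specialization types, and it is precisely at the second (curve) step that an honest non-virtual degeneration would overfill $|\Oo_{\PP^1}(3)|$; the virtual scheme of \cite{cgg} is what restores the balance $4+4$ between trace and residue. Once the types are chosen as above, checking the two statements on $\PP^1$ is routine.
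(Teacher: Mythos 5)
Your argument is correct, but it is not the argument printed in the paper, and the comparison is worth making explicit. The printed proof of Lemma \ref{c1} is textually the proof of Lemmas \ref{c+} and \ref{c0}: it specializes with respect to $D:=Y\times\{o\}$ with $Y=\PP^1$, uses the surface types $(2,2,1)$ and $(2,1)$ and the bidegrees $(3,x)$, none of which match the threefold $X=\PP^2\times\PP^1$ and the bundle $\Oo_X(1,3)$ of the statement; moreover, if one transplants that specialization to the surface level for bidegree $(1,3)$, the residue contains a $2$-point of the surface supported on a fibre of the first projection, and such a $2$-point imposes only $2$ of its $3$ conditions on $\Oo(0,3)=\pi_2^{\ast}\Oo_{\PP^1}(3)$, so the count does not close. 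Your two-step descent ($X\to M=\ell\times\PP^1$ via the honest types $(5,2)$ and $(3,1)$, then $M\to\PP^1$ via a fibre $N$ of the first projection) has all the right numerics ($11$ against $h^0=12$; $8=h^0(\Oo_M(1,3))$ and $3$ against $h^0(\Oo_X(0,3))=4$; then $4+4$), and the point you isolate is exactly the one that matters: replacing the tangential scheme of the surface by the virtual scheme of type $(2,3)$ of \cite{cgg}, so that the residue is a tangential scheme \emph{of the fibre} $N$ (hence maps isomorphically under $\pi_2$ to a triple point of $\PP^1$ and imposes $3$ independent conditions), is what makes the second step work. Two cosmetic remarks: in the first step no differential lemma and no appeal to Lemma \ref{a3} are needed, since both specializations are honest and semicontinuity together with the residual sequence \eqref{eq++ss10} suffices; and the reductions of the residual computations to $\PP^1$ should be phrased via the scheme-theoretic image under $\pi_2$ (a general tangent vector of $M$ is not vertical, so its image is a length-$2$ jet). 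With those readings your proof is complete and, unlike the printed one, actually proves the stated lemma.
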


\begin{proof}
We specialize $A$ to a general union $B$ of a scheme of type $(2,2,1)$ with respect to $D:= Y\times \{o\}$
and a scheme of type $(2,1)$. We have $h^i(D,\Ii _{D\cap B}(3,x)) =0$, $i=0,1$, $x\in \NN$, and $\Res _D(B)$ is a general
union of one $2$-point of $X$ with support on $D$ and a point of $D$. Since $\Res _D(\Res _D(B))$ is a point and $h^1(D,\Ii
_{D\cap \Res _D(B)}(3,x)) =0$, $x\in \NN$, we conclude.
\end{proof}

\begin{lemma}\label{c++01}
Take $X=\PP^2\times \PP^1$, fix a line $H\subset \PP^2$ and set $M: =H\times \PP^1$. Let $A\subset X$ be a general 2-point with $A_{\red}\in M$. Then $h^0(\Ii _{Z_1\cup A}(1,3)) =1$.
\end{lemma}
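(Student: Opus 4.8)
The plan is to prove the equivalent statement $h^0(\Ii_{Z_1\cup A}(1,3))=1$ by a Horace degeneration against $M$, combined with semicontinuity. Since $\Oo_X(1,3)=\Oo_{\PP^2}(1)\boxtimes\Oo_{\PP^1}(3)$ has no higher cohomology and $Z_1\cup A$ is zero-dimensional, $H^i(\Ii_{Z_1\cup A}(1,3))\cong H^i(\Oo_X(1,3))=0$ for $i\ge 2$, so $\chi(\Ii_{Z_1\cup A}(1,3))=h^0(\Oo_X(1,3))-\deg(Z_1\cup A)=12-(7+4)=1$ and $h^0=1+h^1\ge 1$. Thus it suffices to exhibit one flat specialization of $Z_1$ (keeping $A$ general with $A_{\red}=q\in M$) for which $h^0=1$; by upper semicontinuity of $h^0$ along the irreducible family of tangential schemes this forces $h^0(\Ii_{Z_1\cup A}(1,3))\le 1$, hence $=1$.

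The crucial choice is to specialize $Z_1$ so that its tangent vector $\mathbf v$ lies in $M$, i.e. $Z_1$ becomes a tangential scheme of type $(2n-1,2)=(5,2)$ with respect to $M$ (here $n=3$, $M\in|\Oo_X(1,0)|$, $M\cong\PP^1\times\PP^1$). Then $Z_1\cap M$ is the tangential scheme of $M$ associated to $\mathbf v$ (degree $5$), while $\Res_M(Z_1)$ is the tangent vector of $X$ in the direction $\mathbf v$, which is again contained in $M$ (degree $2$). Because $A_{\red}=q\in M$, the $2$-point $A$ has type $(n,1)=(3,1)$ with respect to $M$, so $A\cap M=\{2q,M\}$ (degree $3$) and $\Res_M(A)=\{q\}$. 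Writing $W:=Z_1\cup A$ after this specialization, we get $W\cap M=(Z_1\cap M)\cup\{2q,M\}$ of degree $5+3=8$ and $\Res_M(W)=\Res_M(Z_1)\cup\{q\}\subset M$ of degree $2+1=3$. The hypothesis $A_{\red}\in M$ is exactly what balances this split: it sends only a point (not the full $2$-point) into the residual and a full $2$-point of $M$ into the restriction.

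I then feed the two pieces into the residual sequence \eqref{eq++ss10} for $M$, which restricts $\Oo_X(1,3)$ to $\Oo_M(1,3)$ and twists the residual by $\Oo_X(1,3)(-M)=\Oo_X(0,3)$. On $M\cong\PP^1\times\PP^1$ the scheme $W\cap M$ is a general union of one tangential scheme and one $2$-point, of degree $8=h^0(\Oo_M(1,3))$; by Lemma \ref{c1} it imposes independent conditions, so $h^0(M,\Ii_{W\cap M}(1,3))=h^1(M,\Ii_{W\cap M}(1,3))=0$. For the residual I use $\pi_2\colon X\to\PP^1$: since $\mathbf v$ (a general tangent vector of $M=H\times\PP^1$) has nonzero $\PP^1$-component, $\pi_2$ carries $\Res_M(Z_1)$ isomorphically onto the length-$2$ scheme $2\pi_2(p')$ and $\{q\}$ onto $\{\pi_2(q)\}$, with distinct supports; hence $\pi_2$ restricts to an isomorphism of $\Res_M(W)$ onto a degree-$3$ divisor $\Delta\subset\PP^1$ and $R^1\pi_{2*}\Ii_{\Res_M(W)}=0$. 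As $\Oo_X(0,3)=\pi_2^\ast\Oo_{\PP^1}(3)$, the projection formula gives $H^i(X,\Ii_{\Res_M(W)}(0,3))\cong H^i(\PP^1,\Ii_\Delta(3))=H^i(\PP^1,\Oo_{\PP^1})$, so $h^0=1$ and $h^1=0$. The cohomology sequence of \eqref{eq++ss10} then reads
\[
0\to H^0(\Ii_{\Res_M(W)}(0,3))\to H^0(\Ii_W(1,3))\to H^0(M,\Ii_{W\cap M}(1,3))\to\cdots,
\]
whose right term vanishes and whose left term is $1$-dimensional, so $h^0(\Ii_W(1,3))=1$, as required.

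The only real obstacle is the choice of specialization. If one residuates the generic $Z_1$ (supported off $M$) directly against $M$, then $\Res_M(Z_1\cup A)=Z_1\cup\{q\}$ spreads in the $\PP^2$-direction and its image under $\pi_2$ has colength $4$ in $\PP^1$, so $R^1\pi_{2*}$ is nonzero and $h^1(\Ii_{Z_1\cup\{q\}}(0,3))\ne 0$; the sequence no longer splits and leaves an uncontrolled connecting map. Pushing $\mathbf v$ into $M$ is precisely what collapses the residual onto $M$ as a curvilinear scheme that $\pi_2$ sees faithfully, neutralizing the degenerate bundle $\Oo_X(0,3)$ and letting Lemma \ref{c1} dispatch the restriction; verifying that $\Res_M(Z_1)$ indeed lands in $M$ as a tangent vector transverse to the fibres of $\pi_2$ is the computational heart of the argument.
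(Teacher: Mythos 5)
Your overall architecture coincides with the paper's: both arguments specialize $Z_1$ to a scheme of type $(5,2)$ with respect to $M$ (so that its tangent vector lies in $M$), split $Z_1\cup A$ along $M$ into a trace of degree $8=h^0(\Oo_M(1,3))$ and a residual of degree $3$ contained in $M$, compute $h^0(\Ii_{\Res_M(Z_1\cup A)}(0,3))=1$ by projecting to the second factor, and conclude from the residual exact sequence \eqref{eq++ss10} together with $\chi=1$ and semicontinuity. Up to the trace computation your argument is correct and is the paper's.

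The gap is in the trace. You dispose of $h^0(M,\Ii_{(Z_1\cup A)\cap M}(1,3))=0$ by citing Lemma \ref{c1}, but that lemma concerns the threefold $\PP^2\times\PP^1$, where a tangential scheme has degree $7$, a $2$-point has degree $4$ and $h^0(\Oo(1,3))=12$; what you need is the analogous statement on the surface $M\cong\PP^1\times\PP^1$, where the degrees are $5$ and $3$ and $h^0(\Oo_M(1,3))=8$. These are different statements, the surface one is not a formal consequence of the threefold one, and no lemma in the paper states it (Lemma \ref{c00} is the nearest, but for $\Oo(2,3)$). Nor is it free: by \cite{co} a general tangential scheme $T$ of $M$ imposes $5$ independent conditions on $\Oo_M(1,3)$, leaving a $3$-dimensional system $H^0(M,\Ii_T(1,3))$, and you must still show that a general $2$-point of $M$ kills this system, i.e.\ that it separates first-order directions at a general point of $M$ --- precisely the kind of non-degeneracy that can fail and must be checked. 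This is what the second half of the paper's proof does: a further Horace step inside $M$ against $L\in|\Oo_M(0,1)|$, specializing the $2$-point of $M$ to have type $(2,1)$ with respect to $L$, which reduces the claim to $h^0(M,\Ii_{(M\cap W)\cup\{p\}}(1,2))=0$, settled by \cite{co} and Lemma \ref{a3}. Your closing remark that the ``computational heart'' is checking that $\Res_M(Z_1)$ is a tangent vector of $M$ transverse to the fibres of $\pi_2$ misplaces the difficulty: that check is routine, while the trace computation on $M$ is where the real work lies and is currently unproved in your write-up.
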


\begin{proof}
We specialize $Z_1$ to a scheme $W$ of type $(5,2)$ with respect to $M$. Since $\Res _M(W\cup A)$ is a general union of a
tangent vector of $M$ and a point of $M$,  $h^0(\Ii _{\Res _M(W)}(0,3)) =1$. Thus it is sufficient to prove that
$h^0(M,\Ii _{M\cap (W\cup A)}(1,3))\le 1$. Take $L\in |\Oo _M(0,1)|$. We specialize $A\cap M$ to a 2-point $A'$ of $L$ whose
reduction is a general point of $M$. Taking $\Res _L$ we get $h^0(M,\Ii _{M\cap (W\cup A)}(1,3))=h^0(M,\Ii _{(M\cap W)\cup
\{p\}}(1,2)) =0$ (\cite{co} applied to the tangential scheme $M\cap W$ and Lemma \ref{a3}).
\end{proof}

\begin{lemma}\label{c0}
Set $Y:= \PP^1$ and $X:= \PP^1\times \PP^1$. Let $A\subset X$ be a general union of $Z_2$ and one 2-point. Then
$h^1(\Ii _A(3,2)) =0$.
\end{lemma}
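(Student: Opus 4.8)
The plan is to run the differential Horace method of the preliminary section with respect to the divisor $D := Y\times\{o\} = \PP^1\times\{o\}$, along the lines of Lemma \ref{c+}, whose hypotheses match the present ones. Here $D\cong\PP^1$, the restriction $\Oo_X(3,2)|_D$ is $\Oo_{\PP^1}(3)$, and $\Oo_X(3,2)(-D)\cong\Oo_X(3,1)$ since $D\in|\Oo_X(0,1)|$. So the whole argument will descend along $D$, peeling off one copy of the second-factor degree at each step until the base case on $\PP^1$.

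First I would specialize $A$ to a scheme $B$ adapted to $D$: replace the tangential scheme by a scheme of type $(2,2,1)$ with respect to $D$ and the $2$-point by a scheme of type $(2,1)$ with respect to $D$, in the terminology introduced before Remark \ref{stu1}. By Remark \ref{stupid} it is then enough to prove $h^1(\Ii_B(3,2)) = 0$. Reading off the trace and residue from those types, the trace $B\cap D$ is a general union of two $2$-points of $D\cong\PP^1$, of degree $4 = h^0(\Oo_{\PP^1}(3))$, so $h^i(D,\Ii_{B\cap D}(3,2)) = 0$ for $i = 0,1$; and the residue $\Res_D(B)$ is a general union of one $2$-point of $X$ supported on $D$ and one point of $D$. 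The residual sequence \eqref{eq++ss10} then reduces the claim to $h^1(\Ii_{\Res_D(B)}(3,1)) = 0$.

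The only step carrying any content is the second descent along $D$. Applying \eqref{eq++ss10} once more, the new trace $\Res_D(B)\cap D$ is a general union of a $2$-point and a point of $\PP^1$, of degree $3\le 4$, giving $h^1(D,\Ii_{\Res_D(B)\cap D}(3,1)) = 0$, while the new residue $\Res_D(\Res_D(B))$ is a single point $q$, for which $h^1(\Ii_{\{q\}}(3,0)) = 0$ is immediate from $h^1(\Oo_X(3,0)) = 0$. Splicing the two sequences yields $h^1(\Ii_B(3,2)) = 0$, and hence $h^1(\Ii_A(3,2)) = 0$ by semicontinuity. I do not anticipate a genuine obstacle: the one thing to monitor is that every trace occurring on $D$ has degree $\le 4$, so that generic schemes on $\PP^1$ keep imposing independent conditions on $\Oo_{\PP^1}(3)$, which holds at each stage.
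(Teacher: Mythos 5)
Your argument is correct and is essentially identical to the paper's own proof: the same specialization of $A$ to one scheme of type $(2,2,1)$ and one of type $(2,1)$ with respect to $D=\PP^1\times\{o\}$, followed by the same two residual steps (trace of degree $4=h^0(\Oo_{\PP^1}(3))$, then a degree-$3$ trace and a single residual point). Note, however, that like the paper's proof you are actually proving the statement for a union of \emph{one} tangential scheme and one $2$-point: the ``$Z_2$'' in the statement must be a typo for $Z_1$, since $Z_2$ together with a $2$-point has degree $2\cdot 5+3=13>12=h^0(\Oo_X(3,2))$, so $h^1$ could not vanish for the scheme as literally described.
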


\begin{proof}
We specialize $A$ to a general union $B$ of a scheme of type $(2,2,1)$ with respect to $D:= Y\times \{o\}$
and a scheme of type $(2,1)$. We have $h^i(D,\Ii _{D\cap B}(3,x)) =0$, $i=0,1$, $x\in \NN$, and $\Res _D(B)$ is a general
union of a $2$-point of $X$ with support on $D$ and a point of $D$. Since $\Res _D(\Res _D(B))$ is a point and $h^1(D,\Ii
_{D\cap \Res _D(B)}(3,x)) =0$, $x\in \NN$, we conclude.
\end{proof}

\begin{lemma}\label{c00}
Set $X:= \PP^1\times \PP^1$. Let $A\subset X$ be a general union of one tangential point and one 2-point. Then $h^0(\Ii _A(2,3)) =4$.
\end{lemma}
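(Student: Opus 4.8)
The plan is to show that the general $A$ imposes independent conditions on $|\Oo_X(2,3)|$. Since $X=\PP^1\times\PP^1$ has $n:=\dim X=2$, a tangential scheme has degree $2n+1=5$ and a $2$-point has degree $n+1=3$, so $\deg(A)=8$; as $h^0(\Oo_X(2,3))=12$ and $h^1(\Oo_X(2,3))=0$, the desired equality $h^0(\Ii_A(2,3))=4$ is equivalent to $h^1(\Ii_A(2,3))=0$. By upper semicontinuity of cohomology it suffices to exhibit a single specialization $B$ of $A$ with $h^1(\Ii_B(2,3))=0$.

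First I would apply the differential Horace lemma with respect to a fibre $D:=\{o'\}\times\PP^1\in|\Oo_X(1,0)|$ of the first projection, chosen precisely so that $\Oo_X(2,3)_{|D}=\Oo_{\PP^1}(3)$, i.e. $h^0(D,\Oo_X(2,3)_{|D})=4$. I specialize the tangential scheme to a scheme of type $(2,2,1)$ with respect to $D$ (reduction $q\in D$, tangent vector not contained in $D$) and the $2$-point to a scheme of type $(2,1)$ with respect to $D$; call $B$ the resulting general union. Then $B\cap D$ is a general union of two $2$-points of $D\cong\PP^1$, a length-$4$ subscheme, and since $\deg(B\cap D)=4=h^0(\Oo_{\PP^1}(3))$ we get $h^0(D,\Ii_{B\cap D}(2,3))=h^1(D,\Ii_{B\cap D}(2,3))=0$. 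By the residual exact sequence \eqref{eq++ss10} it then suffices to prove $h^1(\Ii_{\Res_D(B)}(1,3))=0$, where $\Res_D(B)$ is the general union of the $2$-point $2q$ of $X$ (the residual of the type-$(2,2,1)$ scheme) with $q\in D$ and a reduced point $p\in D$, a scheme of degree $4$ on $\Oo_X(1,3)$.

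To finish I would run a second differential Horace step on $\Res_D(B)$, again with the divisor $D$ (on which both $q$ and $p$ lie), using $\Oo_X(1,3)_{|D}=\Oo_{\PP^1}(3)$ and $\Oo_X(1,3)(-D)=\Oo_X(0,3)$. Here $\Res_D(B)\cap D$ is the union of the $2$-point $\{2q,D\}$ and the point $p$, a length-$3$ subscheme of $D$, so $h^1(D,\Ii_{\Res_D(B)\cap D}(1,3))=0$ because $3\le h^0(\Oo_{\PP^1}(3))=4$; and $\Res_D(\Res_D(B))=\{q\}$ is a single point, whence $h^1(\Ii_{\{q\}}(0,3))=0$. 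Applying \eqref{eq++ss10} once more gives $h^1(\Ii_{\Res_D(B)}(1,3))=0$, and combining with the first step yields $h^1(\Ii_B(2,3))=0$, hence $h^0(\Ii_A(2,3))=4$.

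The computation is routine once the divisor is fixed; the only genuine choice, and the step that makes the numerology balance, is to restrict along the fibre of bidegree $(1,0)$ so that the restricted bundle is $\Oo_{\PP^1}(3)$ and the two specialized $2$-points fill $H^0(\Oo_{\PP^1}(3))$ exactly. Restricting along the other fibre would give $\Oo_{\PP^1}(2)$, and the two $2$-points of degree $4$ would over-determine it, producing a spurious $h^1$ on $D$. I expect no obstacle beyond bookkeeping of the types $(2,2,1)$ and $(2,1)$ and of the two residual bundles $\Oo_X(1,3)$ and $\Oo_X(0,3)$.
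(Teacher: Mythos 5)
Your argument is correct, but it is not the route the paper takes. The paper disposes of this lemma in two lines: by \cite{co} two general tangential schemes $Z_2$ of $\PP^1\times\PP^1$ satisfy $h^1(\Ii_{Z_2}(2,3))=0$, and since a general tangential scheme contains a general $2$-point one has $A\subset Z_2$ (up to generality), so $h^1(\Ii_A(2,3))=0$ follows from the monotonicity observation (Remark \ref{stupid}); the count $12-8=4$ finishes. Your proof instead is a self-contained double application of the residual exact sequence \eqref{eq++ss10} along the fibre $D\in|\Oo_X(1,0)|$, specializing the tangential scheme to type $(2,2,1)$ and the $2$-point to type $(2,1)$; all the degree and cohomology checks are right ($B\cap D$ of degree $4$ against $h^0(\Oo_{\PP^1}(3))=4$, then $\Res_D(B)\cap D$ of degree $3$, then the single point $\{q\}$), and the concluding semicontinuity step is the standard one. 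Two small remarks: what you use is the ordinary Horace method, not the differential one --- types $(2,2,1)$ and $(2,1)$ are honest specializations, not virtual schemes, so no differential trick is invoked; and your computation is essentially the same template the paper uses for the neighbouring Lemmas \ref{c+}, \ref{c1} and \ref{c0}. What your version buys is independence from \cite{co} for this particular statement; what the paper's version buys is brevity, at the cost of importing the exterior non-defectivity result.
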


\begin{proof}
By \cite{co} we have $h^1(\Ii _{Z_2}(2,3)) =0$. Since $Z_2\supset A$, we get $h^1(\Ii _A(2,3)) =0$.
\end{proof}

\begin{lemma}\label{c01}
Set $X:= \PP^2\times \PP^1$ and $M=\PP^1\times \PP^1$. Let $B\subset X$ be a general  2-point of $X$ with $B_{\red}\in M$. Then $h^1(\Ii _{Z_1\cup B}(1,3)) =0$.
\end{lemma}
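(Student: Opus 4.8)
The plan is to read off Lemma \ref{c01} from Lemma \ref{c++01} together with a Riemann--Roch count, observing that the two statements concern the \emph{same} general scheme. In \ref{c++01} the scheme $A$ is a general $2$-point of $X$ with $A_{\red}\in M$, while in \ref{c01} the scheme $B$ is a general $2$-point of $X$ with $B_{\red}\in M$; both are taken in union with the same general tangential scheme $Z_1$, and $M=\PP^1\times \PP^1$ is realized as $H\times \PP^1$ for a line $H\subset \PP^2$, so the two hypotheses coincide. Hence $Z_1\cup A$ and $Z_1\cup B$ are general schemes of the same type, and Lemma \ref{c++01} already gives $h^0(\Ii_{Z_1\cup B}(1,3))=1$.

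First I would record the degree. With $n:=\dim X=3$, a tangential scheme has degree $2n+1=7$ and a $2$-point has degree $n+1=4$; for general data the supports are disjoint, so $\deg(Z_1\cup B)=11$. Next I would compute the Euler characteristic on $X=\PP^2\times \PP^1$: by K\"{u}nneth and vanishing on each factor one has $h^0(\Oo_X(1,3))=3\cdot 4=12$ and $h^i(\Oo_X(1,3))=0$ for $i>0$. Twisting the structure sequence of the zero-dimensional scheme $Z_1\cup B$ by $\Oo_X(1,3)$ then yields $\chi(\Ii_{Z_1\cup B}(1,3))=12-11=1$ and $h^i(\Ii_{Z_1\cup B}(1,3))=h^i(\Oo_X(1,3))=0$ for $i\ge 2$.

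Combining the two inputs, $h^1(\Ii_{Z_1\cup B}(1,3))=h^0(\Ii_{Z_1\cup B}(1,3))-\chi(\Ii_{Z_1\cup B}(1,3))=1-1=0$, which is the assertion. There is essentially no obstacle beyond recognizing that \ref{c++01} already performs the hard work of pinning down $h^0$ for the identical scheme; everything else is a degree count. Should one instead want a self-contained argument, one could run the residual exact sequence \eqref{eq++ss10} for the divisor $M\in|\Oo_X(1,0)|$, specializing $Z_1$ to a scheme of type $(5,2)$ with respect to $M$ (as in the proof of \ref{c++01}) and checking the two resulting cohomology groups on $\Oo_X(0,3)$ and on $\Oo_M(1,3)$; but this route merely reproves \ref{c++01}, so the reduction above is the more economical one.
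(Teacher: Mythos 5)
Your argument is correct, but it takes a different route from the paper. The paper proves Lemma \ref{c01} directly by the Horace method: it specializes $Z_1$ to a scheme of type $(5,2)$ with respect to $M$, checks $h^i(M,\Ii_{(A\cup B)\cap M}(1,3))=0$ by a further degeneration inside $M$ with respect to a ruling $D=\PP^1\times\{o\}$, and handles the residual (a tangent vector plus a point) on $|\Oo_X(0,3)|$ -- i.e.\ essentially the same computation that already appears in the proof of Lemma \ref{c++01}. You instead observe that $Z_1\cup B$ and $Z_1\cup A$ are general members of the same irreducible family (a general tangential scheme of $X$ together with a general $2$-point of $X$ supported on $M=H\times\PP^1$), so Lemma \ref{c++01} gives $h^0(\Ii_{Z_1\cup B}(1,3))=1$, and since $\deg(Z_1\cup B)=7+4=11$, $h^0(\Oo_X(1,3))=12$ with vanishing higher cohomology, one gets $\chi(\Ii_{Z_1\cup B}(1,3))=1$ and $h^i=0$ for $i\ge 2$, whence $h^1=h^0-\chi=0$. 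This is a legitimate and more economical deduction: it makes explicit that Lemmas \ref{c++01} and \ref{c01} are numerically equivalent statements about the same scheme (the case $\chi=1$, where ``$h^0=1$'' and ``$h^1=0$'' are interchangeable), and there is no circularity since Lemma \ref{c++01} is proved independently. What the paper's self-contained specialization buys is only that it does not depend on identifying the two schemes; your reduction correctly shows that dependence is harmless, and the residual-sequence fallback you sketch at the end would indeed just reprove Lemma \ref{c++01}.
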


\begin{proof}
We specialize $Z_1$ to a scheme $A$ of type $(5,2)$ with respect to $M$.
Since $\Res _M(A)$ is a general tangent vector of $M$, its image by the projection onto the second factor of $X$ is a
tangent vector of $\PP^1$. Thus $h^0(\Ii _{\Res _M(A\cup B)}(0,3)) =2$. Thus it is sufficient to show that $h^i(M,\Ii _{A\cap
M}(1,3)) =0$, $i=0,1$. Fix $o\in \PP^1$ and set $D:= \PP^1\times \{o\}$. We specialize $A\cap M$ to a union
$G$ of a $2$-points whose reduction is in $D$ and a general tangential point $F$ of $M$. The scheme $\Res _D(G)$ is the union
of $F$ and a general point of $D$. Thus $\Res _D(G)$ gives independent conditions for $H^0(\Oo _M(1,2))$, because the
tangential point gives $5$ independent conditions to $H^0(\Oo _M(1,2))$ (\cite{co}) and $4$ conditions to $H^0(\Oo _M(1,1))$.
\end{proof}

\begin{lemma}\label{c2}
Set $X:= \PP^2\times \PP^1$ and $M:= \PP^1\times \PP^1$. Let $A\subset X$ be a general union of $Z_2$ and the scheme $E$ of type $(3,3)$ with respect to $M$
with both $E\cap M$ and $\Res _M(E)$ a 2-point of $M$. Then
$h^0(\Ii _A(2,3)) \le 6$.
\end{lemma}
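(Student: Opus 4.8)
The plan is to run the differential Horace method with respect to the divisor $M=H\times\PP^1\in|\Oo_X(1,0)|$ ($H\subset\PP^2$ a line), so that $M\cong\PP^1\times\PP^1$, $\Oo_X(2,3)_{|M}=\Oo_M(2,3)$ (with $h^0=12$) and $\Oo_X(2,3)(-M)=\Oo_X(1,3)$ (with $h^0=12$). Here $\dim X=3$, so each tangential scheme has degree $7$ and $\deg(Z_2)=14$, while $E\cap M$ and $\Res_M(E)$ are $2$-points of $M$ (degree $3$ each), so $\deg E=6$ and $\deg A=20$, against $h^0(\Oo_X(2,3))=24$. Since the two tangential schemes forming $Z_2$ are general, I first specialize each of them to a scheme of type $(5,2)$ with respect to $M$ (tangent vector lying in $M$); by upper semicontinuity of $h^0$ it suffices to prove the bound for this specialization $A_0$. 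The residual exact sequence \eqref{eq++ss10} then gives
\[
h^0(\Ii_{A_0}(2,3))\le h^0(M,\Ii_{A_0\cap M}(2,3))+h^0(\Ii_{\Res_M(A_0)}(1,3)).
\]

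For the restriction term, $A_0\cap M$ is a general union of two tangential schemes of $M$ (the intersections of the type $(5,2)$ schemes, each of degree $5$) and the $2$-point $E\cap M$ at a general point $q\in M$, of total degree $13$. The two tangential schemes form a general $Z_2$ on $M$, for which \cite{co} gives $h^1(\Ii_{Z_2}(2,3))=0$ and hence $h^0(M,\Ii_{Z_2}(2,3))=2$. The pencil $|\Ii_{Z_2}(2,3)|$ has base locus a proper closed subset of $M$, so a general point $q$ imposes one further condition and $h^0(M,\Ii_{Z_2\cup\{q\}}(2,3))=1$. Since $Z_2\cup\{q\}\subseteq A_0\cap M$, Remark \ref{stupid} yields $h^0(M,\Ii_{A_0\cap M}(2,3))\le 1$.

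For the residual term, $\Res_M(A_0)=v_1\sqcup v_2\sqcup 2p$ lies in $M$, where $v_1,v_2$ are the general tangent vectors of $M$ residual to the two type $(5,2)$ schemes and $2p=\Res_M(E)$ is a $2$-point of $M$ at the general point $p$, of total degree $7$. As $\Res_M(A_0)\subset M$ and the restriction $H^0(\Oo_X(1,3))\to H^0(\Oo_M(1,3))$ is surjective with kernel $H^0(\Oo_X(0,3))$ of dimension $4$, I obtain
\[
h^0(\Ii_{\Res_M(A_0)}(1,3))=4+h^0(M,\Ii_{\Res_M(A_0)}(1,3)).
\]
On $M$ with $\Oo_M(1,3)$ ($h^0=8$), Lemma \ref{a2} gives that $v_1\sqcup v_2$ imposes $4$ independent conditions, and since $\Oo_M(1,3)$ is very ample and $p$ is general the $2$-point $2p$ imposes the further $3$ conditions on the remaining $4$-dimensional system; hence $h^0(M,\Ii_{\Res_M(A_0)}(1,3))=1$ and the residual term equals $5$. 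Combining the two bounds, $h^0(\Ii_{A_0}(2,3))\le 1+5=6$.

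The main obstacle is the control of the two surface computations on $M\cong\PP^1\times\PP^1$, both of which sit exactly at the boundary between expected dimensions (degree $13$ against $h^0=12$ for the restriction, degree $7$ against $h^0=8$ for the residual). The first rests on the precise value $h^0(\Ii_{Z_2}(2,3))=2$ supplied by \cite{co} together with the fact that the associated pencil has no fixed component covering $M$; the second on Lemma \ref{a2} for the tangent vectors combined with very ampleness of $\Oo_M(1,3)$ to ensure the general $2$-point imposes its full three conditions. Once these are in hand the conclusion is immediate, and indeed the argument yields $\le 5$ whenever $A_0\cap M$ imposes the full $12$ conditions; the stated bound $\le 6$ is the safe estimate that survives the one possible unit of superabundance on $M$.
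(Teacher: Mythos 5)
Your proof is correct and rests on the same basic tool as the paper's (residuation with respect to $M\in |\Oo _X(1,0)|$), but with a genuinely different specialization. The paper sends only \emph{one} of the two tangential schemes to a scheme of type $(5,2)$ with respect to $M$, keeps the other general off $M$, and must then invoke Lemma \ref{c00}, Lemma \ref{c01}, Lemma \ref{a3} and the value $h^0(\Ii _{Z_1}(0,3))=1$ to control the mixed residual $Z_1\cup \Res _M(W)\cup \Res _M(E)$ in $|\Oo _X(1,3)|$. You push \emph{both} tangential schemes into $M$, so that the entire residual $\Res _M(A_0)$ lies in $M$; the restriction step then needs only \cite{co} plus the trivial fact that a general point imposes one condition on a nonempty pencil, and the residual step collapses to a computation on the surface $M$ with $\Oo _M(1,3)$. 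This buys cleaner bookkeeping (no scheme partly on and partly off $M$, no need for Lemma \ref{c01}), at the price of sitting exactly at the numerical boundary on both sides of the residual sequence, as you observe.

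One step should be tightened. You first impose the two tangent vectors $v_1,v_2$ on $H^0(\Oo _M(1,3))$ (four conditions, by Lemma \ref{a2}) and then assert that the general $2$-point $2p$ imposes three further conditions on the resulting $4$-dimensional subsystem ``because $\Oo _M(1,3)$ is very ample and $p$ is general''. Very ampleness of the \emph{complete} system does not by itself guarantee that a $2$-point at a general point imposes full conditions on a proper subsystem: a priori the subsystem $H^0(\Ii _{v_1\cup v_2}(1,3))$ could fail to be an immersion at a general point. The fix is immediate: reverse the order. Since $\Oo _M(1,3)$ is very ample and $p$ is general, $h^0(M,\Ii _{2p}(1,3))=5$; now apply Lemma \ref{a2} to the subspace $V:=H^0(M,\Ii _{2p}(1,3))$ and the general union $v_1\sqcup v_2$ to get $\dim V(-v_1-v_2)=\max \{0,5-4\}=1$. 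This gives $h^0(M,\Ii _{\Res _M(A_0)}(1,3))=1$, and the rest of your argument goes through unchanged, yielding $h^0(\Ii _{A_0}(2,3))\le 1+4+1=6$.
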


\begin{proof}
We have $h^0(\Oo _X(2,3)) =24$ and $h^0(\Oo _M(2,3)) =12$. We specialize $A$ to $Z_1\cup W\cup E$ with $W$ a scheme of type $(5,2)$ with respect to $M$.
Since $h^0(M,\Ii _{W\cap M}(2,3)) =4$ (Lemma \ref{c00}), it is sufficient to prove that $h^0(\Ii _{Z_1\cup \Res _M(W)\cup \Res _M(E)}(1,3)) \le 2$. Since $\Res _M(W)$ is a general tangent vector of $M$ and $\deg (D\cap (\Res _M(W)\cup \Res _M(E)) \le 8$ , by Lemma \ref{a3} it is sufficient to prove that $h^0(\Ii _{Z_1}(0,3)) \le
2$. We have $h^0(\Ii _{Z_1}(0,3)) =1$, because a tangential scheme of $\PP^1$ imposes $3$ independent conditions to $H^0(\Oo
_{\PP^1}(3))$. We have $h^0(\Ii _{Z_1 \cup \Res _M(W)}(1,3)) \le 4$ by Lemma \ref{c01}.
\end{proof}

\begin{lemma}\label{c3.1}
Set $X:= \PP^2\times \PP^1$. Then $h^0(\Ii _{Z_2}(1,3)) =0$.
\end{lemma}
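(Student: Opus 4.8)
The plan is to run a single step of the differential Horace method with respect to the divisor $M:=H\times \PP^1$, where $H\subset \PP^2$ is a line, so that $M\cong \PP^1\times \PP^1$ and $M\in |\Oo_X(1,0)|$. With $n=\dim X=3$ we have $2n-1=5$. This divisor is chosen because both restricted systems are favourable: the restriction is $\Oo_X(1,3)|_M=\Oo_M(1,3)$ with $h^0=8$, while the twisted-down bundle is $\Oo_X(1,3)(-M)=\Oo_X(0,3)=\pi_2^\ast\Oo_{\PP^1}(3)$ with $h^0=4$. Splitting $\deg Z_2=14$ as $\deg (A\cap M)=10$ and $\deg \Res_M(A)=4$, both restricted systems will be forced to be filled, which is exactly what makes the numerology close.

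By upper semicontinuity it suffices to prove $h^0(\Ii_A(1,3))=0$ for the scheme $A$ obtained by specializing the two tangential schemes of $Z_2$ to two general schemes of type $(5,2)$ with respect to $M$. The residual exact sequence \eqref{eq++ss10} then yields
\[
h^0(\Ii_A(1,3))\le h^0\big(M,\Ii_{A\cap M}(1,3)\big)+h^0\big(\Ii_{\Res_M(A)}(0,3)\big),
\]
so I only need to kill the two terms on the right.

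For the restriction term, $A\cap M$ is by construction a general union of two tangential schemes of $M\cong \PP^1\times \PP^1$, and the vanishing $h^0\big(M,\Ii_{A\cap M}(1,3)\big)=0$ is precisely the surface statement furnished by \cite{co} (equivalently, the base case $m=1$ of Proposition \ref{c4}); this is the only genuinely non-elementary input. For the residual term, $\Res_M(A)$ is a general union of two tangent vectors of $M$, all supported on $M$; since the restriction $H^0(\Oo_X(0,3))\to H^0(\Oo_M(0,3))$ is an isomorphism of $4$-dimensional spaces, this term is bounded by $h^0\big(M,\Ii_{\Res_M(A)}\otimes\Oo_M(0,3)\big)$, which is $0$ by Lemma \ref{a2} applied on $M$ (two general tangent vectors impose $4=\dim H^0(\Oo_M(0,3))$ independent conditions). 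Hence $h^0(\Ii_A(1,3))=0$, and the lemma follows.

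I expect the only real obstacle to be the choice of divisor. The naive choice suggested by the section, $D=\PP^2\times\{o\}\in|\Oo_X(0,1)|$, fails: its restricted system $\Oo_X(1,3)|_D=\Oo_{\PP^2}(1)$ has dimension only $3$, so any subscheme placed on $D$ absorbs at most three conditions, and the successive residuals leak into $\Oo_X(1,2),\Oo_X(1,1),\dots$, none of which has vanishing $h^0$, so the inequality never tightens to $0$. Cutting instead in the $\PP^2$-direction with $M=H\times\PP^1$ keeps a large restricted system ($h^0=8$) and reduces the whole problem to the known surface case closed by \cite{co}. Note that this argument is exactly the $m=2$ instance of Proposition \ref{c4}; isolating it is worthwhile because it serves as a base case for the $\Oo(x,3)$, $x\ge 2$, lemmas that follow.
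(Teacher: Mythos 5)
Your argument is correct. It is in substance the Horace step the paper itself uses to prove Proposition \ref{c4} (of which this lemma is exactly the $m=2$ instance): specialize both tangential schemes to type $(5,2)$ with respect to $M=H\times\PP^1\in|\Oo_X(1,0)|$, kill the trace $h^0(M,\Ii_{A\cap M}(1,3))=0$ by \cite{co} (degree $10>8=h^0(\Oo_M(1,3))$ and non-defectivity on the surface), and kill the residual $h^0(\Ii_{\Res_M(A)}(0,3))=0$ because the two general tangent vectors of $M$ push forward under $\pi_2$ to two general double points of $\PP^1$, which impose $4=h^0(\Oo_{\PP^1}(3))$ independent conditions. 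Your remark that cutting with $D=\PP^2\times\{o\}$ cannot work here is also apt. The paper's own two-line proof of Lemma \ref{c3.1} takes a different shortcut: it starts from $h^1(\Ii_{Z_1}(1,3))=0$ and tries to absorb the second tangential scheme via the containment of a general tangent vector in a general tangential scheme; as printed it even carries a numerical slip ($h^0(\Ii_{Z_1}(1,3))=12-7=5$, not $2$), and the containment of a degree-$2$ tangent vector alone does not account for the five remaining conditions, so that shortcut is at best incomplete. Your self-contained route through $M$ is the safer one, and it is anyway the argument that Proposition \ref{c4} already supplies; no gap.
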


\begin{proof}
We have $h^1(\Ii _{Z_1}(1,3)) =0$, i.e. $h^0(\Ii _{Z_1}(1,3)) =2$. Use that a general tangent vector is contained in a general tangential scheme.
\end{proof}

\begin{lemma}\label{c3.2}
Set $X:= \PP^2\times \PP^1$ and $M:= \PP^1\times \PP^1\in |\Oo _X(1,0)|$. Let $A\subset M$ be a general $2$-point of $M$. Then
$h^1(\Ii _{Z_2\cup A}(2,3)) =0$.
\end{lemma}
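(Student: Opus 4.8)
The plan is to establish the vanishing $h^1(\Ii_{Z_2\cup A}(2,3))=0$ by the differential Horace method with respect to the divisor $M\in |\Oo_X(1,0)|$. First I record the numerology: $h^0(\Oo_X(2,3))=6\cdot 4=24$, while $Z_2$ is a general union of two tangential schemes of the threefold $X$, so $\deg (Z_2)=14$ and $\deg (Z_2\cup A)=17<24$. Thus we are in the ``$h^1=0$'' regime and the claimed value $h^0(\Ii_{Z_2\cup A}(2,3))=7$ is the expected one. Using the residual exact sequence \eqref{eq++ss10} for $M$ (note $\Oo_X(2,3)(-M)=\Oo_X(1,3)$ and $\Oo_X(2,3)_{|M}=\Oo_M(2,3)$) it is enough to produce a degeneration of $Z_2\cup A$ whose trace on $M$ has $h^1=0$ with respect to $\Oo_M(2,3)$ and whose residue has $h^1=0$ with respect to $\Oo_X(1,3)$.

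The degeneration is forced by a degree balance. I specialize the first tangential scheme of $Z_2$ to a scheme of type $(5,2)$ with respect to $M$ and the second one to a virtual scheme of type $(2,5)$ with respect to $M$, exactly as described (following \cite{cgg}) before Remark \ref{stu1}; the $2$-point $A$ already lies on $M$, so its trace is $A$ and its residue is empty. Writing $T$ for the trace and $R$ for the residue of the resulting scheme, $T$ is the union of a tangential scheme of $M$ (degree $5$), a general tangent vector of $M$ (degree $2$) and $A$ (degree $3$), of total degree $10\le 12=h^0(\Oo_M(2,3))$, whereas $R\subset M$ is the union of a general tangent vector of $M$ (degree $2$) and a tangential scheme of $M$ (degree $5$), of total degree $7$. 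By Lemma \ref{a3} together with the differential Horace device for tangential schemes (\cite{cgg}) and Remark \ref{stupid} it now suffices to prove $h^1(M,\Ii_{T}(2,3))=0$ and $h^1(X,\Ii_{R}(1,3))=0$.

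For the trace, Lemma \ref{c00} gives that the tangential scheme of $M$ together with $A$ impose independent conditions on $\Oo_M(2,3)$, leaving a $4$-dimensional space of sections; imposing in addition the general tangent vector of $M$ and applying Lemma \ref{a2} on $M$ leaves a $\max \{0,4-2\}=2$-dimensional space, so that $h^1(M,\Ii_T(2,3))=2-12+10=0$. For the residue $R$, which is contained in $M$, I apply the residual sequence for $M$ once more: since $h^1(\Oo_X(0,3))=0$ it is enough to prove $h^1(M,\Ii_R(1,3))=0$ for $\Oo_M(1,3)$, where $h^0(\Oo_M(1,3))=8$. By Proposition \ref{c4} (case $m=1$) a single tangential scheme of $M=\PP^1\times \PP^1$ imposes independent conditions on $\Oo_M(1,3)$, leaving a $3$-dimensional space of sections; imposing the general tangent vector of $M$ via Lemma \ref{a2} leaves a $\max \{0,3-2\}=1$-dimensional space, hence $h^1(M,\Ii_R(1,3))=1-8+7=0$. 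This closes both cases.

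The main point to get right is the bookkeeping of the virtual scheme and the generality required by Lemma \ref{a2}. The degree balance ($10$ on $M$, $7$ off $M$) is the reason one tangential scheme must be taken of type $(5,2)$ and the other of type $(2,5)$: taking both of the same type would force the degree on $M$ to exceed $h^0(\Oo_M(2,3))=12$ or the residual degree on $M$ to exceed $h^0(\Oo_M(1,3))=8$, making an $h^1$ vanishing impossible there. The remaining subtlety is that the tangent vectors occurring in $T$ and in $R$ must be general tangent vectors of $M$, independent of the tangential schemes and of $A$; this holds because, after the degeneration, the two tangential schemes are supported at distinct general points of $M$ and the tangent vector coming from one factor is always paired with the tangential scheme coming from the other factor, so Lemma \ref{a2} applies on $M$ in both computations.
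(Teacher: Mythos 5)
Your proof is correct, but your degeneration is not the one the paper uses. The paper specializes only one of the two tangential schemes of $X$, to a scheme $B$ of type $(5,2)$ with respect to $M$, and leaves the other, $Z_1$, general in the threefold: the trace is then $(B\cap M)\cup A$ of degree $8$, handled by \cite{co} and Remark \ref{stupid} (a general $2$-point of $M$ sits inside a general tangential scheme of $M$), while the residue is $Z_1\cup\Res_M(B)$, a tangential scheme of $X$ plus a tangent vector lying in $M$, dispatched by Lemma \ref{a3} using $h^1(\Ii_{Z_1}(1,3))=0$ and $h^0(\Ii_{Z_1}(0,3))=1$. You instead specialize both tangential schemes, one to type $(5,2)$ and one to a virtual scheme of type $(2,5)$, so that both the trace ($5+2+3=10\le 12$) and the residue ($2+5=7$, entirely contained in $M$, with $7\le 8$ after restricting to $\Oo_M(1,3)$ and $h^1(\Oo_X(0,3))=0$) are verified purely on the surface $M\cong\PP^1\times\PP^1$ via Lemma \ref{c00}, the $m=1$ case of Proposition \ref{c4}, and Lemma \ref{a2}. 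Your route costs an invocation of the differential Horace lemma, which the paper's proof of this particular statement avoids, but it buys a completely two-dimensional verification; the bookkeeping ($\Res_M(A)=\emptyset$, the genericity and independence of the tangent vectors from the tangential schemes in both the trace and the residue) is correctly accounted for. One small caveat on your closing commentary: the claim that the mixed choice $(5,2)/(2,5)$ is \emph{forced} by the degree balance only rules out specializing both schemes to the same type; it overlooks the paper's simpler option of specializing just one of them and leaving the other off $M$.
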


\begin{proof}
We specialize $Z_2$ to $Z_1\cup B$ with $B$ a scheme of type $(5,2)$ with respect to $M$. We have $h^1(M,\Ii _{(B\cap M)\cup
A}(2,3)) =0$, because by \cite{co} two general tangential schemes of $M$ give $10$ independent conditions to $H^0(\Oo
_M(2,3))$. Thus it is sufficient to prove that $h^1(\Ii _{Z_1\cup E}(1,3))=0$, where
$E= \Res _M(B)$ is a general tangent vector of $M$, i.e. it is sufficient to prove that $h^0(\Ii _{Z_1\cup E}(1,3)) = 3$. Since $\deg (E) \le 6$, by Lemma \ref{a3} it is sufficient to observe
that $h^1(\Ii _{Z_1}(1,3)) =0$ and that $h^0(\Ii _{Z_1}(0,3)) \le 3$ (we have $h^0(\Ii _{Z_1}(0,3)) =1$). 
\end{proof}

\begin{lemma}\label{c3}
Set $X:= \PP^2\times \PP^1$. Let $B\subset X$ be a general union of $5$ tangential schemes and one 2-scheme. Then $h^1(\Ii _B(3,3)) =0$ and $h^0(\Ii _B(3,3)) =1$.
\end{lemma}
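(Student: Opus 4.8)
The plan is to run a single differential Horace step with respect to the divisor $M=\PP^1\times\PP^1\in|\Oo_X(1,0)|$, for which $\Oo_X(3,3)_{|M}=\Oo_M(3,3)$ and $\Oo_X(3,3)(-M)=\Oo_X(2,3)$. Here $n=\dim X=3$, so a tangential scheme has degree $7$ and the $2$-point has degree $4$; thus $\deg B=5\cdot 7+4=39$, while $h^0(\Oo_X(3,3))=40$. Since these differ by $1$, the two claimed equalities are equivalent and it is enough to prove $h^1(\Ii_B(3,3))=0$. I would specialize three of the five tangential schemes to schemes of type $(5,2)$ with respect to $M$ and the $2$-point to a virtual scheme of type $(1,3)=(1,n)$ with respect to $M$, leaving the remaining two tangential schemes $Z_2$ general (hence off $M$). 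With this choice the trace on $M$ is a general union of three tangential schemes of $M$ and one point, of degree $15+1=16=h^0(\Oo_M(3,3))$, and the residue is $\Res_M(B)=Z_2\cup A\cup\Theta$, where $A$ is the $2$-point of $M$ produced by the type $(1,3)$ scheme and $\Theta$ is a general union of three tangent vectors of $M$, of total degree $14+3+6=23$.

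For the trace I would invoke \cite{co}: three general tangential schemes of $\PP^1\times\PP^1$ impose $15$ independent conditions to $\Oo_M(3,3)$, so $h^0(\Ii_{3\mathrm{\,tang}}(3,3))=1$, and one further general point then kills the last section. Hence the trace imposes $16$ independent conditions and $h^1(M,\Ii_{B\cap M}(3,3))=0$. By the differential Horace lemma (in the form set up in Section~2, using the types $(5,2)$ and $(1,3)$ with respect to $M$), it then remains to prove $h^1(\Ii_{\Res_M(B)}(2,3))=0$; since $\deg\Res_M(B)=23$ and $h^0(\Oo_X(2,3))=24$, this is exactly the statement $h^0(\Ii_{Z_2\cup A\cup\Theta}(2,3))=1$.

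The residue is where the actual work lies, and I would handle it by chaining Lemmas \ref{c3.2} and \ref{c3.1} with Lemma \ref{a3}. Set $W:=H^0(\Ii_{Z_2\cup A}(2,3))$. By Lemma \ref{c3.2} we have $h^1(\Ii_{Z_2\cup A}(2,3))=0$, so $\dim W=24-17=7$. Because $Z_2$ is general and $A\subset M$, we have $\Res_M(Z_2\cup A)=Z_2$, and Lemma \ref{c3.1} gives $h^0(\Ii_{Z_2}(1,3))=0$; therefore $W(-M)=H^0(\Ii_{Z_2}(1,3))=0$, i.e. the restriction $W\to H^0(\Oo_M(2,3))$ is injective and $\dim W_{|M}=7$. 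Now Lemma \ref{a3} applied to $D=M$ and the general tangent vectors $\Theta$ of $M$ (so $x=0$, $y=3$, $x+2y=6\le 7$) gives $\dim W(-\Theta)=\dim W-\min\{\dim W_{|M},6\}=7-6=1$, that is $h^0(\Ii_{Z_2\cup A\cup\Theta}(2,3))=1$ and $h^1(\Ii_{\Res_M(B)}(2,3))=0$. Together with the trace computation this yields $h^1(\Ii_B(3,3))=0$, whence $h^0(\Ii_B(3,3))=40-39=1$.

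I expect the main obstacle to be the bookkeeping of the differential step rather than any hard new estimate: one must verify that the type $(1,3)$ virtual $2$-point contributes exactly the scheme $A$ of Lemma \ref{c3.2} to the residue and a single point to the trace, and that $\dim W_{|M}=\dim W$, which is precisely what makes the three residual tangent vectors cut the $7$-dimensional trace $W_{|M}$ down to dimension $1$. The numerology is tight — the trace fills $\Oo_M(3,3)$ exactly ($16$) and the residue nearly fills $\Oo_X(2,3)$ ($23$ of $24$) — so it is essential that only configurations of at most three tangential schemes of $\PP^1\times\PP^1$ (together with points and tangent vectors) ever occur on $M$, all of which are covered by \cite{co} and so avoid the defective case $(2,3,0,2)$ of Theorem \ref{uu1}.
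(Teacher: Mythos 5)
Your proposal is correct and follows essentially the same route as the paper's own proof: the same specialization (three tangential schemes of type $(5,2)$ and the $2$-point as a virtual scheme of type $(1,3)$ with respect to $M$, with $Z_2$ left general), the same use of \cite{co} for the trace, and the same combination of Lemmas \ref{c3.2}, \ref{c3.1} and \ref{a3} for the residue. Your bookkeeping of the dimensions ($\dim W=7$, $\dim W_{|M}=7\ge 6$) is in fact slightly more explicit than the paper's.
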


\begin{proof}
Since $\deg (B) = 39 = h^0(\Oo _X(3,3)) -1$, the two assertions are equivalent. Set $M:= \PP^1\times \PP^1$ seen as an element
of $|\Oo _X(1,0)|$. We have $h^0(\Oo _M(3,3))=16$. We specialize $B$ to a scheme $Z_2\cup A$ with $A$ general
union of $3$ schemes of type $(5,2)$ with respect to $M$ and a scheme type $(1,3)$ with respect to $M$. Since $M\cap A$ is a
general union of $3$ tangential schemes of $M$ and one point, by \cite{co} we have $h^i(M,\Ii _{M\cap A}(3,3)) =0$, $i=0,1$.
Thus it is sufficient to prove that $\Res _M(A)\cup Z_2$ imposes independent conditions to $H^0(\Oo _X(2,3))$. The scheme $\Res
_M(A)$ is a general union of one 2-point of $M$, $A$, and $3$ general tangent vectors of $M$. By Lemma \ref{a3} it is
sufficient to prove that $h^1(\Ii _{Z_2\cup A}(2,3)) =0$ (true by Lemma \ref{c3.2}) and that $h^0(\Ii _{Z_2}(1,3)) \le h^0(\Oo
_M(2,3))-\deg (Z_2\cup A\cup E)) = 1$ (true by Lemma \ref{c3.1}). 
\end{proof}

\begin{lemma}\label{c5}
Set $X:= \PP^3\times \PP^1$ and $M:= \PP^2\times \PP^1\in |\Oo _X(1,0)|$. Let $A\subset X$ be a general $2$-point of $X$ with $A_{\red}\in M$ and $B$ a scheme of type $(4,4)$ with respect to $M$ such that $B\cap M = \Res _M(B)$ is
one 2-point of $M$. Then $h^0(\Ii _{Z_2\cup A\cup B}(2,3)) \le 10$.
\end{lemma}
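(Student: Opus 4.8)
The plan is to run the differential Horace method against the divisor $M=\PP^2\times\PP^1\in|\Oo_X(1,0)|$, exactly as in the proof of Lemma \ref{c2} one dimension down. Since $\dim X=4$ we have $\deg(Z_2\cup A\cup B)=2\cdot 9+5+8=31$ and $h^0(\Oo_X(2,3))=40$, while $h^i(\Oo_X(2,3))=0$ for $i>0$; hence the asserted bound $h^0\le 10$ is equivalent to $h^1(\Ii_{Z_2\cup A\cup B}(2,3))\le 1$. I would prove the latter through the residual exact sequence \eqref{eq++ss10} for $M$, which (using $\Oo_X(2,3)(-M)=\Oo_X(1,3)$ and $\Oo_X(2,3)_{|M}=\Oo_M(2,3)$) gives
$$h^1(\Ii_{Z_2\cup A\cup B}(2,3))\le h^1(M,\Ii_{(Z_2\cup A\cup B)\cap M}(2,3))+h^1(X,\Ii_{\Res_M(Z_2\cup A\cup B)}(1,3)).$$
First I would specialize $Z_2$ to a general union of \emph{two} schemes of type $(7,2)=(2n-1,2)$ with respect to $M$; by semicontinuity it then suffices to bound the two right-hand terms in this special position. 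After this specialization the trace on $M$ is a general union of two tangential schemes of $M$ and the two $2$-points $A\cap M$, $B\cap M$ of $M$, so it has degree $14+4+4=22$ against $\Oo_M(2,3)$ (where $h^0(\Oo_M(2,3))=24$); the residual $\Res_M(Z_2\cup A\cup B)$ is a general union of two tangent vectors of $M$, the point $\Res_M(A)$ and the $2$-point $\Res_M(B)$ of $M$, of degree $4+1+4=9$ against $\Oo_X(1,3)$ (where $h^0(\Oo_X(1,3))=16$).

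The residual term is the easy one. Since every component of $\Res_M(Z_2\cup A\cup B)$ lies on $M$, its residue with respect to $M$ is empty, so the sequence \eqref{eq++ss10} and $h^1(\Oo_X(0,3))=0$ reduce it to $h^1(M,\Ii_{(\text{two tangent vectors})\cup(\text{point})\cup(2\text{-point})}(1,3))=0$ on $\PP^2\times\PP^1$; with degree $9$ against $h^0(\Oo_M(1,3))=12$ there is ample slack, and I would dispose of it by splitting off $M':=\PP^1\times\PP^1$ (handling the $\Oo_{M'}(1,3)$-trace directly and the leftover $\Oo_M(0,3)$-part by projection to $\PP^1$, as in Lemma \ref{c01}). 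For the restriction term I would descend once more to $M'$ and to the slice $\PP^2\times\{o\}$, leaning on Lemmas \ref{c00}, \ref{c01} on $\PP^1\times\PP^1$ and Lemmas \ref{c1}, \ref{c3.2} on $\PP^2\times\PP^1$, together with Proposition \ref{c4}. Throughout I would invoke Lemma \ref{a3} to count conditions exactly, rather than add the two Horace terms naively, since the naive split loses far too much here.

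The hard part will be the restriction term $h^1(M,\Ii_{(\text{two tangential schemes})\cup(\text{two }2\text{-points})}(2,3))=0$ on $\PP^2\times\PP^1$. Its degree $22$ falls only $2$ short of $h^0(\Oo_M(2,3))=24$, so there is essentially no slack; and a careless further residuation onto $M'$ sends two tangential schemes of $M$ (each contributing a tangential scheme of $M'$ of degree $5$) plus a $2$-point of $M'$ into $\Oo_{M'}(2,3)$, whose $h^0$ is only $12$: the trace then overshoots the target linear system and manufactures spurious $h^1$. The delicate point is therefore to distribute the two tangential schemes and the two $2$-points between $M'$ and its residual $\Oo_M(1,3)$ (or instead use the slice $\PP^2\times\{o\}$) so that every intermediate system stays non-special. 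Because the final goal $h^1(\Ii_{Z_2\cup A\cup B}(2,3))\le 1$ tolerates at most one unit of deficiency, this bookkeeping must be carried out exactly, and getting the right split of the $2$-points across the sub-divisor is where I expect the real work to lie.
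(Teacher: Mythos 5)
Your framework coincides with the paper's: the same divisor $M$, the same specialization of $Z_2$ to two schemes of type $(7,2)$ with respect to $M$, the same degree counts (trace of degree $22$ against $h^0(\Oo _M(2,3))=24$, residual of degree $9$ supported on $M$ against $h^0(\Oo _M(1,3))=12$), and the same observation that the residual term is harmless. The problem is that you stop exactly at the step that carries all the content. The required vanishing is $h^1(M,\Ii _{T_1\cup T_2\cup P_1\cup P_2}(2,3))=0$ on $M=\PP^2\times \PP^1$, where $T_1,T_2$ are tangential schemes of $M$ (degree $7$ each) and $P_1,P_2$ are $2$-points of $M$ (degree $4$ each); none of the results you invoke covers this configuration. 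Lemma \ref{c3.2} treats two tangential schemes plus a single $2$-point of the surface $\PP^1\times \PP^1$ (total degree $17$), and Lemma \ref{c00} treats one tangential scheme plus one $2$-point of $\PP^1\times \PP^1$ (degree $8$); the scheme you need has degree $22$ out of $24$ and is strictly beyond both. You correctly diagnose that sending both $T_i$ to $M'=\PP^1\times \PP^1$ overloads $\Oo _{M'}(2,3)$, but ``distribute them so that every intermediate system stays non-special'' is the statement of the problem, not its solution.

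The paper resolves this with a specific split plus one new auxiliary fact, and you would need to reproduce both. The split (its Claim 2): specialize $T_1$ to a scheme of type $(5,2)$ with respect to $N:=\PP^1\times \PP^1$, leave $T_2$ general (entirely off $N$), and specialize $P_1,P_2$ so that their reductions lie in $N$. The trace on $N$ is then one tangential scheme of $N$ plus two $2$-points of $N$, of degree $5+3+3=11$ against $h^0(\Oo _N(2,3))=12$; its $h^1$-vanishing is exactly the paper's Claim 1, proved by a further degeneration to a ruling $L\in |\Oo _N(1,0)|$, and it is precisely the ingredient that Lemma \ref{c00} does not supply. The residual with respect to $N$ is $T_2$ plus a tangent vector of $N$ plus two points, of degree $7+2+1+1=11$ against $h^0(\Oo _M(1,3))=12$, handled by Proposition \ref{c4} and Lemma \ref{a3}. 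Until you exhibit this (or an equivalent) distribution together with the degree-$11$ statement on $\PP^1\times \PP^1$, the proof is incomplete.
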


\begin{proof}
Take $N\in |\Oo _M(1,0)|$, $N\cong \PP^1\times \PP^1$.

\quad \emph{Claim 1:} Let $F\subset N$ be a general union of one tangential scheme and two $2$-points. Then $h^1(N,\Ii
_F(2,3))=0$.

\quad \emph{Proof of Claim 1:} Fix $L\in |\Oo _N(1,0)|$. We specialize $F$ to a general union $G$ of a tangential scheme and
two $2$-points with reduction contained in $L$. We have $h^i(L,\Ii _{G\cap L}(2,3)) =0$, $i=0,1$. We have $h^1(N,\Ii _{\Res
_L(G)}(1,3)) =0$ by \cite{co} and Lemma \ref{a3}.

We specialize $Z_2$ to a general union $U$ of two schemes of type
$(7,2)$ with respect to
$M$. Thus
$M\cap (U\cup A\cup B)$ is a general union of two tangential schemes of $M$, one $2$-point of $M$ and one point of $M$.

\quad \emph{Claim 2:} $h^1(M,\Ii _{M\cap (U\cup A\cup B)}(2,3)) =0$.

\quad \emph{Proof of Claim 2:} We specialize $U\cup A\cup B$ to the union $U'$ of a tangential scheme, a scheme of type
$(5,2)$ with respect to $N$ and two $2$-points with reduction contained in $N$. By Claim 1 we have $h^1(N,\Ii _{U'\cap
N}(2,3))=0$. The scheme $\Res _N(U')$ is a general union of one tangential scheme, a general tangent vector of $N$ and two
points of $N$. By Lemma \ref{a3} we have $h^1(\Ii _{\Res _N(U')}(1,3))=0$, proving Claim 2.

Since $\Res _M(U\cup A\cup B)$ is a general union of
$2$ tangent vectors of $M$, one point of $M$ and one $2$-point of $M$ and $h^0(\Oo _X(1,3))-h^0(\Oo _X(0,3)) =h^0(\Oo _M(1,3))=
12$, we get the lemma.
\end{proof}

\begin{lemma}\label{c6}
Take $Y:=\PP^4$, $X:= Y\times \PP^1$, $D:= \PP^4\times \{o\}$ and $V:= H^0(\Oo _{\PP^4}(3))$. Then $\dim V[3](-Z_{12}) =
\dim V[3] -\deg (Z_{12})$ and $V[3](-Z_{13})=0$.
\end{lemma}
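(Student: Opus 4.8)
The plan is to compute both quantities by the differential Horace method of Section \ref{Sy}, peeling off the $\PP^1$-degree one unit at a time with respect to the divisor $D=\PP^4\times\{o\}$. Here $n=\dim X=5$ and $\alpha=\dim V=\binom{7}{4}=35$, so a tangential scheme of $X$ has degree $2n+1=11$, a tangential scheme of $D\cong\PP^4$ has degree $2n-1=9$, and $\dim V[t]=(t+1)\alpha$; thus $\dim V[3]=140$, $\deg Z_{12}=132$ and $\deg Z_{13}=143$, and the two assertions say exactly that $Z_a$ has the expected dimension with respect to $V[3]$ for $a=12$ and $a=13$. The reason this needs a separate argument is that Lemma \ref{y3} cannot be invoked: it requires $\alpha\ge 4n^2+2n-4=106$, whereas here $\alpha=35$. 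Concretely the bad numerology is $(f_1,e_1)=(3,4)$, so the inequality $f_1\ge e_1$ used throughout Section \ref{Sy} is violated. What does remain available is Assumption $\pounds$ for $(\PP^4,\Oo_{\PP^4}(3))$, i.e. the non-defectivity of $\tau(v_3(\PP^4))$; hence Lemma \ref{y1} gives $\dim V[0](-Z_a)=\max\{0,35-9a\}$, and Lemma \ref{y2} applies verbatim and yields the exact values $\dim V[1](-Z_a)=\max\{0,70-11a\}$ for every $a$ (the only a priori doubtful index $a=2x_1+1=7$ is fine because $\tau_1=2$ makes part (d) of Lemma \ref{y2} give $0$).

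With these two base levels in hand, the whole content is the passage from the known level $V[1]$ up to $V[3]$, and the difficulty is that the intermediate Lemmas \ref{y2+=} and \ref{ny1} are also unavailable (they require $\alpha\ge(n+1)(2n-1)=54$). So I would perform the reductions by hand. The first reduction, from $V[3]$ to the level-$2$ problem, works as in Lemma \ref{y3}: specialising $7=3+4$ of the tangential schemes of $X$ to types $(9,2)$ and $(2,9)$ with respect to $D$ makes $Z_a\cap D$ fill $V$ by $\pounds$, and Lemma \ref{a3} reduces everything to computing $\dim V[2]$ of the residual union $Z_{a-7}\cup\Res_D(W)\cup\Res_D(W')$. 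The obstruction appears precisely here: the residue of the four $(2,9)$-schemes is four tangential schemes of $D$, of total degree $36>\alpha$, so a naive second Horace step with respect to $D$ over-imposes on $D$, wastes the excess, and only yields the far too weak bound $\dim V[1](-Z_{a-7})$ (equal to $15$ for $a=12$ and to $4$ for $a=13$).

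The key step is therefore the level-$2$ estimate, and I would carry it out by a specialisation tailored to $\alpha=35$, $n=5$ rather than the uniform one of Section \ref{Sy}. Two devices are available. First, one may mix in the honest type $(n,n,1)$, whose residue is an ordinary $2$-point of $X$, so as to keep the degree landing on $D$ at every level at most $\alpha=35$; here one must be careful never to let the resulting double points of $\PP^4$ specialise to the Alexander--Hirschowitz exceptional configuration of seven $2$-points in degree $3$, which is exactly the quadruple $(4,3,7,0)$ of Theorem \ref{uu1}. Alternatively---and this is probably cleaner---one may run the level-$2$ and level-$3$ reductions with respect to a hyperplane divisor $M=\PP^3\times\PP^1\in|\Oo_X(1,0)|$: since $\Oo_X(3,3)_{|M}=\Oo_{\PP^3\times\PP^1}(3,3)$ and $\Oo_X(3,3)(-M)=\Oo_X(2,3)$, and $\Oo_X(2,3)$ in turn restricts to $\Oo_{\PP^3\times\PP^1}(2,3)$ with residue $\Oo_X(1,3)$, the computation feeds directly into the already established Lemmas \ref{c5} (for $\Oo(2,3)$ on $\PP^3\times\PP^1$) and \ref{c4} (for $\Oo(1,3)$), together with the $m=3$ case of the $\Oo(3,3)$ statement.

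The case $a=12$ ($\dim V[3](-Z_{12})=8$) and the case $a=13$ ($V[3](-Z_{13})=0$) can then be closed together; in practice it is convenient to establish $a=12$ first and deduce $a=13$ by adjoining one more general tangential scheme and checking that, being of degree $11>8$, it imposes independent conditions on the $8$-dimensional system $V[3](-Z_{12})$, which one verifies is not tangentially degenerate. The main obstacle throughout is the single level-$2$ estimate: because $f_1<e_1$ the uniform specialisation of Lemma \ref{y3} breaks, and one must balance the specialisation types by hand for these specific numbers while steering around the $(4,3)$ Alexander--Hirschowitz exception.
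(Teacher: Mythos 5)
There is a genuine gap, and it sits at the foundation of your plan. You assert that ``Assumption $\pounds$ for $(\PP^4,\Oo_{\PP^4}(3))$'' remains available, deduce $\dim V[0](-Z_a)=\max\{0,35-9a\}$ from Lemma \ref{y1}, and claim Lemma \ref{y2} applies verbatim. But $\pounds$ \emph{fails} here: four general tangential schemes of $\PP^4$ have degree $36>35$, yet they do not impose independent conditions on cubics (the cube of the hyperplane spanned by their four support points survives). This is one of the known exceptional cases of Abo--Vannieuwenhoven ($d=3$, $k=n=4$), and the paper's own proof of this lemma opens by recording precisely this fact: three general tangential schemes of $Y$ impose independent conditions on $V$, but four do not. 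The failure is not an abstract worry: your chosen first specialization uses $e_1=4$ schemes of type $(2,9)$, whose residue with respect to $D$ is exactly four general tangential schemes of $D\cong\PP^4$ --- the defective configuration. (The intersection step is actually harmless, since $Z_a\cap D$ is three tangential schemes plus four general tangent vectors, which is not the bad configuration.) You correctly sense that ``the obstruction appears precisely here,'' but you misdiagnose it as a mere degree overflow $36>\alpha$ to be wasted; the real problem is that even $35$ of those $36$ conditions are dependent. You also guard against the wrong exception --- the Alexander--Hirschowitz configuration $(4,3,7,0)$ of seven double points --- while missing the tangential exception that actually bites.

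The paper's proof is built around this defect by being deliberately non-uniform: it specializes only $3+3$ tangential schemes at the first level (so $\deg(A\cap D)=33$, only an $h^1$ statement on $D$ is needed, and the residue carries only \emph{three} tangential schemes of $D$, a non-defective number), and then three more schemes of type $(9,2)$ at the second level, so that the six tangential schemes of $D$ accumulated there are enough to force $h^0=0$ despite the defect at four; the stray tangent vectors are discarded via Lemma \ref{a3}. Your proposal, by contrast, never actually performs the level-$2$ estimate: you offer two possible ``devices'' (mixing in schemes of type $(n,n,1)$, or switching to the divisor $M=\PP^3\times\PP^1$) without executing either, and your passage from $a=12$ to $a=13$ rests on the unverified assertion that the $8$-dimensional system $V[3](-Z_{12})$ is ``not tangentially degenerate,'' which is essentially the statement to be proved. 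As written, the proposal is a plan resting on a false premise, with the two hardest steps left open.
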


\begin{proof}
We recall (\cite{bcgi, cgg}) that $\dim V = 35$, $3$ general tangential points of $Y$ give independent conditions to $V$, but
$4$ not. We specialize $Z_x$, $x\in \{12,13\}$, to the union of $Z_{x-6}$ and the union $A$ of $3$ schemes of type $(9,2)$ with
respect to
$D$ and $3$ schemes of type $(2,9)$ with respect to $D$. Since $h^i(D,\Ii _{D\cap A}(3)) =0$, $i=0,1$, by Lemma \ref{a2}
it is sufficient to prove that $Z_6\cup \Res _D(A)$ gives independent conditions to $V[2]$ and that $V[2](-(Z_7\cup \Res
_D(A)) =0$. Let $B\subset D$ be a general union of $3$ tangential schemes of $D$. By Lemma \ref{a3} it is sufficient to
prove that
$Z_6\cup B$ gives independent conditions to
$V[2]$, that
$\dim V[1](-Z_6) \le 6$, that $\dim V[2](-(Z_7\cup B)) \le 6$ and that $V[1](-Z_7) =0$. To control $V[2]$ we specialize
$Z_{x-6}\cup B$ to $Z_{x-9}\cup B\cup E$ with $E$ a general union of $3$ schemes of type $(9,2)$, use that $h^i(\Ii _{(D\cap
E)\cup B}(3)) =0$, $i=0,1$, and then use again Lemma 3. To prove that $V[1](-Z_6) = 0$ we specialize $Z_6$ to a general
union of $3$ schemes of type $(9,2)$ and $3$ schemes of type $(2,3)$.
\end{proof}

\begin{proof}[Proof of Proposition \ref{x1}:]
Since the case $m=1$ is true by \cite{co}, we may assume $m\ge 2$. Fix a hyperplane $H\subset \PP^m$ and set $M:= H\times \PP^1$ and $X:= \PP^m\times \PP^1$.
We have $h^0(\Oo _X(x,y)) =(y+1)\binom{m+x}{m}$ for all $(x,y)\in \NN^2$.
A tangential scheme of $X$ (resp. $M$) has degree $2m+3$ (resp. $2m+1$). We write $Z_x$ for a general union of $x$ tangential
schemes of $X$.

\quad (a) Take $m=2$.

\quad (a1) Consider $|\Oo _X(2,3)|$. We have $h^0(\Oo _X(2,3)) = 24$ and $h^0(\Oo _M(2,3)) = 12$. 
To prove that $h^1(\Ii _{Z_3}(2,3)) =0$ we specialize $Z_3$ to $Z_1\cup W$ with $W$ a general union of $2$ schemes of type $(5,2)$ with respect to $M$. We have $h^1(M,\Ii _{W\cap M}(2,3)) =0$ by \cite{co} and hence it is sufficient to prove that $h^1(\Ii _{Z_1\cup \Res _M(W)}(1,3))=0$.
Since $h^1(\Ii _{Z_1}(1,3))=0$, $h^0(\Ii _{Z_1}(0,3)) = 4-3$ and $\Res _M(W)$ is a general union of $2$ tangent vectors of $M$, it is sufficient to apply Lemma
\ref{a3}.

To get 
that $h^0(\Ii _{Z_4}(2,3)) =0$  we specialize $Z_4$ to $Z_1\cup W\cup W'$ with $W$ as above and $W'$ a scheme of type
$(3,3,1)$ with respect to $M$. It is sufficient to prove that $h^0(\Ii _{Z_1\cup \Res _M(W) \cup \Res _M(W')}(1,3)) =0$. By
Lemma \ref{a3} it is sufficient to prove that $h^0(\Ii _{Z_1\cup \Res _M(W')}(1,3))  \le 2$ (true by Lemma \ref{c++01}) and
$h^0(\Ii _{Z_1\cup \Res _M(\Res _M(W'))}(0,3))=0$ (true, because $\Res _M(\Res _M(W'))$ is a general point of $M$ and so $\deg
(\pi _2(Z_1\cup \Res _M(\Res _M(W')))) =4$).

\quad (a2) Consider $|\Oo _{\PP^2\times \PP^1}(3,3)|$. We have $h^0(\Oo _X(3,3)) = 40$ and $h^0(\Oo _M(3,3)) =16$. To prove that $h^1(\Ii _{Z_5}(3,3)) =0$ we specialize $Z_5$ to
$Z_2\cup A$ with $A$ a general union of $3$ schemes of type $(5,2)$ with respect to $M$. Since $h^1(M,\Ii _{A\cap M}(3,3))
=0$ (\cite{co}), it is sufficient to prove that $h^1(\Ii _{Z_2\cup E}(2,3)) =0$, where $E$ is a general union of $2$ tangent
vectors. By Lemma \ref{a3} it is sufficient to use that $h^1(\Ii _{Z_2}(2,3)) =0$ (step (a1))  and that $h^0(\Ii _{Z_2}(1,3))
=0$ (Proposition \ref{c4}). To prove that
$h^0(\Ii _{Z_6}(3,3)) =0$  we specialize $Z_6$ to  $Z_2\cup A\cup A'$ with $A$ a general union of
$3$ schemes of type $(5,2)$ with respect to $M$ and $A'$  a scheme of type $(1,3,3)$ with respect to $M$. We have $h^1(M,\Ii
_{A\cap M}(3,3)) =0$, $i=0,1$, by \cite{co}.
The scheme $\Res _M(A)$ is a general union of $3$ tangent vectors of $M$ and a scheme $E$ of type $(3,3)$ with respect to $M$
such that $E\cap M$ and $\Res _M(E)$ are $2$-points of $M$. We have $h^0(\Ii _{Z_2\cup \Res _M(E)}(1,3))  \le 2$ by Lemma \ref{c1}.
By Lemma \ref{a3} it is sufficient to prove that $h^0(\Ii _{Z_2\cup E}(2,3)) \le 6$, which is true by Lemma \ref{c2}.

\quad (b) Take $m=3$. Thus a tangential scheme has degree $9$.

\quad (b1) We have $h^0(\Oo_X(2,3)) = 40$ and $ h^0(\Oo _M(2,3)) = 24$. To prove that $h^1(\Ii _{Z_4}(2,3)) =0$ we specialize
$Z_4$ to $Z_1\cup W$ with $W$ a general union of $3$ schemes of type $(7,2)$ with respect to $M$. We have $h^1(M,\Ii _{W\cap
M}(2,3)) =0$ by step (a2) and hence it is sufficient to prove that $h^1(\Ii _{Z_1\cup \Res _M(W)}(1,3))=0$. By Lemma \ref{a3}
it is sufficient to observe that $h^1(\Ii _{Z_1}(1,3)) =0$ (Proposition \ref{c4}) and that $1 = h^0(\Ii _{Z_1}(0,3))
\le 5= \max \{0, h^0(\Oo _X(1,3)) -\deg (Z_1\cup \Res _M(W))\}$.

\quad (b2) We have $h^0(\Oo _X(3,3))  = 80$ and $h^0(\Oo _M(3,3)) = 40$. To prove that $h^0(\Ii _{Z_9}(3,3)) =0$ we specialize $Z_9$ to $Z_2\cup A$ with $A$
a general union of $5$ schemes of type $(7,2)$ with respect to $M$, one scheme of type $(4,4,1)$ with respect to $M$ and one
scheme of type $(1,4,4)$ with respect to $M$. We have $h^i(M,\Ii _{A\cap M}(3,3)) =0$, $i=0,1$, by Lemma \ref{c3}. Thus it is
sufficient to prove that
$h^0(\Ii _{Z_2\cup E\cup F\cup G}(2,3))=0$, where $E$ is a general union of $5$ tangent schemes of $M$, $F$ has type $(4,1)$ with respect to $M$  and $G$ has type $(4,4)$ with respect to $M$ (both $G\cap M$ and $\Res _M(G)$ are 2-points of $M$. By Lemma \ref{a3} it is sufficient to prove
that $h^0(\Ii _{Z_2 \cup \Res _M(F)\cup \Res _M(G)}(2,3)) \le 10$ (true by Lemma \ref{c5}) and that $h^0(\Ii _{Z_2}(1,3)) =0$ (true by the case $m=3$ of
Proposition \ref{c4}).

The proof that $h^1(\Ii _{Z_8}(3,3)) =0$, i.e. that $h^0(\Ii _X(3,3)) =8$ is easier; it may be done with minimal modifications to the proof that $h^0(\Ii _{Z_9}(3,3)) =0$, but
we prefer to show that it  follows  from the statement that $Z_9$ imposes $9^2 -1$ conditions to
$H^0(\Oo _X(3,3))$. Indeed, it first implies that $h^0(\Ii _{Z_7}(3,3))= 4\cdot 20 -7\times 9$ and then that the addition of $2$
general $Z_1$ gives $17$ independent conditions to $H^0(\Ii _{Z_7}(3,3))$. Thus adding the first $Z_1$ must give $9$
independent conditions to $H^0(\Ii _{Z_7}(3,3))$.

\quad ({c}) Take $m=4$. Thus a tangential scheme has degree $11$. Fix a hyperplane $H'$ of $M$ and a hyperplane $H''$ of $H'$ and set $N:= H'\times \PP^1$
and $N':= H''\times \PP^1$. We have $h^0(\Oo _N(3,3)) =40$ and $h^0(\Oo _{N'}(3,3)) = 16$.
Let $O' \subset N'$ be a general union of $3$ tangential schemes of $N'$. By \cite{co} we have $h^1(N',\Ii _{O'}(3,3)) =0$.

\quad \emph{Claim 1:} Let $O \subset N$ be a general union of $5$ tangential schemes of $N$ and one 2-point of $N$. Then $h^1(N,\Ii _O(3,3)) =0$.

\quad \emph{Proof of Claim 1:} We specialize $O$ to $A_1\cup A_2$ with $A_1$ a general union of $2$ tangential schemes of $N$ and one 2-point of $N$
and $A_2$ is a general union of $3$ schemes of of type $(5,2)$ with respect to $N'$. We just proved that $h^1(N',\Ii _{A_2\cap N'}(3,3)) =0$. By Lemma \ref{a3}
it is sufficient to prove that $h^1(N,\Ii _{A_1}(2,3)) =0$ and that $h^0(N,\Ii _{A_1}(1,3)) =0$. We have $h^1(N,\Ii _{A_1}(2,3)) =0$ by step (a1),
because $A_1$ is contained in a general union of $3$ tangential schemes of $N$. We have $h^0(N,\Ii _{A_1}(1,3)) =0$ by Proposition \ref{c4}, because $A_1$ contains a general union of $2$ tangential schemes of $N$.

\quad (c1) We have $h^0(\Oo _X(2,3)) = 60$ and $h^0(\Oo _M(2,3)) =40$.  To prove that $h^1(\Ii _{Z_5}(2,3)) =0$ we specialize $Z_5$ to $Z_1\cup E$ with $E$ a general union
of $4$ schemes of type $(9,2)$ with respect to $M$. By step (b1) and Lemma \ref{a3} it is sufficient to observe that $h^1(\Ii _{Z_1}(1,3)) =0$ (Proposition \ref{c4}) and that
$1 = h^0(\Ii _{Z_1}(0,3)) \le  1= \max \{0,h^0(\Oo _X(1,3)) - \deg (Z_1)-\deg (\Res _M(E))\}$. 

\quad (c2) We have $h^0(\Oo _X(3,3)) = 140$ and $h^0(\Oo _M(3,3)) =80$.

\quad \emph{Claim 2:} Let $K \subset M$ be a general union of $8$ tangential schemes of $M$ and one 2-point of $M$. Then $h^1(M,\Ii _K(3,3)) =0$.

\quad \emph{Proof of Claim 2:}  We specialize $K$ to $B_1\cup B_2\cup B_3$ with $B_1$ a general union of $3$ tangential schemes
of $M$, $B_2$ a scheme of type $(4,1)$ with respect to $N$ and $B_3$ a general union of $5$ schemes of type $(7,5)$ with
respect to
$N$. Claim 1 gives
$h^1(N,\Ii _{B_2\cap N}(3,3)=0$. By Lemma \ref{a3} it is sufficient to prove that $h^1(M,\Ii _{B_1}(2,3))=0$ (true by step (a)) and that
$h^0(M,\Ii _{B_1}(1,3)) =0$ (true by Proposition \ref{c4}). To prove that $h^1(M,\Ii _{B_1}(2,3))=0$

To prove that $h^1(\Ii _{Z_{12}}(3,3)) =0$ we specialize $Z_{12}$ to $Z = Z_3\cup F\cup G$ with $F$
a general union of $8$ schemes of type $(9,2)$ with respect to $M$ and $G$ a scheme of type $(5,5,1)$ with respect to $M$. By Claim
2 we have
$h^1(M,\Ii _{Z\cap M}(3,3)) =0$. The scheme $\Res _M(Z)$ is a general union of $Z_3$ and $U:= \Res _M(G)$ (which is a 2-point
of $X$ whose reduction is a general $p\in M$). By Lemma \ref{a3} it is sufficient to prove $h^1(\Ii _{Z_3 \cup U}(2,3)) =0$
and $h^0(\Ii _{Z_3\cup \{p\}}(1,3)) =0$ (true by Proposition \ref{c4}). To prove that $h^1(\Ii _{Z_3 \cup U}(2,3))
=0$ we specialize $Z_3\cup U$; we are allowed to use step (c1), because $Z_3\cup U$ is projectively equivalent to a subscheme of a general
union of $4$ tangential schemes of $X$.
\end{proof}

\section{Joins of the varieties and its tangent developable}

For any $X$, $\Ll \in \mathrm{Pic}(X)$ and $V\subseteq H^0(\Ll)$ consider the following Assumption $\pounds \pounds$ which the
pair $(X,V)$ may have:

\quad \textbf{Assumption} $\pounds \pounds$: For a general union $Z$ of finitely many tangential schemes
of
$X$ and finitely many 2-points of $X$ the linear system $V(-Z)$ has the expected dimension $\max \{\alpha -\deg (Z), 0\}$.

Some of the statements of section \ref{Sy} may be improved (allowing a lower $\alpha$) if $V$ has $\pounds \pounds$.

In this section (except Lemmas \ref{oo2} and \ref{cd1}) we take $X = \PP^n$ for some $n$ and $V = H^0(\Oo _X(t))$. Let
$Z_{a,b}$ denote a general union of $a$ 2-points and $b$ tangential schemes of $X$.

\begin{proposition}\label{ux2}
Take $X=\PP^2$. We have $h^1(\Ii _{Z_{a,b}}(t))\cdot h^0(\Ii _{Z_{a,b}}(t))\ne 0$ if and only if
$(t,a,b)\in
\{(2,2,0), (4,5,0),(3,0,2)\}$, with $\delta:= h^1(\Ii _{Z_{a,b}}(t)) =h^0(\Ii _{Z_{a,b}}(t)) =1$ in each case and with $|\Ii
_{Z_{a,b}}(t)|$ either a multiple line or (case $(4,5,0)$) a multiple conic. 
\end{proposition}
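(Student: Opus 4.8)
The plan is to reduce to a finite check for each $t$, clear the pure cases $ab=0$ by citation, and then rule out every mixed case $ab\ne 0$ by induction on $t$.

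Since $\deg Z_{a,b}=3a+5b$ (a $2$-point of $\PP^2$ has degree $3$ and a tangential scheme degree $5$) and $h^0(\Oo_{\PP^2}(t))=\binom{t+2}{2}$, the product $h^0(\Ii_{Z_{a,b}}(t))\cdot h^1(\Ii_{Z_{a,b}}(t))$ can be nonzero only when $Z_{a,b}$ fails to impose independent conditions while $|\Ii_{Z_{a,b}}(t)|\ne\emptyset$; by Remark \ref{stu1} this leaves only finitely many $(a,b)$ for each $t$, all with $3a+5b$ close to $\binom{t+2}{2}$. For $b=0$ this is the Alexander--Hirschowitz classification for $\PP^2$: the only defective systems of double points are $(t,a)=(2,2)$, whose member is the double line through the two points, and $(t,a)=(4,5)$, whose member is the double conic through the five points; these give $(2,2,0)$ and $(4,5,0)$. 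For $a=0$ it is the tangential Veronese theorem of Abo--Vannieuwenhoven (\cite{av}), whose only plane exceptional case is $(t,b)=(3,2)$, with member a triple line, giving $(3,0,2)$. In all three triples $\deg Z_{a,b}=\binom{t+2}{2}$, so $h^0=h^1=\delta$, and exhibiting the multiple line / conic shows $\delta=1$.

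For the mixed cases $a,b>0$ I would induct on $t$, after first strengthening the statement to an inductive hypothesis $H_t$ covering a general union of $2$-points, tangential schemes, tangent vectors, and simple points of $\PP^2$ with respect to $\Oo(t)$; this enlargement is forced because the Horace reduction below produces tangent vectors and low-degree residual pieces. The small values of $t$ are settled directly from the finite list supplied by Remark \ref{stu1}. For the inductive step I fix a line $D\subset\PP^2$, so that $D\cong\PP^1$, $\Oo_D(t)=\Oo_{\PP^1}(t)$ and $\Res _D$ lowers the twist to $\Oo(t-1)$. Following the differential Horace types recalled before Remark \ref{stu1}, I would specialize a controlled number of the $2$-points to type $(2,1)$ and of the tangential schemes to type $(3,2)$ (or virtual type $(2,3)$) with respect to $D$, chosen so that $Z\cap D$ is a general union of tangential schemes, tangent vectors and points of $D$ of total degree $\le t+1=h^0(\Oo_{\PP^1}(t))$. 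By \cite{co} (equivalently Remark \ref{ff0}, via \cite{cm}) any such scheme on $\PP^1$ imposes independent conditions, so $Z\cap D$ fills $H^0(\Oo_D(t))$ as far as possible.

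With the restriction to $D$ controlled, Lemma \ref{a3} transfers the problem to $\Res _D(Z)$, which by the degree bookkeeping is a general union of fewer $2$-points and tangential schemes of $\PP^2$ together with the leftover tangent vectors and points, now with respect to $\Oo(t-1)$; Lemma \ref{a2} and Remark \ref{stupid} let me absorb the auxiliary pieces, after which $H_{t-1}$ applies and yields $h^0\cdot h^1=0$ unless the residual data matches one of the three exceptional triples, which is then excluded by a direct check. The hard part will be the bookkeeping in this step: splitting $3a+5b$ between $D$ and $\Res _D(Z)$ so that $Z\cap D$ nearly fills $H^0(\Oo_{\PP^1}(t))$ while $\Res _D(Z)$ stays a general union of admissible pieces of the expected type, and then verifying across the finitely many tight boundary configurations --- together with the enlarged exceptional list created by the auxiliary tangent vectors and points, and the low-$t$ base cases --- that no new mixed exceptional triple appears. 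This is a finite, if delicate, verification.
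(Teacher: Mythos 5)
Your plan coincides with the paper's own proof: Remark \ref{stu1} to reduce to finitely many $(a,b)$ for each degree, Alexander--Hirschowitz and \cite{av}/\cite{cgg} for the pure cases $ab=0$, and then induction on $t$ via the differential Horace method with respect to a line, specializing $2$-points to type $(2,1)$ and tangential schemes to type $(3,2)$ or virtual type $(2,3)$, with \cite{cm} handling the trace on the line and Lemma \ref{a3} absorbing the leftover tangent vectors and points. The only differences are organizational --- the paper does not enlarge the inductive statement to include tangent vectors and simple points, since Lemma \ref{a3} already disposes of those by an $h^0$ bound one degree lower --- and the ``delicate verification'' you defer (the special configurations such as $(a,b)=(2,1)$ at $t=3$, $(1,3)$ at $t=4$, and the $t=5,6$ specializations that land in exceptional cases) is precisely where the paper's proof spends its effort.
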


\begin{proof}
We obviously have $h^0(\Ii _{Z_{a,b}}(1)) =0$ if $(a,b)\ne (0,0)$. The set $|\Ii _{Z_{2,0}}(2)|$ is the double line
containing $(Z_{2,0})_{\red}$  and in this case we have $\delta =1$. Obviously $h^0(\Ii _{Z_{a,b}}(2)) =0$ if either $a+b\ge 3$ or
$a+b=2$ and
$b>0$. We have
$h^1(\Ii _{Z_{0,1}}(2)) =0$ (\cite{cgg}). For $t\ge 3$ we know that the only exceptional cases $(t,a,b)$ with either $a=0$ or $b=0$
are
$(4,5,0)$ (with $\delta =1$ and $|\Ii _{Z_{5,0}}(4)| = 2C$ with $C$ the smooth conic containing $(Z_{5,0})_{\red}$) and
$(3,0,2)$ (with
$\delta =1$, the only element of
$|\Ii _{Z_{0,2}}(3)|$ being the triple line spanned by $(Z_{0,2})_{\red}$). We only check the cases $ab\ne 0$. Fix a line
$L\subset
\PP^2$.

\quad (a) Take $t=3$. We specialize $Z_{a,b}$ to $E:= Z_{a-1,b-1}\cup A\cup B$ with $A$ a scheme of type $(2,2,1)$ with respect
to
$H$
(i.e. $A$ is a general tangential scheme with $A_{\red}\in H$) and $B$ of type $(2,1)$ with respect to $H$. Since $h^i(H,\Ii
_{E\cap H}(3))=0$, $i=1,2$, it is sufficient to prove that $\Res _H(E)$ gives the expected number of conditions to $H^0(\Oo
_{\PP^2}(2))$. Since $\Res _H(B)$ is a general point of $H$, by Lemma \ref{a3} it is sufficient to prove that $h^0(\Ii
_{Z_{a-1,b-1}\cup \Res _H(\Res _L(A))}(1)) \le \max \{0,6-3(a-1) -5(b-1) -1\}$ (obvious) and
that $Z_{a-1,b-1}\cup \Res _H(A)$ gives the expected number of conditions to $H^0(\Oo _{\PP^2}(2))$. We conclude, unless $(a,b)
=(2,1)$. In this case we specialize $Z_{2,1}$ to $E':= Z_{0,1}\cup B'$ with $B'$ general union of two schemes of type
$(2,1)$
with respect to $H$. By Lemma \ref{a3} it is sufficient to observe that $h^1(\Ii _{Z_{0,1}}(2)) =h^0(\Ii _{Z_{0,1}}(1)) =0$.

\quad (b) Take $t=4$. We specialize $Z_{a,b}$ to $F:= Z_{a-1,b-1} \cup M_1\cup M_2$ with $M_1$ a scheme of type $(3,2)$ with
respect to $H$ and $M_2$ a scheme of type $(2,1)$ with respect to $H$. By Lemma \ref{a3} it is sufficient to prove that
$h^0(\Ii _{Z_{a-1,b-1}}(2)) \le \max \{0,10 -3(a-1) -5(b-1)-3\}$ (obvious even when $(a-1,b-1) =(2,0)$) and that $Z_{a-1,b-1}$
gives the expect number of conditions to $H^0(\Oo _{\PP^2}(3))$ (true by the case $t=3$, unless $(a-1,b-1) = (0,2)$).
Now assume $a=1$ and $b=3$. Since $h^0(\Ii _{Z_{0,3}}(4)) =1$ (\cite{cgg}), we have $h^0(\Ii _{Z_{1,3}}(4)) =0$

\quad ({c}) Assume $t\ge 5$ and that in the use of Lemma \ref{a3} we do not land in one of the exceptional cases with $t'=3,4$
(for which see step (d)). By Remark \ref{stu1} we may assume $3a+5b \ge \binom{t+2}{2} -2$. Write $3x+2y = t+1$ with $y\in \{0,1,2\}$. If $b\ge x$
and
$a\ge y$ (resp.
$b>x$ and
$a<y$, i.e.
$a=1$ and $y=2$) we specialize
$Z_{a,b}$ to
$F:= Z_{a-y,b-x}\cup A\cup B$ (resp. $F':= Z_{0,b-x-1}\cup A\cup B'\cup B''$) with $A$ a general union of $x$ schemes of type
$(3,2)$ with respect to $H$, $B$ a general union of $y$ schemes of type $(2,1)$, $B'$ a scheme of type $(2,1)$ and $B''$ a
scheme of type $(2,3)$ with respect to $H$. By Lemma \ref{a3} it is sufficient to prove
that $h^0(\Ii _{Z_{a-y,b-x}}(t-2)) \le \max \{0,\binom{t+1}{2} -3(a-y) -5(b-x) -2x -y\}$, which is true by the inductive
assumption
if $(t-2,a-y,b-x)$ is not an exceptional case (resp.  $h^0(\Ii _{Z_{0,b-x-1}}(t-2)) \le \max \{0,\binom{t+1}{2} -5(b-x-1)
- 2x-1-3\}$) and that $Z_{a-y,b-x}$ (resp. $Z_{0,b-x-1}\cup \Res _H(B')$) gives the expected number of conditions
to $H^0(\Oo _{\PP^2}(t-1))$. The first check is true, unless $(t-1,a-y,b-x)$ is in one of the exceptional cases. For the check
of the `` resp. '' part we use the following trick. If $h^0(\Ii _{Z_{0,b-x-1}}(t-1)) =0$, then we are done. If $h^0(\Ii
_{Z_{0,b-x-1}}(t-1)) \ne 0$, then by the inductive assumption (note that $(t-1,0,b) \ne (3,0,2)$ because $t\ge 5$) we get
$h^1(\Ii _{Z_{0,b-x-1}}(t-1)) =0$. Since $3+5b \ge \binom{t+2}{2}-2$, the inductive assumption gives $h^0(\Ii
_{Z_{0,b-x-1}}(t-2))  =0$ (note that $(t-2,0,b-x-1) \ne (3,0,3)$ if $t=5$, because in this case we have $y=0$). Thus the
residual exact sequence of $H$:$$0 \to \Ii _{Z_{0,b-x-1}}(t-2) \to \Ii _{Z_{0,b-x-1}\cup \Res _H(B')}(t-1) \to \Ii _{\Res
_H(B'),H}(t-1) \to 0$$shows that we are done if the degree $3$ scheme $\Res _H(B')$ gives the expected number of conditions
to some (non-complete) linear system on $H$. This is a very particular case of \cite{cm}, since the support of $\Res _H(B')$
is a general point of $H$ and $\dim H=1$.

Now assume $b<x$. Write $t+1 = 3b+2y_1+y_2$ with $y_1\in \NN$ and $y_2\in \{0,1\}$. Since $3a+5b \ge \binom{t+2}{2} -2$,
we have $a\ge y_1+y_2$. We
degenerate $Z_{a,b}$ to $G:= Z_{a-y_1-y_2}\cup M_3\cup M_4$ with $M_3$ a general union of $y_1$ schemes of type $(2,1)$ with
respect to $H$ and $M_4$ a general union of $y_2$ schemes of type $(1,2)$ with respect to $H$. By Lemma \ref{a3} we conclude by
the inductive assumptions unless $t=5,6$ and $Z_{0,b-y_1-y_2}$ is an exceptional case for $H^0(\Oo _{\PP^2}(4))$.

\quad (d) Now assume $t=5,6$ and that in step ({c}) we landed in an exceptional case for $\Oo _{\PP^2}(t-1)$ or
$\Oo _{\PP^2}(t-2)$. If we landed in an exceptional case for $\Oo _{\PP^2}(t-1)$, then $t=5$ and we were considering
some $Z_{a',b'}$ with $a' =5$ and $b'=0$. For $t=5$ we have $(x,y) =(3,0)$. Thus $1\le b\le 2$. First assume
$b=2$. We get $a'= a$ and so $a=5$, $3\cdot 5 + 5\cdot 2 > 23 = \binom{7}{2} +2$, contradicting our assumption. If $b=1$ we
specialize $Z_{5,1}$ to $G' = Z_{2,1} \cup G''$ with $G''$ a general union of $3$ schemes of type $(2,1)$ with respect to $H$
and conclude by Lemma \ref{a3}.

Now assume that we landed in an exceptional case for $\Oo _{\PP^2}(t-2)$ (and hence $5\le t\le 6$). In each of the exceptional
cases the defect,
$\delta$, is $1$. Thus we only need to observe that in each of these cases we applied Lemma \ref{a3} to a scheme $W$
with $\deg (W\cap H) <t$.
\end{proof}

\begin{lemma}\label{oo2}
Take $X= \PP^1\times \PP^1$. We have $h^0(\Ii _{Z_{7,1}}(4,4)) =0$.
\end{lemma}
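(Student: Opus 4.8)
The plan is to reduce the statement to eight general $2$-points and then use the geometry of the unique section that survives. Since the tangential scheme $Z_v$ attached to a point $q$ and a tangent direction $v$ contains the $2$-point $2q$, the scheme $Z_{7,1}$ contains a general union $W$ of eight $2$-points of $X$ (the seven $2$-points of $Z_{7,1}$ together with $2q$). As $\deg W = 24 = h^0(\Oo_X(4,4))-1$, it suffices to prove $h^0(\Ii_W(4,4)) = 1$ with a generator that does not vanish on all of $Z_{7,1}$; then $h^0(\Ii_{Z_{7,1}}(4,4)) \le h^0(\Ii_W(4,4)) = 1$ forces $h^0(\Ii_{Z_{7,1}}(4,4)) = 0$. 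Let $C' \in |\Oo_X(2,2)|$ be the unique curve through the eight general points $p_1,\dots,p_7,q$ (unique because $h^0(\Oo_X(2,2)) = 9$ and eight general points impose independent conditions). Then $2C' \in |\Oo_X(4,4)|$ is singular along $C'$, hence at all eight points, so $2C' \in |\Ii_W(4,4)|$. For general points $C'$ is a smooth anticanonical (elliptic) curve, and $h^0(\Ii_W(4,4)) = 1$ follows either by citing the non-defectivity of eight double points of $\PP^1\times\PP^1$ for $\Oo_X(4,4)$, or directly: any section restricts on the elliptic curve $C'$ to a degree-$16$ divisor vanishing to order $\ge 2$ at the eight points, which for general $p_i,q$ is not linearly equivalent to $\Oo_X(4,4)|_{C'}$, so the section vanishes on $C'$ and one concludes using $h^0(\Ii_{\{p_1,\dots,q\}}(2,2)) = 1$. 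Thus $2C'$ spans $H^0(\Ii_W(4,4))$.

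The crux is to show that this unique section $2C'$ does not contain the tangential scheme $Z_v$ for a general direction $v$ at $q$. Work in local coordinates $(x,y)$ centred at $q$ with $v = \partial/\partial x$. Using the description of the tangential scheme of a surface through its type with respect to the fibre $\{y=0\}$ (a length-$3$ jet along $v$ together with a length-$2$ residual tangent vector), one computes $\Ii_{Z_v} = \mathfrak{m}_q^3 + (y^2)$, a scheme of length $5$. If $f \in \mathfrak{m}_q$ is a local equation of the smooth curve $C'$ at $q$, with tangent line $\{ax+by=0\}$, then $f^2 \equiv a^2 x^2 + 2ab\,xy \pmod{\mathfrak{m}_q^3 + (y^2)}$, and since $x^2,xy$ are nonzero and independent in $\Oo_{X,q}/(\mathfrak{m}_q^3+(y^2))$ we get $f^2 \in \Ii_{Z_v}$ if and only if $a=0$, i.e. if and only if $v$ is tangent to $C'$ at $q$. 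As $v$ is a general tangent direction it is not tangent to $C'$, so $Z_v \not\subseteq 2C'$.

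Consequently the generator $2C'$ of $H^0(\Ii_W(4,4))$ does not lie in $H^0(\Ii_{Z_{7,1}}(4,4))$, so $h^0(\Ii_{Z_{7,1}}(4,4)) < h^0(\Ii_W(4,4)) = 1$, whence $h^0(\Ii_{Z_{7,1}}(4,4)) = 0$. The main obstacle is the local identification $\Ii_{Z_v} = \mathfrak{m}_q^3 + (y^2)$ and the resulting tangency criterion $Z_v \subseteq 2C' \Leftrightarrow v$ tangent to $C'$; once this is established, generality of $v$ finishes the argument. Alternatively one could avoid the geometry and run a differential Horace descent along the two rulings using Lemmas \ref{a2} and \ref{a3}, specializing the tangential scheme to type $(3,2)$ and one $2$-point to type $(2,1)$ with respect to a fibre $D$ so that $\deg(Z\cap D)=5=h^0(\Oo_{\PP^1}(4))$ and reducing to a residual problem for double points on $\Oo_X(4,3)$; but the tangency computation above is shorter and exhibits the exceptional geometry explicitly.
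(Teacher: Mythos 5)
Your proof is correct, and it takes a genuinely different route from the paper's. The paper runs a one--step Horace descent with respect to a ruling $D\in|\Oo_X(0,1)|$: the tangential scheme is specialized to type $(3,2)$ and one $2$-point to type $(2,1)$ so that the trace on $D$ has degree $5=h^0(\Oo_{\PP^1}(4))$, and Lemma \ref{a3} reduces everything to the statements $h^1(\Ii_{Z_{6,0}}(4,3))=0$ and $h^0(\Ii_{Z_{6,0}}(4,2))=0$ about ordinary double points, quoted from \cite{laf}. You instead argue directly on $X$: the eight underlying $2$-points (the seven given ones plus $2q\subset Z_v$) confine any section to the one-dimensional space spanned by $2C'$, with $C'$ the unique anticanonical $(2,2)$-curve through the eight points, and the local identity $\Ii_{Z_v}=\mathfrak{m}_q^3+(y^2)$ shows $Z_v\subset 2C'$ exactly when $v$ is tangent to $C'$ at $q$, which fails for general $v$. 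This is self-contained (your elliptic-curve argument for $h^0(\Ii_W(4,4))=1$ is sound, since for eight general points $C'$ is smooth and $\Oo_X(4,4)|_{C'}-2\sum p_i$ is a nontrivial degree-zero bundle) and it has the merit of exhibiting the geometry: it explains why this configuration is a near-miss, in exact parallel with the genuinely exceptional case $(n,t,a,b)=(3,4,7,1)$ of Proposition \ref{ux3}, where the seven points \emph{together with the tangent vector} cut out the quadric $Q$, so $v$ is forced to be tangent to $Q$ and $2Q$ survives; here the eight points alone already determine $C'$ and the general $v$ is transverse to it. Two small points you should make explicit: the identification $\Ii_{Z_v}=\mathfrak{m}_q^3+(y^2)$ is best justified from the description of the tangent space to $\tau(X)$ in \cite{cgg} (whose perp in degree $d$ is $L^{d-2}\langle L,M\rangle\cdot R_1$), since the trace/residual types with respect to $\{y=0\}$ alone do not formally pin down the ideal (e.g.\ $(x^2y,\,y^2,\,x^3+y)$ has the same trace and residual); and the smoothness of $C'$ at $q$ must be noted, since your tangency criterion degenerates when the local equation $f$ lies in $\mathfrak{m}_q^2$.
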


\begin{proof}
Fix $D\in |\Oo _X(0,1)|$. We specialize $Z_{7,1}$ to $Z_{6,0}\cup A\cup B$ with $A$ a scheme of type $(3,2)$ with respect to $D$ and $B$ a scheme of type $(2,1)$ with respect to $Q$. By Lemma \ref{a3} it is sufficient to prove that $h^1(\Ii _{Z_{6,0}}(4,3)) =0$ (true by \cite{laf}) and $h^0(\Ii _{Z_{6,0}}(4,2)) =0$ (true because \cite{laf} implies $h^0(\Ii _{Z_{5,0}}(4,2))
=1$).
\end{proof}

\begin{lemma}\label{cd1}
Take $X= \PP^1\times \PP^1$. We have $h^i(\Ii _{Z_{5,2}}(4,4)) =0$, $i=0,1$.
\end{lemma}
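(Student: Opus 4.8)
The plan is to run the differential Horace method with respect to a divisor $D\in |\Oo_X(0,1)|\cong \PP^1$, for which $\Oo_X(4,4)_{|D}=\Oo_{\PP^1}(4)$ (so $h^0(D,\Oo_X(4,4)_{|D})=5$) and the residual twist is $\Oo_X(4,3)$. Since $\deg (Z_{5,2})=5\cdot 3+2\cdot 5=25=h^0(\Oo_X(4,4))$, the two vanishings $h^0=h^1=0$ are equivalent, so it suffices to exhibit a specialization with $h^0(\Ii_Z(4,4))=0$. First I would specialize $Z_{5,2}$ to $Z_{4,1}\cup A\cup B$, where $A$ is one tangential scheme placed in type $(3,2)$ with respect to $D$ (so $A\cap D$ is a degree-$3$ tangential scheme of $D$ and $\Res_D(A)$ a tangent vector of $D$) and $B$ is one $2$-point placed in type $(2,1)$ with respect to $D$ (so $B\cap D$ is a tangent vector of $D$ and $\Res_D(B)$ a point of $D$); the remaining four $2$-points and one tangential scheme stay general and form $Z_{4,1}$, disjoint from $D$. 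Then $Z\cap D$ has degree $3+2=5$ and imposes independent conditions on $\Oo_{\PP^1}(4)$, so $h^0(D,\Ii_{Z\cap D}(4))=0$, and the residual sequence \eqref{eq++ss10} reduces the statement to $h^0(\Ii_{\Res_D(Z)}(4,3))=0$, where $\Res_D(Z)=Z_{4,1}\cup(\text{tangent vector})\cup(\text{point})$ has degree $20=h^0(\Oo_X(4,3))$.

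Applying Lemma~\ref{a3} to this residual, with $V=H^0(\Ii_{Z_{4,1}}(4,3))$ and the degree-$3$ scheme $(\text{tangent vector})\cup(\text{point})\subset D$, the vanishing follows once I check the two auxiliary facts (a) $h^1(\Ii_{Z_{4,1}}(4,3))=0$ and (b) $h^0(\Ii_{Z_{4,1}}(4,2))=0$: indeed (a) gives $\dim V=20-17=3$, (b) gives $\dim V(-D)=h^0(\Ii_{Z_{4,1}}(4,2))=0$ and hence $\dim V_{|D}=3$, so $\dim V(-Z)=3-\min\{3,3\}=0$. I would prove both (a) and (b) by the same device: specialize the tangential scheme of $Z_{4,1}$ to type $(3,2)$ and one of its $2$-points to type $(2,1)$ with respect to $D$, so that once more a degree-$5$ subscheme sits on $D$ (imposing the expected $5$ conditions on $\Oo_{\PP^1}(4)$), while the residual is $Z_{3,0}\cup(\text{tangent vector})\cup(\text{point})$ on $\Oo_X(4,2)$ for (a) and on $\Oo_X(4,1)$ for (b). A further residual with respect to $D$ then routes everything to the three general $2$-points $Z_{3,0}$ on $\Oo_X(4,1)$ and on $\Oo_X(4,0)$, and these lie in the non-defective range: $h^1(\Ii_{Z_{3,0}}(4,1))=0$ holds by \cite{laf} (three $2$-points impose independent conditions on the $10$-dimensional $H^0(\Oo_X(4,1))$), while $h^0(\Ii_{Z_{3,0}}(4,0))=0$ is immediate, a nonzero section of $\Oo_X(4,0)$ being a degree-$4$ form in the first factor that cannot vanish to order $2$ at three general fibers.

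The main obstacle is arranging the specializations so as to avoid the genuinely defective pure system. Three general $2$-points plus one more satisfy $h^0(\Ii_{Z_{5,0}}(4,2))=1$ (as used in the proof of Lemma~\ref{oo2}), so one cannot simply replace the tangential schemes by their underlying $2$-points and quote \cite{laf} for $Z_{5,0}$ on $\Oo_X(4,2)$; the defect would survive and force $h^0(\Ii_{Z_{5,2}}(4,4))\ge 1$. Keeping one tangential scheme intact inside $Z_{4,1}$ rather than degenerating all the way to $Z_{5,0}$, and steering every residual down to $Z_{3,0}$ instead, is exactly what sidesteps this defect. The remaining work is the routine degree bookkeeping that guarantees, at each step, that the part landing on $D\cong\PP^1$ has degree at most $5=h^0(\Oo_{\PP^1}(4))$ and therefore imposes independent conditions there, so that every application of Lemma~\ref{a3} transfers the problem cleanly to the next twist.
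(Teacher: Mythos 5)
Your proof is correct and follows essentially the same route as the paper: the same specialization of $Z_{5,2}$ to $Z_{4,1}\cup A\cup B$ with $A$ of type $(3,2)$ and $B$ of type $(2,1)$ with respect to $D\in |\Oo _X(0,1)|$, followed by a second Horace step taking $Z_{4,1}$ down to $Z_{3,0}$. In fact you are slightly more careful than the paper at the one delicate point: since $\deg (\Res _D(Z_{5,2}))=20=h^0(\Oo _X(4,3))$, Lemma \ref{a3} genuinely requires $h^0(\Ii _{Z_{4,1}}(4,2))=0$ rather than the bound $\le 1$ that the paper extracts from $Z_{5,0}\subset Z_{4,1}$ and \cite{laf}, and your extra descent to $Z_{3,0}$ on $\Oo _X(4,1)$ and $\Oo _X(4,0)$ supplies exactly this vanishing.
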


\begin{proof}
Since $\deg (Z_{5,2}) =25$, we have $h^0(\Ii _{Z_{5,2}}(4,4)) =h^1(\Ii _{Z_{5,2}}(4,4))$. Fix $D\in |\Oo _X(0,1)|$. We
specialize
$Z_{5,2}$ to $Z_{4,1}\cup A\cup B$ with $A$ of type $(3,2)$ with respect to $D$ and $B$ of type $(2,1)$ with respect to $D$.
By Lemma \ref{a3} it is sufficient to prove that $h^1(\Ii _{Z_{4,1}}(4,3)) =0$ and that $h^0(\Ii _{Z_{4,1}}(4,2)) \le \max
\{0,h^0(\Oo _X(4,3))-\deg (\Res _D(Z_{5,2}))\}=1$ (obvious because $h^0(\Ii _{Z_{5,0}}(4,1)) =1$ by \cite{laf}). To prove that
$h^1(\Ii _{Z_{4,1}}(4,3)) =0$ we specialize $Z_{4,1}$ to $Z_{3,0}\cup A\cup B$ with $A, B$ as above and use that $h^1(\Ii
_{Z_{3,0}}(4,2)) =0$ and $h^0(\Ii
_{Z_{3,0}}(4,1)) =1$ (\cite{laf}).
\end{proof}

\begin{proposition}\label{ux3}
Take $X:= \PP^3$. If $t\ge 3$ for any $(a,b)\in \NN^2$ $Z_{a,b}$ gives the expected number of conditions to $H^0(\Oo _X(t))$,
except in the following cases:
\begin{enumerate}
\item $t=3$, $a=0$, $b=3$; we have $|\Ii _{Z_{0,3}}(3)| =3M$, where $M$ is the plane spanned by $(Z_{0,3})_{\red}$ and
$h^1(\Ii _{Z_{0,3}}(3)) =2$;
\item $t=4$, $a=9$, $b=0$; we have  $|\Ii _{Z_{0,3}}(3)| =2Q$, where $Q$ is the smooth quadric containing $(Z_{9,0})_{\red}$
and $h^1(\Ii _{Z_{0,3}}(3)) =2$; 
\item  $t=4$, $a=7$, $b=1$; we have  $|\Ii _{Z_{0,3}}(3)| =2Q$, where $Q$ is the smooth quadric containing the
$7$ points $(Z_{7,0})_{\red}$ and the tangent vector used to define $Z_{0,1}$; hence $h^1(\Ii _{Z_{0,3}}(3)) =1$.
\end{enumerate}
\end{proposition}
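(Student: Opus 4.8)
The plan is to run the inductive differential Horace argument of Proposition \ref{ux2}, one dimension up, reducing through a general plane $D\cong\PP^2\subset\PP^3$ so that the restriction to $D$ is governed by Proposition \ref{ux2} while the residue drops the degree by one. Here a tangential scheme of $\PP^3$ has degree $2n+1=7$ and a $2$-point degree $n+1=4$, so $\deg Z_{a,b}=4a+7b$, and ``expected number of conditions'' means $h^0(\Ii_{Z_{a,b}}(t))\cdot h^1(\Ii_{Z_{a,b}}(t))=0$. By Remark \ref{stu1} it suffices to test, for each fixed $t$, the finitely many critical pairs $(a,b)$ with $\binom{t+3}{3}-3\le 4a+7b\le\binom{t+3}{3}+6$, after the further reductions of that remark.

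I would first dispose of the base cases $t=3$ and $t=4$. The pure cases $b=0$ are the classical Alexander--Hirschowitz theorem \cite{ah2} (yielding item (2), nine double points on quartics, and covering e.g. $(t,a,b)=(4,8,0)$), and the pure cases $a=0$ are the tangential results of \cite{cgg,av} (yielding item (1), three tangential schemes whose triple plane $3M$ is the unique cubic). The remaining mixed critical pairs I would specialize to a plane $D\cong\PP^2$, writing each tangential scheme as a scheme of type $(5,2)$ and each $2$-point as one of type $(3,1)$ with respect to $D$, and then use the residual sequence \eqref{eq++ss10}, Proposition \ref{ux2} on $H^0(\Oo_D(t))$, and Lemmas \ref{a2}--\ref{a3} to strip off the residual tangent vectors and points. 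The two pairs $(a,b)=(7,1)$ and $(a,b)=(5,2)$ at $t=4$ resist this, since to fill $H^0(\Oo_{\PP^2}(4))$ one is forced into the $\PP^2$-exception $(4,5,0)$; here I would instead take $D=Q$ a smooth quadric $\cong\PP^1\times\PP^1$, for which $\Oo_{\PP^3}(4)|_Q=\Oo_Q(4,4)$ and the residual bundle is $\Oo_{\PP^3}(2)$. Specializing all reductions onto $Q$ and the tangent vectors into $Q$ makes $Z\cap Q$ equal to $Z_{7,1}$ (resp. $Z_{5,2}$) on $\PP^1\times\PP^1$, whose $(4,4)$-cohomology vanishes by Lemma \ref{oo2} (resp. Lemma \ref{cd1}); the residual sequence then gives $h^0(\Ii_{Z}(4))\le h^0(\Ii_{\Res_Q(Z)}(2))$, with $\Res_Q(Z)$ a union of points and tangent vectors on $Q$ whose only containing quadric is $Q$, so $h^0(\Ii_{\Res_Q(Z)}(2))=1$. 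For $(5,2)$, where $\deg Z_{5,2}=34<35$, this yields $h^0=1$ and hence $h^1=0$; for $(7,1)$, where $\deg Z_{7,1}=35=h^0(\Oo_{\PP^3}(4))$, it gives $h^0(\Ii_{Z_{7,1}}(4))\le 1$, and since the unique quadric $Q$ through the seven points and the tangent vector satisfies $2Q\supseteq Z_{7,1}$, we obtain exactly item (3).

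For the inductive step $t\ge 5$ I would mirror step (c) of Proposition \ref{ux2}. Choosing integers $x\le b$, $y\le a$ with $5x+3y$ equal to $\binom{t+2}{2}$ up to a controlled remainder, I place $x$ tangential schemes of type $(5,2)$ and $y$ $2$-points of type $(3,1)$ on $D$, inserting virtual schemes of the complementary types $(2,5)$ and $(1,3)$ when parity forces it; Proposition \ref{ux2} then guarantees that these impose the expected conditions on $H^0(\Oo_{\PP^2}(t))$. The residue is a general union $Z_{a-y,b-x}$ in degree $t-1$ together with general tangent vectors and points supported on $D$, and Lemma \ref{a3} reduces the claim to the inductive hypothesis in degree $t-1$ plus an auxiliary $h^0$-bound in degree $t-2$, exactly as in the surface case.

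The main obstacle, as in Proposition \ref{ux2}, is the bookkeeping of exceptional values: the splitting of $(a,b)$ must be chosen so that the intermediate restriction to $D$ never lands on a $\PP^2$-exception $(t',a',b')\in\{(2,2,0),(4,5,0),(3,0,2)\}$ and so that the residues in degrees $t-1\ge 4$ and $t-2\ge 3$ avoid items (1)--(3); whenever a collision is unavoidable one re-specializes as in steps (c)--(d) of Proposition \ref{ux2}, using that each exceptional defect equals $1$ or $2$ and that one may arrange $\deg(W\cap D)<t$. The only feature with no analogue in the plane is the quadric geometry behind item (3), which is precisely why Lemmas \ref{oo2} and \ref{cd1} are established on $\PP^1\times\PP^1$ rather than on a plane.
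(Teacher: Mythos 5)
Your proposal matches the paper's proof in all essentials: known results for the pure cases $a=0$ or $b=0$, reduction via a plane $H\cong\PP^2$ with schemes of type $(5,2)$ and $(3,1)$ governed by Proposition \ref{ux2} for the generic mixed cases, the smooth quadric $Q\cong\PP^1\times\PP^1$ together with Lemmas \ref{oo2} and \ref{cd1} for the resistant pairs $(7,1)$ and $(5,2)$ at $t=4$, and induction on $t$ for $t\ge 5$ where no exceptional collision occurs. The degree counts ($\deg Z_{7,1}=35$, $\deg Z_{5,2}=34$ against $h^0(\Oo_{\PP^3}(4))=35$) and the identification $|\Ii_{Z_{7,1}}(4)|=\{2Q\}$ are exactly as in the paper.
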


\begin{proof}
The exceptional cases with either $a=0$ or $b=0$ are known by the Alexander-Hirschowitz theorem and \cite{bcgi, cgg}. Outside
these exceptional cases we only need to test $(a,b)$ with $a>0$, $b>0$ and $|4a+7b-\binom{t+3}{3}| \le 3$. We first check that
if $t=4$ we have $h^0(\Ii _{Z_{7,1}}(4)) =1$ with $|\Ii _{Z_{7,1}}(4)| = \{2Q\}$, where $Q$ is the only quadric containing the $7$
points appearing with multiplicity $1$ in $Z_{7,1}$ and the tangent vector $v$ defining the tangential scheme defining the
tangential scheme of $Z_{7,1}$. We have $2Q\in |\Ii _{Z_{7,1}}(4)|$ and $Q$ is smooth, because the union of these $7$ points
and the tangent vector is a general of $7$ points and a general tangent vector. Note that $Z_{7,1}\cap Q$ is a general union of
a tangential scheme of
$Q$ and
$7$ 2-points of
$Q$. By Lemma \ref{oo2} we have $h^0(Q,\Ii _{Z_{7,1}\cap Q}(4)) =0$. The scheme $\Res _Q(Z_{7,1})$ is a general union of $7$
points and one tangent vector
and hence it is contained in a unique quadric. Since it is also contained in $Q$, we get $|\Ii _{Z_{7,1}}(4)| = 2Q$ and hence
$h^1(\Ii _{Z_{7,1}}(4))=1$.

\quad (a) Assume $t=3$ and $(a,b)\ne (0,3)$. Since $h^0(\Ii _{Z_{0,4}}(3)) =h^0(\Ii _{Z_{1,3}}(3)) =0$
(the only element of $|\Ii _{Z_{0,3}}(3)|$ is a triple plane), we may assume $1\le b \le
2$. 

First assume $b=2$. To get $h^1(\Ii _{Z_{1,2}}(3)) =0$ we specialize $Z_{1,3}$ to $E:= Z_{0,1}\cup W\cup W'$ with $W$ of
type $(5,2)$ with respect to $H$ and $W'$ of type $(3,1)$ with respect to $H$. By Proposition \ref{ux2} we have $h^1(H,\Ii
_{H\cap (W\cup W')}(3)) =0$. By Lemma \ref{a3} it is sufficient to prove that $h^1(\Ii _{0,1}(2))=0$ (true by \cite{cgg}) and
that $h^0(\Ii _{Z_{0,1}}(1)) =0$ (obvious). To prove that $h^0(\Ii _{Z_{2,2}}(3)) =0$ we specialize $Z_{2,2}$ to $Z_{0,1}\cup
W\cup W'\cup W''$ with $W''$ of type $(3,1)$ with respect to $H$. We first use that $h^0(H,\Ii _{H\cap (W\cup W'\cup
W'')}(3))=0$
(Proposition \ref{ux2}) and then use Lemma \ref{a3} as in the case $(a,b)=(1,2)$ just done.

Now assume $b=1$. To prove that $h^1(\Ii _{Z_{3,1}}(3)) =0$ (i.e. $h^0(\Ii _{Z_{3,1}}(3)) =1$ and hence $h^0(\Ii
_{Z_{4,1}}(3)) =0$) we specialize $Z_{3,1}$ to $Z_{0,1}\cup M_1$ with $M_1$ a general union of $3$ schemes of type $(3,1)$
with respect to $H$ and then use the Alexander-Hirschowitz theorem in $H$ and Lemma \ref{a3}.

\quad (b) Assume $t=4$ and $(a,b)\notin \{(9,0),(7,1)\}$. 

First assume $b\ge 3$. We specialize $Z_{a,b}$ to $Z_{a,b-3}\cup W$
with $W$ a general union of $3$ schemes of type $(5,2)$ with respect to $H$. We first apply Proposition \ref{ux2} and then
Lemma \ref{a3}.

Now assume $b=2$. To prove that $h^1(\Ii _{Z_{5,2}}(4)) =0$ it is sufficient to prove
that $|\Ii _{Z_{5,2}}(4)| =2Q$, where $Q$ is the only quadric surface (it is a smooth quadric surface) containing the $5$
points of $(Z_{5,0})_{\red}$ and the two tangent vectors used to define the two tangential schemes of $Z_{5,2}$. To prove that
$|\Ii _{Z_{5,2}}(4)| =2Q$ it is sufficient to prove that $h^0(Q,\Ii _{Z_{5,2}\cap Q}(4)) =0$ (true by Lemma \ref{cd1}, because
$Q\cong
\PP^1\times \PP^1$ and $Z_{5,2}\cap Q$ is a general union of $2$ tangential schemes of $Q$ and $5$ 2-points of $Q$).
Since $h^0(\Ii _{Z_{5,2}}(4))=1$, we have $h^0(\Ii _{Z_{a,2}}(4)) =0$ for all $a\ge 6$.

Now assume $b=1$. Since $h^0(\Ii _{Z_{7,1}}(4)) = h^1(\Ii _{Z_{7,1}}(4))=1$, obviously  $h^0(\Ii _{Z_{a,1}}(4)) =0$ for
all $a\ge 8$, but it is also easy to check that $h^1(\Ii _{Z_{6,1}}(4)) =0$ in the following way. Let $x$ be the
maximal integer
$\le 6$ such that $h^1(\Ii _{Z_{x,1}(}4)) =0$. Note that $h^1(\Ii _{Z_{7,1}}(4))\ge 7-x$.

\quad ({c}) Assume $t\ge 5$. We work by induction on $t$ (as in the proof of Proposition \ref{ux2}), because for $t=5$ taking
$\Res _H$ we never land in an exceptional case.
\end{proof}

\begin{proof}[Proof of Theorem \ref{uu1}:]
Propositions \ref{ux2} and \ref{ux3} cover the cases $n=2,3$.

\quad (a) Assume $n=4$. We do here the cases $t=4,5,6$, because the inductive proof done in steps ({c}) and (d) works verbatim. We have
$h^0(\Oo _{\PP^3}(3)) = 20$, $h^0(\Oo _{\PP^3}(4)) =35$, $h^0(\Oo _{\PP^3}(5)) =56$, $h^0(\Oo _{\PP^3}(6))=84$, $h^0(\Oo
_{\PP^4}(2)) =15$,
$h^0(\Oo _{\PP^4}(3)) =35$,
$h^0(\Oo _{\PP^4}(4)) =70$,
$h^0(\Oo _{\PP^4}(5)) =126$ and $h^0(\Oo _{\PP^4}(6)) =210$. By assumption either $a\ge 7$ or $b\ge 4$.

\quad (a1) Assume $t=4$, $b\ge 4$ and $a\ge 3$. We specialize $Z_{a,b}$ to $Z_{a-1,b-4}\cup W\cup M\cup N$ with $W$ a
general union of
$3$ schemes of type $(7,2)$ with respect to $H$, $M$ is a scheme of type $(2,7)$ with respect to $H$ and $N$ is a scheme of
type
$(4,1)$ with respect to $H$. In $H$ we apply Proposition \ref{ux3} with $(t,a,b) =(4,3,3)$ and Lemma \ref{a2}. Then we use
Lemma \ref{a3}.

\quad (a2) Assume $t=4$ and $b=2$. To prove that $h^1(\Ii _{Z_{8,2}}(4)) =0$ (resp. $h^0(\Ii
_{Z_{9,2}}(4))=0$) it is sufficient to specialize
$Z_{8,2}$ to $Z_{4,0}\cup W_1\cup W_2$ (resp. $Z_{4,0}\cup W_1\cup W_2\cup W_3$) with $W_1$ a general union of $3$
schemes of type $(7,2)$ with respect to $H$, $W_2$ a general union of $3$ schemes of type $(4,1)$, $W_3$ a scheme of type $(4,1)$ with respect to $H$ and then apply
first Proposition \ref{ux3} and then Lemma \ref{a3}. Note that $\binom{8}{4} -\deg (Z_{8,2}) =3 \ge 2 = \binom{7}{4} -\deg
(H\cap (W_1\cup W_2))$ and that
$\deg (H\cap (W_1\cup W_2\cup W_3)) =2 =\deg (Z_{9,3})-\binom{8}{4}$; if one of these two numerical checks were wrong the first degeneration would send
$Z_{x,2}$, $x\in \{8,9\}$, in a configuration with both $h^0\ne 0$ and $h^1\ne 0$.

\quad (a3) Assume $t=4$ and $b=1$. To prove that $h^1(\Ii _{Z_{12,2}}(4)) =0$ (i.e. $h^0(\Ii _{Z_{12,2}}(4))=1$ and so $h^0(\Ii
_{Z_{13,2}}(4))=0$) we degenerate $Z_{12,1}$ to $Z_{4,0}\cup M_2\cup M_3$ with $M_3$ a a scheme of type $(2,7)$ with respect
to $H$ and
$M_3$ a general union of $8$ schemes of type $(4,1)$ with respect to $H$. We apply Proposition \ref{ux3} to the intersection
with $H$ and then we apply Lemma \ref{a3}. Note that $\binom{8}{4}-\deg (Z_{12,1}) =1 = \binom{7}{4} -\deg (H\cap (M_2\cup
M_3))$.
To prove $h^1(\Ii _{Z_{4,0}\cup \Res _H(M_2)}(3)) =0$ it is sufficient to note that that (since $Z_{4,0}$ is not related to $H$) $Z_{4,0}\cup \Res _H(M_2)$ is projectively equivalent to a subscheme
of some $Z_{4,1}$.

\quad \emph{Claim 1:} On $\PP^4$ we have $h^1(\Ii _{Z_{4,1}}(3))=0$.

\quad \emph{Proof of Claim 1:} We degenerate $Z_{4,1}$ to $Z_{1,0}\cup M_4\cup M_5$ with $M_4$ a scheme of type $(7,2)$
with respect to $H$ and $M_5$ a general union of $3$ schemes of type $(4,1)$ with respect to $H$. We use Proposition \ref{ux3}
and Lemma \ref{a3}.

\quad (a4) Assume $t=5$. It is sufficient to test all $a, b$ such that $122 \le 5a+9b\le 130$. After each degeneration we use
Proposition \ref{ux3} for $t=5$, Lemma \ref{a3} and the cases $t=3,4$ just done. If
$a
\ge 14$ (and in particular if $b\le 3$) we degenerate
$Z_{a,b}$ to $Z_{a-14,b}\cup M_6$ with $M_6$ a general union of $14$ schemes of type $(4,1)$ with respect to $H$. If $b\ge 8$
we degenerate $Z_{a,b}$ to $Z_{a,b-8}\cup M_7$ with $M_7$ a general union of $8$ schemes of type $(7,2)$ with respect to $H$.
If $4\le b\le 7$ (and hence $a\ge 7$) we degenerate $Z_{a,b}$ to $Z_{a-4,b-7}\cup M_8\cup M_9$ with $M_8$ a general union of $4$
schemes of type $(7,2)$ with respect to $H$ and $M_9$ a general union of $7$ schemes of type $(4,1)$ with respect to $H$.

\quad (a5) Assume $t=6$. It is sufficient to  test all $a, b$ such that $5a+9b \ge 206$. After each degeneration we use
Proposition \ref{ux3} for $t=6$ and Lemma \ref{a3}. If $a\ge 21$  (and in particular if $b\le 3$) we degenerate $Z_{a,b}$ to
$Z_{a-21,b}\cup N_1$ with
$N_1$ a general union of $21$ schemes of type $(4,1)$ with respect to $H$. If $b\ge 12$ we degenerate $Z_{a,b}$ to
$Z_{a,b-12}\cup N_2$ with $N_2$ a general union of $12$ schemes of type $(7,2)$ with respect to $H$. If $4 \le b \le 7$
 we degenerate $Z_{a,b}$ to
$Z_{a-7,b-4}\cup N_3\cup N_4$ with
$N_3$ a general union of $4$ schemes of type $(7,2)$ with respect to $H$ and $N_4$ a general union of $7$ schemes of type
$(4,1)$ with respect to
$H$.

\quad (b) Assume  $n=5$. The case $t=3$ is true by our assumptions on $a, b$, the Alexander-Hirschowitz theorem and
\cite{cgg, bcgi}. We do here the cases
$t=4,5$ (which require the case
$t=4$ in
$H\cong
\PP^4$) because the inductive proof done in steps ({c}) and (d) works verbatim. We have $h^0(\Oo _{\PP^5}(2)) = 21$, $h^0(\Oo _{\PP^5}(3))
= 56$,
$h^0(\Oo _{\PP^5}(4)) = 140$, $h^0(\Oo _{\PP^5}(5)) = 266$. By assumption either $a\ge 10$ or $b\ge 6$.

\quad (b1) Assume $t=4$. By \cite{bcgi} we may assume $a>0$. We may assume $6a+11b \ge 135$. If $a\ge 14$ (and in particular if
$b\le 4$) we degenerate
$Z_{a,b}$ to
$Z_{a-14,b}\cup N_5$ with
$N_5$ a general union of $14$ schemes of type $(5,1)$ with respect to $H$. If $b\ge 5$ and $a\ge 5$ we degenerate $Z_{a,b}$ to
$Z_{a-5,b-5}\cup N_6\cup N_7$ with $N_7$ a general union of $7$ schemes of type $(9,2)$ with respect to $H$ and $N_8$ a
general union of $5$ schemes of type $(5,1)$ with respect to $H$. If $1\le a \le 4$ (and hence $b\ge 8$) we degenerate
$Z_{a,b}$ to
$Z_{a-1,b-8}\cup N_9\cup O_1\cup O_2$ with $N_9$ a general union of $7$ schemes of type $(9,2)$ with respect to $H$, $O_1$ a
scheme of type
$(2,9)$ with respect to $H$ and $O_2$ a scheme of type $(5,1)$ with respect to $H$.

\quad (b2) Assume $t=5$.  We may assume $6a+11b \ge 261$. After each degeneration we use the case $n=4$ and $t=5$ done in
step (a2) and Lemma \ref{a3}.  If $b\ge 14$ we degenerate $Z_{a,b}$
to $Z_{a,b-14}\cup O_3$ with $O_3$ a general union of $14$ schemes of type $(9,2)$ with respect to $H$. If $4\le b\le 13$
(and hence $a\ge 18$)  we degenerate $Z_{a,b}$
to $Z_{a,b-14}\cup O_4\cup O_5$ with $O_5$ a general union of $4$ schemes of type $(9,2)$ with respect to $H$ and $O_5$ a
general union of $13$ schemes of type $(5,1)$ with respect to $H$. If $b\le 3$ (and hence $a\ge 26$) we degenerate $Z_{a,b}$
to to $Z_{a-26,b}\cup O_6\cup O_7$ with $O_6$ a general union of $25$ schemes of type $(5,1)$ with respect to $H$ and $O_7$
a scheme of type $(1,5)$ with respect to $H$.

\quad (c) By steps (a) and (b) we may assume $n\ge 6$ and that the theorem is true for
$\PP^{n-1}$. By Remark ref{stu1} we may reduce the proof of the theorem (preserving the assumption $\max \{(n+1)a, (2n+1)b\}\ge
\binom{n+3}{3}$) to the check of all pairs $(a,b)\in
\NN^2$ such that
$$\binom{n+t}{n} -n \le (n+1)a+(2n+1)b\le \binom{n+t}{n} +n$$
Fix a hyperplane
$H\subset
\PP^n$. The case $t=3$
is true by the Alexander-Hirschowitz theorem (resp.
\cite{av}) if
$a\ge
\lceil
\binom{n+3}{3}/(n+1)\rceil$ (resp. $b\ge
\lceil \binom{n+3}{3}/(2n+1)\rceil$). Thus we may use $t\ge 4$ and that the theorem is true for $H^0(\Oo _{\PP^n}(x))$ for all
integers $x$  such that $3\le x<t$. It is sufficient to handle all $(a,b)$ with $(n+1)a+(2n+1)b \ge \binom{n+t}{n}-n$.

\quad (d) Assume $a\ge  \lceil \binom{n+3}{3}/(n+1)\rceil$. There are unique integers $x, y$ such that $(2n-1)x+2y
=\binom{n+t-1}{n-1}$ and $0\le y \le 2n-2$. Since $\binom{n+t-1}{n-1} \ge 4n-4$, we have $x\ge 0$.

\quad (d1) Assume $b\ge x+y$. We specialize $Z_{a,b}$ to $Z_{a,b-x-y}\cup W\cup W'$, where $W$ is a general union of $x$
schemes of type $(2n-1,2)$ with respect to $H$ and $W'$ is a general union of $y$ schemes of type $(2,2n-1)$ with respect to $H$. By \cite{av} and Lemma
\ref{a2} we have
$h^i(H,\Ii _{W\cap H}(t)) =0$. Thus it is sufficient to prove that $Z_{a,b-x-y}\cup \Res _H(W\cup W')$ gives the expected number
of conditions to $H^0(\Oo _{\PP^n}(t-1))$. By Lemma \ref{a3} it is sufficient to prove that $Z_{a,b-x-y}\cup \Res _H(W')$
gives the expected number of conditions to $H^0(\Oo _{\PP^n}(t-1))$ and that $$h^0(\Ii _{Z_{a,b-x-y}}(t-2)) \le \max \{0,
\binom{n+t-1}{n} - y(2n-1) - (n+1)a-(2n+1)(b-x-y)\}.$$ The latter condition is satisfied by the inductive assumption if $t\ge 5$,
while if $t=4$ we use that $a>n$ and hence $h^0(\Ii _{Z_{a,b-x-y}}(t-2))=0$.

\quad \emph{Claim 1:} $Z_{a,b-x-y}\cup \Res _H(W')$ gives the expected number
of conditions to $H^0(\Oo _{\PP^n}(t-1))$.

\quad \emph{Proof of Claim 1:} Set $B:= \Res _H(W')$. Since $y\le 2n-2$, we have $2x\ge 4n-4$. Since $(n+1)a +(2n+1)b \le
\binom{n+t}{n}+2n-2$ and
$\deg (Z_{a,b-x-y}\cup
\Res _H(W')) = (n+1)a +(2n+1)b -\binom{n+t-1}{n-1} -2x$, we need to prove that $h^1(\Ii_{Z_{a,b-x-y}\cup B}(t-1)) =0$.
Write $\binom{n+t-2}{n-1} + (2n-1)y + (2n-1)w+ z$ with $w, z$ integers  and $0\le z\le 2n-2$. Since $y\le 2n-2$, we have
$w\ge 0$. 

\quad (d1.1) Assume $b-x-y\ge w$. We specialize $Z_{a,b-x-y}$ to $Z_{a,b-x-y-w}\cup W''$ with $W''$ general
union of $w$ schemes of type $(2n-1,2)$ with respect to $H$. Since $\deg (W''\cap H) \le \binom{n+t-2}{n-1}$, the inductive
assumption gives $h^1(H,\Ii _{H\cap (Z_{a,b-x-y-w}\cup W')}(t-1)) =0$ and hence it is sufficient to prove $h^1(\Ii
_{Z_{a,b-x-y-w}}(t-2))=0$.
This is not numerically impossible, because $\binom{n+t-2}{n-1}-\deg (W''\cap H) =u\le 2n-2$ and $2x\ge 2n-2 + \deg (Z_{a,b})
-\binom{n+t}{n} +2n-2$. If $t\ge 5$ we may use the inductive assumption. If $t=4$ we have $h^0(\Ii _{Z_{a,0}}(2)) =0$ and to get
$h^0(\Ii _{Z_{a,b}}(4))=0$
we insert two other schemes of type $(n,1)$ to handle $B$.  

\quad (d1.2) Now assume $b<x+y+w$. Write $\binom{n+t-2}{t-1} = (2n-1)(b-x)+ nu_1+u_2$ with $u_1, u_2$ integers and $0\le u_2
\le n-1$. 

Assume for the moment $a\ge u_1$. We specialize $Z_{a,b-x-y}\cup B$ to a general
$E:= Z_{a-u_1,0}\cup B\cup M_0 \cup M'$ with $M_0$ a general union of $u_1$ schemes of type $(n,1)$ with respect to $H$ and
$M'$ a general union of $u_2$ schemes of type $(1,n)$ with respect to $H$. We want to prove that $h^1(H,\Ii _{H\cap E}(t)) =0$ and $h^1(\Ii _{\Res _H(E)}(t-1)) =0$ (these vanishings
would prove Claim 1 in this case). To apply the inductive assumption to $E\cap H$ we need that either $(2n+1)(b-x)\ge
\binom{n+2}{3}$ or $(n+1)u_1 \ge \binom{n+2}{3}$. Assume that both inequalities fails and so
$(n+1)u_1 + (2n-1)(b-x) \le 2\binom{n+3}{3}-2$. Since $u_2\le n-1$, we have
$(2n-1)(b-x) +nu _1\ge \binom{n+t-1}{n-1}-n+1$. The right hand side of the last inequality is an increasing function of $t$
and so, taking $t=5$, we get
$2\binom{n+3}{3} -2 \ge \binom{n+4}{n-1}-n+1$, which is false for $n\ge 5$.
The scheme
$\Res _H(E)$ is a general union of $Z_{a-u_1,0}$, $u_1$ points of $H$ and $b-x$ tangent vectors of $H$. We use the
Alexander-Hirschowitz theorem,  Lemma \ref{a3} and the obvious inequality $u_1+2(b-x) \le \binom{n+t-1}{n}
-\binom{n+t-2}{n}$, except that if $t=4$ we also need $h^0(\Ii _{Z_{a-u_1,0}}(2)) \le \max \{0,\binom{n+3}{3} -(n+1)(a-u_1)
-2(b-x)\}$. If $a-u_1\ge n+1$, then $h^0(\Ii _{Z_{a-u_1,0}}(2))=0$. If $a-u_1 \le n$, then $h^0(\Ii
_{Z_{a-u_1,0}}(2))=\binom{n+2-a+u_1}{2}$.

Now assume $u_1>a$. We specialize $B\cup Z_{a,x-y-z}$ to $E':= B\cup M'\cup M''$ with $M'$ $b-x-y$ schemes of type $(2n-1,2)$
with respect to $H$ and $M''$ a general union of $a$ schemes of type $(n,1)$. We conclude as above (to handle $H\cap E'$ we
use that $E'\cap H\subset E\cap H$ and we proved that $h^1(H,\Ii _{E\cap H}(t)) =0$), concluding the proof of Claim 1.

\quad (d2) Assume $x \le b<x+y$. Write $2(y-b+x) = nw_1+w_2$ with $w_1\in \{0,1\}$ and $0\le w_2\le n-1$. We specialize
$Z_{a,b}$ to
$J:= Z_{a-x-y+b,0}\cup M_1\cup M_2\cup M_3\cup M_4$ with
$M_1$ a general union of
$x$ schemes of type $(2n-1,2)$, $M_2$ a general union of $x+y-b$ schemes of type $(2,2n-1)$, $M_3$ a general union of
$w_1$ schemes of type $(n,1)$ and $M_4$ a general union of $w_2$ schemes of type $(1,n)$. Note that $\deg (J\cap
H)=\binom{n+t-1}{n-1}$. By Lemma \ref{a3} to prove that $Z_{a,b}$ gives independent conditions to $H^0(\Oo _{\PP^n}(t))$
it is sufficient to prove that $h^1(H,\Ii _{H\cap (M_1\cup M_3)}(t)) =0$ (true by the inductive assumption, because $e_1\le 1$
and so $|a-x-y+b|\ge \binom{n+2}{3}$ since $t\ge 5$)
and that
$h^1(\Ii _{Z_{a-x-y+b,0}}(t-1)) =0$. The last vanishing is not numerically impossible, because $\deg (Z_{a-x-y+b,0})=\deg
(Z_{a,b}) -\binom{n+t-1}{n-1} -e_2-2x$ with $x\ge n$ and $\deg (Z_{a,b}) \le \binom{n+t}{n}+2n-2$. Thus $h^1(\Ii
_{Z_{a-x-y+b,0}}(t-1)) =0$ by the Alexander-Hirschowitz theorem.

\quad (d3) Assume $b<x$. Set $u:= \lfloor (\binom{n+t-1}{n-1}-(2n-1)b)/2\rfloor$ and $v:=
\binom{n+t-1}{n-1}-(2n-1)b -2u$. We specialize $Z_{a,b}$ to $Z_{a-u-v,0}\cup M_5\cup M_6$ with $M_5$ general union of $u$
schemes of type $(n,1)$ with respect to $H$ and $M_6$ general union of $v$ schemes of type $(1,n)$ with respect to $H$.
We have $h^i(H,\Ii _{H\cap M_5\cup M_6}(t)) =0$, $i=0,1$. For the details of the conclusion, see the proof of Claim 1 in step
(d1).
\end{proof}

\section{Proof of Theorem \ref{i1}}

\begin{proof}[Proof of Theorem \ref{i1}:]
Let $x$ be a positive integer such that $L^{\otimes x}$ is very ample. Applying the theorem (assumed to be true) to the line
bundle
$L':= L^{\otimes x}$ and finitely many times to the same zero-dimensional scheme $W$ and to finitely many line bundles
$M\otimes L^{\otimes i}$,
$0\le i\le x-1$, we reduce to the case in which $L$ is very ample. This reduction step is exactly the reduction step in
\cite[\S 7]{ah2} and we believe that this should be better known. Set
$n:=
\dim X$. As in \cite{ah2} we use induction on $n$. We use the same strategy: we try to land after a controlled number of steps in a simpler situation, which may be considered as a winning situation, e.g. because there are no tangential schemes.

\quad (a) Assume $n=1$. In this case $Z_{a,b}$ is a general union of connected schemes of multiplicity $2$ and $3$ whose
support is general in $X_{\mathrm{reg}}$. In characteristic $0$ it is sufficient to quote \cite{cm}. 

\quad (b) Assume $n>1$ and that  Theorem \ref{i1} is true for all projective varieties of dimension $<n$. Since $L$ is very ample, there is $D\in |L|$ such that $D\cap W=\emptyset$. Set $M_1:= M_{|D}$ and $R:= L_{|D}$. Fix an integer
$k_1>0$ such that $h^1(\Ii _W\otimes M\otimes L^{\otimes t})=0$ for all $t\ge k_1-1$ and (inductive assumption) for all
$(c,d)\in \NN^2$ and all integers $t\ge k_1$ either $h^1(D,\Ii _{A_{c,d}}\otimes M_1\otimes R^{\otimes t}) =0$ or $h^0(D,\Ii
_{A_{c,d}}\otimes M_1\otimes R^{\otimes t}) =0$, where $A_{c,d}\subset D$ is a general union of $c$ 2-points of $D$ and $d$
tangential schemes of $D$. Set $\beta := h^0(\Ii _W\otimes M\otimes L^{\otimes k_1})$. For all integers $t\ge k_1$ set $A(t):=
h^0(D,M_1\otimes L^{\otimes t})$ and
$B(t):= h^0(\Ii _W\otimes M\otimes L^{\otimes t})$. We have $B(t) -B(t-1)=A(t)$ for all $t\ge k_1$, because $h^1(\Ii _W\otimes
M\otimes L^{\otimes t})=0$ for all
$t\ge k_1-1$. Since the function $A(t)$ is strictly increasing, there is an integer $k_2>k_1$ such that $A(k_2) \ge
(2n+1)\beta +2n$. By \cite{ah2} (case with multiplicity $2$) taking a high tensor power of $L$ instead of $L$ we may also
assume that the thesis holds with
$k_2=1$ for all
$(a,b)$ with $b=0$. Since $\chi (R^{\otimes t}) =A(t)$ for all $t\ge k_1$, Riemann-Roch gives that $A(t)$ increases like
$ct^{n-1}/(n-1)!$ and $B(t)$ increases like $ct^n/n!$, where $c = L\cdots L$ is the self-intersection number of $n$ copies of
$L$. Thus we may find integers $k_0>k_3\ge k'>0$ such that for all $t\ge k_3$ we have
\begin{enumerate}
\item $A(t) \le (2n-1)A(t-2)$;
\item $A(t) \ge n(2n+1)+4n$;
\item $B(k_0) \ge 5nB(k_3)$.
\end{enumerate}

Fix an integer $t\ge k_0$ and $(a,b)\in \NN^2$. If $\Ll$ is any line bundle on $X$ and $Z, Z'$ are
zero-dimensional subchemes of $X$ such that $Z\subseteq Z'$ and $h^1(\Ii _{Z'}\otimes \Ll)=0$ (resp. $h^0(\Ii _Z\otimes
\Ll)=0$), then
$h^1(\Ii _Z\otimes \Ll)=0$ (resp. $h^0(\Ii _{Z'}\otimes \Ll)=0$). Thus increasing or decreasing both elements of $(a,b)$ we
see that it is sufficient to prove the statement for all $(a,b)$ such that $B(t)-n \le (n+1)a +(2n+1)b \le B(t)+n$ (Remark
\ref{stu1}). We only use the last inequality, but a weaker inequality like  $(n+1)a +(2n+1)b \le B(t)+10^{10}n$ would work. 

\quad (b1) We define integers $b_x\ge 0$ and $e_x\ge 0$ for all $x\ge 1$ until we will arrive at some integer $y>0$
such that $b< b_1+e_1+\cdots + b_y+e_y$ or at step $t-k_3$, i.e. the steps in which we would define $b_{t-k_3}$ and $e_{t-k_3}$. At that point we go to step (b2). Since $2n-1$ is odd,
there are integers
$b_1$ and
$e_1$ such that $A(t) =(2n-1)b_1 + 2e_1$ and $0 \le e_1 \le 2n-2$. Since $A(t)\ge n(2n-1)+2(2n-2)$, we have $b_1\ge n$. We
specialize
$Z_{a,b}$ to the union $W_1$ of $Z_{a,b-b_1-e_1}$, the union $E_1$ of $b_1$ schemes of type $(2n-1,2)$ with respect to $D$ and
the union $F_1$ of $e_1$ schemes of type $(2,2n-1)$ with respect to $D$. Since $W\cap D=\emptyset$, $\deg (W_1\cap D)=A(t)$ and
$W_1\cap D$ is a general union of
$b_1$ tangential schemes of $D$ and $e_1$ tangent vectors of $D$, Lemma \ref{a2} gives $h^i(D,\Ii _{W_1\cap D}\otimes
M_1\otimes R^{\otimes t})=0$. Thus it is sufficient to prove that either $h^0(\Ii _{W\cup \Res _D(W_1)}\otimes M\otimes
L^{\otimes (t-1)}) =0$ or $h^1(\Ii _{W\cup \Res _D(W_1)}\otimes M\otimes
L^{\otimes (t-1)}) =0$. Set $W_2:= Z_{a,b-b_1-e_1}\cup \Res _D(F_1)$. Since $\Res _D(E_1)$ is a general union of $b_1$ tangent
vectors of $D$, by Lemma \ref{a3} it is sufficient to prove that $h^1(\Ii _{W\cup W_2}\otimes L^{\otimes (t-1)})=0$
and that $h^0(\Ii _{W\cup W_2}\otimes L^{\otimes (t-1)})\le \max \{0,B(t-1) - \deg (\Res _D(W_1))\}$. We call $\S (t-2)$ the
latter condition and we will check it in step (b2). Now we continue the proof that $h^1(\Ii _{W\cup W_2}\otimes L^{\otimes
(t-1)})=0$. Let $b_2$ and $e_2$ be the only integers such that $A(t-1) = (2n-1)e_1 + (2n-1)b_2 +2e_2$ with $0\le e_2\le 2n-2$.
Since $t-1 \ge k_3$ and $e_1\le 2n-2$, we have $b_2\ge 0$. We specialize $W_2$ to a general union of $Z_{a,b-b_1-b_2-e_1-e_2}$,
a general union of $b_2$ schemes of type $(2n-1,2)$ with respect to $D$ and $e_2$ schemes of type $(2,2n-1)$ with respect to
$D$.
Note that $\deg (D\cap (E_2\cup F_2)) =A(t-1)$. Note that $\Res _D(\Res _D(F_1)) =\emptyset$. By Lemma \ref{a2} it is
sufficient to prove that
$h^1(\Ii _{W\cup Z_{a,b-b_1-b_2-a_1-a_2}\cup \Res _D(E_2)\cup \Res _D(F_2)}\otimes M\otimes L^{\otimes (t-2)})=0$. By Lemma
\ref{a3} it is sufficient to prove that  $h^1(\Ii _{W\cup Z_{a,b-b_1-b_2-a_1-a_2}\cup \Res _D(F_2)}\otimes
M\otimes L^{\otimes (t-2)})=0$ and that $h^0(\Ii _{W\cup Z_{a,b-b_1-b_2-a_1-a_2}\cup \Res _D(F_2)}\otimes
M\otimes L^{\otimes (t-3)})\le \max \{0,B(t-3)-\deg (W\cup Z_{a,b-b_1-b_2-a_1-a_2})\}$; we call $\S (t-3)$ the last condition
and we will check it in step (b2). Here we continue the proof of the $h^1$-vanishing. Let $b_3$ and $e_3$ be the only integer
such that $(2n-1)e_2+(2n-1)b_3+2e_3 =A(t-2)$. Since $t-2 \ge k_0$, we have $b_3\ge 0$. We specialize $W_3$
to a general union of of $Z_{a,b-b_1-b_2-b_3-e_1-e_2-e_3}$, the union $E_3$ of $b_3$ schemes of type $(2n-1,2)$ with respect
to
$D$ and the union $F_3$ of $e_3$ schemes of type $(2,2n-1)$ with respect to $D$. Then we continue. We need to check that
this construction stops before using $L^{\otimes k_3-1}$. 

\quad \emph{Claim 1:} There is an integer $y\le t-k_3-1$ such that $b< b_1+e_1+\cdots +b_y+e_y$.

\quad \emph{Proof of Claim 1:} We have $(n+1)a + (2n+1)b \le B(t)+2n-2$. Since $(2n-1)(b_i+e_i) \ge A(t-i)$, for all $i \le t-k_1$ we have  $b_1+e_1+\cdots +b_y+e_y \ge (\sum _{i=1}^{y} A(t+1-1))/(2n-1) =(B(t)-B(t-y))/(2n-1)$. Since $(n+1)a +(2n+1)b \le
B(t)+n$, $t\ge k_0$ and $B(k_0)\ge 5nB(k_3)$, we get Claim 1.

\quad (b1.1) By Claim 1 there is a maximal integer $z$ such that $b\ge b_1+e_1+\cdots +b_z+e_z$ and (modulo the check of $\S
(x)$ for
$t-z-2 \le x \le t-2$) to conclude for $Z_{a,b}$ it would be sufficient to prove that $h^1(\Ii _{W\cup Z_{a,b-b_1-e_1-\cdots -b_z-e_z}\cup B}\otimes M\otimes L^{\otimes (t-z)})=0$, 
where $B$ is a general union of $e_z$ tangential schemes of $D$. Set $w:= b-b_1-e_1-\cdots -b_z-e_z$. 

\quad (b1.1.1.1) First assume $(2n+1)w\le
A(t-1-z)-\deg (B)$. Write $A(t-1-z) =(2n+1)w +\deg (B) +na_1+c_1$ with $(a_1,c_1)\in \NN^2$ and $0\le c_1\le n-1$. Assume for
the moment $a\ge a_1+c_1$. We specialize
$Z_{a,w}\cup B$ to $E:= B\cup Z_{a-a_1-a_2,0}\cup G\cup G'$, where $G$ is a general union of $a_1$ schemes of type $(n,1)$ with respect to $D$ and $G'$ is a general union of  of $c_1$ schemes of type $(1,n)$ with respect to $D$. By the inductive assumption we have $h^i(D,\Ii _{E\cap D}\otimes M_1\otimes
R^{\otimes t-z}) =0$, $i=0,1$, and so (modulo the checks of $\S x$ for $t-z-1\le x\le t$) we conclude if $h^1(\Ii _{\Res
_D(E)}(t-z-1)) =0$. This is true, because we only have $2$-points, we use \cite{ah2} and from $L^{\otimes (t-1)}$ down we always
had schemes $E_i$ for $L^{\otimes i}$ with $\deg (E_i) \le B(i)$.

Now assume $a<a_1+c_1$. We specialize $Z_{a,w}\cup B$ to $E':= B\cup G_1\cup G_2$, where $G_1$ is a general union of $\min \{a_1,a\}$ schemes of type $(n,1)$, $G_2=\emptyset$ if $a\le a_1$ and $G_2$ is a general union of $a-a_1$ schemes of type $(1,n)$. We get $h^1(D,\Ii _{E\cap D}\otimes M_1\otimes
R^{\otimes t-z}) =0$ and obviously $h^1(\Ii _{W\cup E'}\otimes M\otimes L^{\otimes t-z-1})=0$.

\quad (b1.1.1.2) Now assume $(2n+1)w \ge
A(t-1-z)-\deg (B)$. Assume for the moment $a\ge 2e_{z+1}$. We specialize $Z_{a,w}\cup B$ to $E'':= Z_{a-2e_z,w-x_{z
+1}}\cup G_3\cup G_4$ with $G_3$ a general union of $b_{z+1}$  schemes of type $(2n-1,2)$ with respect to $D$ and $G_4$ a general union of
$2e_{z+1}$ schemes of type $(1,n)$ with respect to $D$. By the inductive assumption on $D$ we have $h^i(D,\Ii _{E''}\otimes
M_1\otimes R^{\otimes (t-z)})=0$, $i=1,2$, while for $\Res _D(E'')$ we always have at most $2n-3$ tangential schemes. We
degenerate $\Res _D(E'')$ and after taking $\Res _D$ of the degenerated subscheme we go to a case $Z_{a',0}$.

If $a< 2e_z$ we specialize $Z_{a,w}\cup B$ to $F:= Z_{a-2e_z,w-x_{z+1}}\cup G_3\cup G_4$ with $G_3$ a general union of $b_{z+1}$  schemes of type $(2n-1,2)$ with respect to $D$ and $G_4$ a general union of
$2e_{z+1}$ schemes of type $(1,n)$ with respect to $D$. We only have $h^1(D,\Ii _{D\cap F}\otimes M_1\otimes R^{\otimes
(t-z-1)}) =0$ (the $h^0$ is often $\ne 0$, say $h^0=\epsilon$), but the next steps are even easier because at the next step we may lose $2n-2$ at each step and hence we continue
without using differential Horace (no scheme of type $(2,2n-1)$ or $(1,n)$) until we arrive at a scheme $Z_{0,0}=\emptyset$ before using $L^{\otimes (k_3-1)}$.

\quad (b2) We need to check the uses of Lemma \ref{a3} and that the construction stops in a winning position before we use
$L^{\otimes k_3-1}$. We stress again that all steps not using Lemma
\ref{a3} either are in a case in which the degree of the scheme, $G$, involved for $L^{\otimes x}$ is such that $\deg (G\cap
D) = A(x)$ (and we have $h^i(D,\Ii _{F\cap D}\otimes M_1\otimes R^{\otimes x})=0$, $i=0,1$) and $\deg (G) - B(x) =\deg (\Res
_D(G))-B(x-1)$ or we had a configuration, $A$, with not enough $2$-points or tangential schemes to get such an equality and
we specialize to a configuration $A'$ with only $h^1(D,\Ii _{A'}\otimes M_1\otimes R^{\otimes x})=0$, but $\Res _D(A')$ is so
small that obviously $h^1(\Ii _{W\cup \Res _D(A')}\otimes M\otimes L^{\otimes (x-1)}) =0$. Consider any use of Lemma \ref{a3}
 that requires to check the inequality 
$h^0(\Ii _{W\cup F_x}\otimes M\otimes L^{\otimes x}) \le \max \{0, B(x+1) -\deg (W\cup F_x) -\deg (G_x)\}$ with $G_x \subset D$,
and $G_x$ a general union of points and tangent vectors of
$D$. At the beginning we observed that after the very first step we are in the range in which the degrees of the
zero-dimensional schemes, $\eta$, are at most the $h^0$ of the line bundle $\Ll$ we are using and so we need to prove
$h^1(\Ii _\eta \otimes \Ll))=0$, except for the use of Lemma \ref{a3} for which we need a weaker statement: the condition
$h^0\le \max \{0,\ast \}$ is equivalent to $h^1 \le \max \{0,\ast \ast \}$ for a suitable integer $\ast \ast$. First consider the
first
$z+1$ steps. Here at least
$b_1+\cdots +b_z+e_1+\cdots +b_z \ge (B(t) - B(t-z-1))/(2n-1)$ tangential schemes are fully used, i.e. we either used their intersection
with
$D$ and then also their residual with respect to $D$, or first we used the tangential scheme of
$D$ (its degree is $2n-1$) and then the residual (a tangent vector of $D$) is omitted quoting Lemma \ref{a3}. In the first
$z$ steps (the ones involving
$(b_i,e_i)$) we omit (quoting Lemma \ref{a3}) $b_1+\cdots +b_z$ tangent vectors. Suppose that when using $L^{\otimes (x+1)}$ we
have a zero-dimensional scheme
$F_{x+1}\cup G_{x+1}\cup A_{x+1}$  with $F_{x+1}\cap D =\emptyset$, $\deg (G_{x+1})$ very small, say $\deg (G_{x+1}) \le
(2n-2)(2n+1)$ and $A_{x+1}\subset D$ with $A_{x+1}$ general union of points and tangent vectors of $D$. Lemma \ref{a3} say that
to prove
$h^1(\Ii _{W\cup F_{x+1}\cup G_{x+1}\cup A_{x+1}}\otimes M\otimes L^{\otimes (x+1)})=0$ it is sufficient to prove
that $h^1(\Ii _{W\cup F_{x+1}\cup G_{x+1}}\otimes M\otimes L^{\otimes (x+1)})=0$ and $h^0(\Ii _{W\cup F_{x+1}\cup \Res
_D(G_{x+1})}\otimes M\otimes L^{\otimes x}) \le \max \{0,B(x+1) -\deg (W\cup F_{x+1}\cup G_{x+1}\cup
A_{x+1})\}$. Since $\deg (G_{x+1}\cup A_{x+1}) \ll A(t+1) = B(t+1)-B(t)$, the proof of the upper bound for $h^0$ (i.e. the
upper bound for $h^1$) is translated in a similar question, but for $L^{\otimes x}$ instead of $L^{\otimes (x+1)}$. We use the
same method for $L^{\otimes x}$ for $F_{x+1}\cup \Res
_D(G_{x+1})$. If the proof that $h^1(\Ii _{W\cup F_{x+1}\cup G_{x+1}}\otimes M\otimes L^{\otimes (x+1)})=0$ stops after at
most $\tau$ steps to $W\cup \emptyset$ or to $W\cup Z_{a',0}$ (i.e. for $L^{\otimes (x+1-\tau)}$), the proof for the upper
bound of $h^1$ stops after at most $\tau$ steps, i.e. we use $L^{\otimes (t+1-\tau)}$ but not $L^{\otimes (x-\tau)}$. Call $U(i)$ the scheme such that we need to
prove $h^1(\Ii _{W\cup U(i)}\otimes M\otimes L^{\otimes i})=0$ and set $\tau (i):= B(i) -\deg (W) -\deg (U(i))$. We started
with $U(t):= Z_{a,b}$ and we have $\tau (t) \ge 0$. Going down in the first $z+1$ steps we delete (by Lemma \ref{a3}) at
least $b_1+\dots +b_z$ tangent vectors and hence after at most $z+1$ steps, we get $\tau (z+1) \ge 2(b_1+\cdots +b_z)$.
When  for $L^{\otimes i}$ we use $d$ schemes of type $(n,1)$ with respect to $D$, then they give a contribution of $d$ to the
integer $\tau (i+2)-\tau (i)$. To conclude for $L^{\otimes i}$ it is sufficient to have $U(i)=\emptyset$ and so it is
sufficient to have $\tau (i) \ge B(i)-\deg(W)$. Since $(2n+1)/2 \le n+1$, if for some $L^{\otimes i}$, $i \le t-z-2$, we have not yet
won, we have $\tau (i) \ge B(t)/2n$ and so $\tau (i) \ge B(i)$. We arrive at such an integer $i$ before $k_3-1$, because $t\ge
k_0$ and $B(k_0)\ge 5nB(k_3)$.
\end{proof}

\end{document}